\documentclass[11pt, a4paper]{amsart}
\usepackage{amscd,amssymb,eucal,eufrak,mathrsfs,amsmath}
\usepackage{hyperref} 

\input xy
\xyoption{all}
\usepackage{epsfig}

\oddsidemargin 0.1875 in \evensidemargin 0.1875in
\textwidth 6in 
\textheight 230mm \voffset=-4mm


\newtheorem{thm}{Theorem}[section]
\newtheorem{cor}[thm]{Corollary}
\newtheorem{lem}[thm]{Lemma}
\newtheorem{prop}[thm]{Proposition}

\newtheorem{question}[thm]{Question}     
  \newtheorem{f}[thm]{Fact}                              
\theoremstyle{definition}
\newtheorem{defin}[thm]{Definition}

\theoremstyle{remark}
\newtheorem{remark}[thm]{Remark}
\newtheorem{remarks}[thm]{Remarks}
\newtheorem{example}[thm]{Example}
\newtheorem{examples}[thm]{Examples}
\numberwithin{equation}{section}
\newtheorem{problem}[thm]{Problem}

\newtheorem{fact}[thm]{Fact}

                                                                      
\newcommand{\delete}[1]{} 
\newcommand{\nt}{\noindent}

\def\eps{{\varepsilon}}

\def\a{\alpha}

\def\eps{{\varepsilon}}

\newcommand{\g}{\gamma}

\newcommand{\dl}{\delta} 
\def\rem{ \overline{ \delta }}
\def\~dl{\overline{ \delta }}
\def\cl{\mathrm{cl}}

\def\To{\Longrightarrow}

\newcommand{\sk}{\vskip 0.2cm}

\newcommand{\ben}{\begin{enumerate}}

\newcommand{\een}{\end{enumerate}}
\newcommand{\bit}{\begin{itemize}}

\newcommand{\eit}{\end{itemize}}

\def\R {{\mathbb R}}
\def\N {{\mathbb N}}
\def\Z {{\mathbb Z}}
\def\Q {{\mathbb Q}}

\def\U{\mathcal{U}} 

\def\T {{\mathbb T}}


\def\obr{^{-1}}

\def\RUC{{\hbox{RUC}_G}}

\def\Iso{{\mathrm{Iso}}\,}
\def\Aut{{\mathrm Aut}\,}

\def\Homeo{{\mathrm{Homeo}}\,}

\newcommand{\lan}{\langle}
\newcommand{\ran}{\rangle}

\def\wrt{with respect to }

\def\QED{\nobreak\quad\ifmmode\roman{Q.E.D.}\else{\rm Q.E.D.}\fi}


\begin{document}

\title[]{Maximal equivariant compactifications} 


\dedicatory{Dedicated to the centennial of Yu.M. Smirnov's birth}

\author[]{Michael Megrelishvili}
\address{Department of Mathematics,
	Bar-Ilan University, 52900 Ramat-Gan, Israel}
\email{megereli@math.biu.ac.il}
\urladdr{http://www.math.biu.ac.il/$^\sim$megereli}


\date{2022, January}  

\keywords{equivariant compactification, Gurarij sphere, linearly ordered space, proximity space, Thompson's group, uniform space, Urysohn sphere}

\thanks{{\it 2020 AMS classification:} 54H15, 54D35, 54F05} 
\thanks{This research was supported by a grant of the Israel Science Foundation (ISF 1194/19) 
 and also by the Gelbart Research Institute at the Department of Mathematics, Bar-Ilan  University}

\begin{abstract}  
Let $G$ be a locally compact group. Then for every $G$-space $X$ the maximal $G$-proximity $\beta_G$ can be characterized by the maximal topological proximity $\beta$ as follows:
$$
A \ \overline{\beta_G} \ B \Leftrightarrow \exists V \in N_e \ \ \ VA \ \overline{\beta} \ VB.
$$
Here, $\beta_G \colon X \to \beta_G X$ is the maximal $G$-compactification of $X$ (which is an embedding for locally compact $G$ by a result of J. de Vries), $V$ is a neighborhood of $e$ and $A \ \overline{\beta_G} \ B$ means that the closures of $A$ and $B$ do not meet in $\beta_G X$.  

Note that the local compactness of $G$ is essential. 
This theorem comes as a corollary of a general result about maximal $\mathcal{U}$-uniform $G$-compactifications for a useful wide class of uniform structures $\mathcal{U}$ on $G$-spaces for not necessarily locally compact groups $G$. 
It helps, in particular, to derive the following result. 
Let $(\mathbb{U}_1,d)$ be the Urysohn sphere and  $G=\Iso(\mathbb{U}_1,d)$ is its isometry group with the pointwise topology. Then for every pair of subsets $A,B$ in $\mathbb{U}_1$, we have 
$$
A \ \overline{\beta_G} \ B \Leftrightarrow \exists V \in N_e  \ \ \  
d(VA,VB) > 0.
$$  
More generally, the same is true for any  $\aleph_0$-categorical metric $G$-structure $(M,d)$, where $G:=\Aut(M)$ is its automorphism group. 
\end{abstract} 

\maketitle  
\setcounter{tocdepth}{1}
 \tableofcontents

\section{Introduction}

A \emph{topological
	transformation group} ($G$-\emph{space}) is a continuous action of a topological group
$G$ on a topological space $X$.  
 Compactifiability of Tychonoff topological spaces 
means the existence of topological embeddings into compact
Hausdorff spaces. For the compactifiability of $G$-spaces we require, in
addition, the continuous extendability of the original action.
Compactifiable $G$-spaces are known also as $G$-\emph{Tychonoff} \emph{spaces}.  

Compactifications of $G$-spaces is a quite an active research field. 
We do not intend here to give 
a comprehensive bibliography but try to refer the interested readers to some publications, where 
$G$-compactifications play a major role. See, for example, R. Brook \cite{Br}, 
J.~de Vries \cite{Vr-can75,Vr-book75,Vr-Embed77,Vr-loccom78,Vr-LinComp82,Vr-Revis00}, 
Yu.M. Smirnov \cite{Smirnov76,Smirnov77,Smirnov82,Smirnov-geom94},  Antonyan--Smirnov \cite{AS}, Smirnov--Stoyanov \cite{SmirnStoy}, L. Stoyanov \cite{Sto1,Sto2}, M. Megrelishvili \cite{Me-EqNorm83,Me-EqComp84,Me-diss85,Me-Ex88,Me-sing89,Me-ec94,Me-opit07,Me-b}, 
 Dikranjan-Prodanov-Stoyanov \cite{DPS}, Megrelishvili--Scarr \cite{MeSc98}, 
V. Uspenskij \cite{Usp-Dugundji}, S. Antonyan \cite{Ant-emb}, Gonzalez--Sanchis \cite{GonzSan06}, V. Pestov  \cite{Pe-nbook,Pest-Smirnov}, J. van Mill \cite{Mill09}, A. Sokolovskaya \cite{Sok}, Google--Megrelishvili \cite{GM-prox10},  Kozlov--Chatyrko \cite{Koz-Chat10}, 
 N. Antonyan, S. Antonyan and M. Sanchis \cite{AAS}, K. Kozlov \cite{Koz-Rectang12, Koz-spectr13,Koz-homog13,Koz-eq22}, N. Antonyan \cite{NAnt17}, Karasev--Kozlov \cite{Kar-Koz}, Ibarlucia--Megrelishvili \cite{IbMe20}   
(and many additional references in these publications). 

\sk 
Compactifications of a Tychonoff space $X$ can be described in
several ways: 

\sk 
\bit
\item  Banach subalgebras of $C^b(X)$
(Gelfand-Kolmogoroff 1-1 correspondence));
\item
Completion of totally bounded 
uniformities on $X$ (Samuel compactifications);
\item
Proximities on $X$ (Smirnov compactifications).
\eit

It is well known (see for example \cite{Br,AS,Vr-Embed77,Vr-loccom78,Me-EqComp84})
that the first two correspondences admit dynamical generalizations
in the category of $G$-spaces.
Instead of continuous bounded functions, we should use special
subalgebras of generalized right uniformly continuous functions (in other terminology, \textit{$\pi$-uniform functions})  
and instead of precompact uniformities, we need
now precompact \emph{equiuniformities} (Definition
\ref{d:qb}).
 
For every Tychonoff $G$-space $X$, the algebra $\RUC(X)$ of all right uniformly continuous bounded functions on $X$ induces the corresponding Gelfand (maximal ideal) space $\beta_G X  \subset \RUC(X)^*$ and the \textit{maximal $G$-compactification} 
$$\beta_G \colon X \to  \beta_G X.$$

For locally compact groups $G$, all Tychonoff $G$-spaces admit proper compactifications, as was established by de Vries \cite{Vr-loccom78}. So, in this case, the map $\beta_G$ is a topological embedding. However, in general it is not true. Resolving a question of de Vries \cite{Vr-can75}, we proved in \cite{Me-Ex88} that there exist noncompactifiable $G$-spaces (even for Polish group actions on Polish spaces). 

Moreover, 
answering an old problem due to Smirnov, an extreme example was found by V. Pestov \cite{Pest-Smirnov} by constructing a countable metrizable group $G$ and a countable metrizable non-trivial $G$-space $X$ for which every equivariant compactification is a singleton.

\sk 
One of the most general (and widely open) attractive problems is 
\begin{problem} \label{pr:maximal} 
	Clarify the structure of maximal $G$-compactifications $\beta_G X$ of remarkable naturally defined $G$-spaces $X$.
\end{problem} 
First of all, note that  $\beta_G X$ (for nondiscrete $G$) usually is essentially ``smaller" than $\beta X$. For instance, let $G$ be a metrizable topological group which is not precompact. Then the canonical action $G \times \beta G \to \beta G$ is continuous iff $G$ is discrete (Proposition \ref{ex:beta}). 


\begin{problem} \cite[Question 1.3(b)]{IbMe20} 
	Study the greatest $G$-compactification $\beta_G \colon X \to \beta_G X$ of (natural) Polish $G$-spaces $X$. In particular: when is $\beta_G X$ metrizable?
\end{problem}

Recall that the \^{C}ech–Stone compactification $\beta X$ of any metrizable non-compact space $X$ cannot be metrizable. In contrast, 
for several naturally defined ``massive actions", $\beta_G X$ might be  metrizable;  sometimes even having a nice transparent geometric presentation. Perhaps the first 
example of this kind was a beautiful result of L. Stoyanov \cite{Sto1, Sto2}. He established that the greatest $U(H)$-compactification of the unit sphere $S_H$ in every infinite dimensional Hilbert space $H$ is the weakly compact unit ball, where $G=U(H)$ is the unitary group of $H$ in its standard strong operator topology. 

One of the important sufficient conditions when a $G$-space $X$ is $G$-compactifiable 
is the existence of a $G$-invariant metric on $X$. This was proved first by Ludescher--de Vries \cite{Lud-Vr80}.   
Another possibility to establish that such $(X,d)$ is $G$-Tychonoff is to observe  that in this case \textit{Gromov compactification} $\g \colon (X,d) \to \g(X)$ is a $G$-compactification which is a $d$-\textit{uniform} topological embedding; see the explanation in \cite{Me-opit07} using the RUC property of the distance functions $x \mapsto d(x,\cdot)$ (one may assume that $d$ is bounded). 

Pestov raised several questions in \cite{Pest-Smirnov} about a possible coincidence between the maximal $G$-compactification and the Gromov compacfification for some natural geometrically defined isometric actions (Urysohn sphere and Gurarij sphere, among others). 
These problems were studied recently in 
\cite{IbMe20} (with a positive answer in the case of the Urysohn sphere and a  negative answer for the Gurarij sphere). 






\sk 
\begin{remark} \label{r:smallGcomp} 
We collect here some old and new nontrivial concrete examples when $\beta_G X$ is metrizable, usually admitting also a geometric realization.  
\begin{enumerate} 
	\item (L. Stoyanov \cite{Sto1,Sto2,DPS}) Let $X:=S_H$ be the unit sphere of the infinite-dimensional separable Hilbert space $H$ with the unitary group $G:=U(H)$. Then $\beta_G X$ is the weak compact unit ball $(B_H,w)$ of $H$. 
	\item \cite{IbMe20} \textit{Urysohn sphere} $(\mathbb{U}_1,d)$ with its isometry group 
	$G=\Iso (\mathbb{U}_1)$. Then $\beta_G \mathbb{U}_1$ can be identified with its Gromov compactification. 
	Moreover, $\beta_G \mathbb{U}_1$ can be identified with the compact space  $K(\mathbb{U}_1)$ of all Katetov functions on $X$. 
	\footnote{K. Kozlov proved in \cite{Koz-eq22} that $\beta_G \mathbb{U}_1$ is homeomorphic to the Hilbert cube.}  
	\item \cite[Theorem 4.11]{IbMe20} The maximal $G$-compactification of the unit sphere $S_{\mathfrak{G}}$ in the Gurarij Banach space $\mathfrak{G}$ (where $G$ is the linear isometry group) is metrizable and does not coincide with its Gromov compactification. $\beta_G (S_{\mathfrak{G}})$ can be identified with the compact space  $K^1_C(\mathfrak{G})$ of all normalized Katetov convex functions on $\mathfrak{G}$. These results are strongly related to some properties of the  Gurarij space studied by I. Ben Yaacov \cite{BenYa} and Ben Yaacov--Henson \cite{BenYaHenson}. 
	\item 
	(Proved in \cite[Theorem 4.14]{IbMe20} thanks to an observation of Ben Yaacov) 
	 Let $B_p$ be the unit ball of the classical Banach space 
	$V_p:=L_p[0, 1]$ for $1 \leq p < \infty$, $p  \notin 2\N$. Then for the linear isometry group $\Iso_l(V_p)$, the maximal 
	$G$-compactification $\beta_G B_p$ is the Gromov compactification of the metric space $B_p$. 
	\item \cite[Theorem 4.4]{IbMe20} For every $\aleph_0$-categorical metric structure $(M,d)$,  
	the maximal $G$-compactification of $(M,d)$, with $G:=\Aut(M)$,
	can be identified with the space $S_1(M)$ of all 1-types over $M$ (and, in particular, is metrizable).
	\item (see Examples \ref{ex:massive} below) Let $X=(\Q,\leq)$ be the rationals with the usual order but equipped with the discrete topology. Consider any dense subgroup $G$ of the automorphism group $\Aut(\Q,\leq)$ with the pointwise topology (for instance, Thompson's group $F$). In this case 
	$\beta_G X$ is a metrizable linearly ordered compact $G$-space, the actions and $\beta_G \colon X \to \beta_G$ are order preserving, where $\beta_G X$ is an inverse limit of finite linearly ordered 
	spaces $\Q/St_F$, where $F \subset \Q$ is finite, $St_F$ is the stabilizer subgroup and $\Q/St_F$ is the orbit space.  
\end{enumerate}
\end{remark}

\sk 
Whenever $\a \colon X \to Y$ is a compactification, one of the natural questions is which subsets $A, B$ of $X$ are ``far" with respect to $\a$. 
This means that the closures of their images do not meet in $Y$. 
This is the most basic idea of classical \textit{proximity spaces}.  
See  Section \ref{s:Sm} for a description of the role of proximities and Smirnov's Theorem. 
This theorem shows that for every $G$-compactification $\a \colon X \to Y$ and $\a$-far subsets $A,B$ of $X$, there exists a sufficiently small neighborhood $V \in N_e$ of the identity in $G$ such that $VA,VB$ are also $\a$-far. 

A natural question arises about the converse direction:  when 
does this condition guarantee 
that we have a $G$-compactification? 
We show that this holds for proximities induced by a certain rich class of uniform structures on $G$-spaces (see Theorem \ref{t:G-proxOfQuasib}). 
This leads to  
one of the main results of this paper which is to describe 
maximal equivariant compactification of locally compact group actions (Theorem \ref{c:MaxGproximityLocCompG}). The local compactness of $G$ is necessary. Indeed, 
there exist a Polish $G$-compactifiable $G$-space $X$ with a Polish acting group $G$ and $G$-invariant closed $G$-subsets $A,B$ in $X$ such that $A \beta_G B$ (see Example \ref{ex:ex} and Remark \ref{r:ex}). 

\sk 
A more special general problem is 

\begin{problem} \label{pr:d} 
	For which metric $G$-spaces $(X,d)$ 
	is the following condition satisfied for every subsets $A,B$ in $X$
	  $$
	  A \ \overline{\beta_G} \ B \Leftrightarrow \exists V \in N_e  \ \ \  
	  d(VA,VB) > 0.
	  $$ 
\end{problem}

\sk 
Using a result from \cite{IbMe20},  
we positively answer Problem \ref{pr:d} for an important class of metric $G$-spaces. Namely, for  
$\aleph_0$-\textit{categorical metric structures} $(M,d)$, where $G:=\Aut(M,d)$ is  its automorphism group. In particular, this is true for the Urysohn sphere $\mathbb{U}_1$ (Theorem \ref{t:Urys}).

\sk 
\nt \textbf{Acknowledgment:} It is a great honor for me to say that Yu.M. Smirnov led me to the world of equivariant topology. I am grateful to T. Ibarlucia and V. Pestov for their influence and inspiration. I thank the organizers of the Conference Smirnov-100 for their work to provide such an important conference. 

\sk 
\section{Proximities and equivariant Smirnov's Theorem} 
\label{s:Sm} 

\subsection*{Proximities and proximity spaces.}
\label{sb:prox}

In 1908, F. Riesz  first formulated a set of axioms to
describe the notion of closeness of pair of sets.
The most useful version of proximity was introduced and studied by
V.A. Efremovich \cite{Efrem}.  
We follow the setting of \cite{NW}. 


\begin{defin}\label{df:prx} 
	Let $X$ be a nonempty set and $\delta$ be a relation in the set of all
	its subsets. We write $A \delta B$ if $A$ and $B$ are
	$ \delta$-related and
	$A \rem B$ if not. The relation $\dl$
	will be called a \emph{proximity} on $X$ provided that the
	following conditions are satisfied:
	\bit
	\item [(P1)]
	$A \cap B \neq \emptyset$ implies $A \delta B$.
	
	\item [(P2)]
	$A \delta B$ implies $B \delta A$;
	
	\item [(P3)]
	$A \delta B$ implies $A \neq \emptyset$;
	\item [(P4)]
	$A \delta (B\cup C)$ iff $A \delta B$ or $A \delta C$;
	
	\item [(P5)]
	If $A \rem B$ then there exist $C \subset X$ such that
	$A \rem C$ and $(X \backslash C) \rem B$.
	\eit 
\end{defin}

A pair $(X, \delta)$ is called a \emph{proximity space}. 
Two sets
$A,B \subset
X$ are \emph{near} (or \emph{proximal}) in $(X, \delta)$ if $A \delta B$ and \emph{far} 
 (or, \emph{remote}) if $A \~dl B$.  
 We say a subset $A \subset X$ is \emph{strongly contained} in $B \subset X$ \wrt $\dl$ (or, $B$ is a $\delta$-\textit{neighborhood} of $A$) 
 if $A \~dl (X\setminus B)$ and write: $A \Subset B.$ 
 In Definition \ref{df:prx} one can replace (P5) by the
 following axiom:
 
 \begin{itemize}
 	\item [(P5$'$)]  
 	If $A \~dl B$ then there exist subsets $A_1$ and $B_1$ of $X$ such that $A
 	\Subset A_1, \ B\Subset B_1$ and $ A_1 \cap B_1 = \emptyset.$
 \end{itemize}
  
 \sk 
Every proximity space $(X,\dl)$ induces a topology
$\tau:=top(\dl)$ on $X$ by the closure operator:
$$
\cl_{\dl}(A):=\{x \in X: \ x\dl A\}.
$$
The topology $top(\dl)$ is Hausdorff iff the following condition
satisfied: 

\sk

\begin{itemize}
	\item [(P6)]  
	If $x, y \in X$ and $x \dl y$ then $x=y$. 
\end{itemize}

\sk

Every (separated) proximity space $(X,\dl)$ is completely regular
(resp., Tychonoff)
\wrt the topology $top(\dl)$.
A proximity $\dl$ of $X$
is called \emph{continuous} (or, more precisely, a 
$\tau$-\emph{continuous proximity}) if $top(\dl) \subset \tau$.
In the case of $top(\dl) = \tau,$
we say that $\dl$
is a \emph{compatible} proximity on the topological space $(X,\tau).$

%

Like compactifications, the family of all proximities on
$X$ admits  a natural partial order. 
   A proximity $\dl_{1}$ \emph{dominates} $\dl_{2}$ (and write
    $\dl_{2}  \preceq \dl_{1}$) iff for every $A \dl_1 B$
    we have $A \dl_2 B.$

%
%

\begin{example} \label{e:max0} \ 
	\ben 
	\item
	Let $Y$ be a compact Hausdorff space. Then there exists a unique compatible proximity
	on the space $Y$ defined by
	$$
	A \dl B \Leftrightarrow cl(A) \cap cl(B) \neq
	\emptyset.
	$$
	
	\item
	Let $X$ be a Tychonoff space. The relation $\dl_{\beta}$
	defined by
	$$
	A \beta B \Leftrightarrow
	\nexists f \in C(X) \text { such that } f(A)=0
	\text { and } f(B)=1
	$$
	is a proximity which
	corresponds to the greatest compatible uniformity on $X$. The proximity $\beta$ comes from the \^{C}ech-Stone compactification $\beta \colon X \to \beta X$.
	\item A Hausdorff
	topological space $X$ is normal iff the relation
	$$
	A \delta_n B \quad \text{iff} \quad
	cl(A) \cap cl(B) \neq \emptyset
	$$
	defines a proximity relation on the set $X$. Then $A \delta_n B \Leftrightarrow A \beta B$. 
	\een
\end{example}

\subsection*{Smirnov's Theorems}

Let $\a\colon X \to Y$ be a compactification. Denote by $\dl_{\a}$ the
corresponding initial proximity on $X$ defined via the canonical proximity $\dl_Y$ of $Y$. More precisely, for subsets
$A,B$ of $X$ we define $A \bar{\dl_{\a}} B$ if $\a(A) \ 
\overline{\dl}_Y  \ \a (B)$, i.e., if
$cl(\alpha(A)) \cap cl(\alpha(B))=\emptyset$.

Conversely every continuous proximity $\dl$ on a topological space
induces a totally bounded uniformity $\mathcal{U}_{\dl}$.  
Now the completion gives \textit{Smirnov's compactification} $c_{\dl} \colon (X,\dl) \to c_{\dl} X$. It is equivalent to the Samuel
compactification \wrt the uniformity $\mathcal{U}_{\dl}$. This leads
to a description of compactifications in terms of proximities (see, for example,  \cite{Smirnov52, NW, Eng}).

\begin{f} \label{th:Sm.thm}
	(\texttt{{Smirnov's classical theorem}})
	Let $X$ be a topological space. 
	Assigning to any compactification $\alpha \colon X \to Y$
	the proximity $\dl_{\alpha}$ on $X$ gives rise to a
	natural one-to-one order preserving correspondence
	between all compactifications of $X$
	and all continuous proximities on the space $X.$
\end{f}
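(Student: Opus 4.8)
The plan is to display both directions of the correspondence and show they are mutually inverse, broadly following the cluster approach of \cite{NW}.

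First I would check that the forward assignment $\alpha \mapsto \dl_\alpha$ takes values in continuous proximities. Since $Y$ is compact Hausdorff it carries the unique compatible proximity of Example \ref{e:max0}(1), and $\dl_\alpha$ is merely its pullback along $\alpha$: $A\, \dl_\alpha\, B \Leftrightarrow \cl_Y(\alpha A)\cap \cl_Y(\alpha B)\neq\emptyset$. Axioms (P1)--(P4) and (P6) are immediate from the closure definition, while (P5) is exactly the normality of the compact Hausdorff space $Y$ transported back to $X$ through the embedding. Because $\alpha$ is a topological embedding with dense image, points are closed in $Y$ and one gets $\cl_{\dl_\alpha}(A)=\alpha\obr(\cl_Y\alpha(A))=\cl_X(A)$; hence $top(\dl_\alpha)=\tau$ on $X$ and $\dl_\alpha$ is compatible.

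For the reverse direction I would take a continuous proximity $\dl$ and realize its Smirnov compactification $c_\dl\colon X\to c_\dl X$ as the space of all \emph{clusters} of $(X,\dl)$, equivalently as the Samuel compactification of the (unique) totally bounded uniformity $\mathcal U_\dl$ inducing $\dl$. Here a cluster is a family $\sigma$ of subsets that are pairwise $\dl$-near, maximal with respect to being near every member, and closed under the (P4)-alternative. The point-clusters $\sigma_x=\{A\subset X: x\in \cl_\dl A\}$ embed $X$ densely, the proximity extends canonically to the cluster space, and the latter becomes compact Hausdorff. The central claim to prove here is the \emph{recovery lemma} $\dl_{c_\dl}=\dl$: the canonical proximity of $c_\dl X$ restricts on $X$ to the original $\dl$.

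Granting the recovery lemma, one composite of the two assignments is the identity by construction. For the other composite I would verify that $c_{\dl_\alpha}$ is equivalent to $\alpha$ as a compactification: the totally bounded uniformity that the compact space $Y$ induces on $X$ coincides, by the Efremovich bijection between proximities and totally bounded uniformities, with $\mathcal U_{\dl_\alpha}$; since $X$ is dense in the complete compact space $Y$, the completion $c_{\dl_\alpha}X$ is canonically $Y$, and uniqueness of completions yields $c_{\dl_\alpha}\cong\alpha$. Together these give the bijection. Order preservation is then routine: $\dl_2\preceq\dl_1$ says that every $\dl_2$-far pair is $\dl_1$-far, which by the universal property of the Samuel completion is equivalent to the existence of a (unique) morphism of compactifications $c_{\dl_1}X\to c_{\dl_2}X$ over $X$, i.e. $c_{\dl_2}\le c_{\dl_1}$.

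The main obstacle is the recovery lemma $\dl_{c_\dl}=\dl$, and inside it the implication that $A\,\rem\,B$ forces the closures of $A$ and $B$ to be \emph{disjoint} in $c_\dl X$. This is exactly where axiom (P5), in its strong-inclusion form (P5$'$), is indispensable: from $A\,\rem\,B$ it produces $A\Subset A_1$ and $B\Subset B_1$ with $A_1\cap B_1=\emptyset$, and one uses these interpolating sets to show that no cluster can contain both $A$ and $B$, so the limit clusters of $A$ and those of $B$ are separated. The converse implication (near sets have meeting closures) is easier, following from maximality of clusters together with (P4). Checking that the cluster space is genuinely compact Hausdorff and that $x\mapsto\sigma_x$ is an embedding inducing the correct topology is then bookkeeping resting on the same axioms.
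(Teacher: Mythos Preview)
The paper does not prove this statement: it is stated as a \texttt{Fact} (Smirnov's classical theorem) with references to \cite{Smirnov52, NW, Eng}, and no argument is given. Your outline is correct and is essentially the standard cluster/end construction found in those very references (Naimpally--Warrack in particular), so there is nothing to compare against.

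One small point worth tightening: the statement is phrased for a general topological space $X$ and for \emph{continuous} proximities (i.e., $top(\dl)\subset\tau$), whereas your argument implicitly assumes $X$ is Tychonoff and that $\alpha$ is a topological embedding, yielding \emph{compatible} proximities ($top(\dl)=\tau$). This matches the paper's standing convention that all compactifications are proper, so it is fine in context; but if you want the statement at face value you would need the slightly more general version in which $\alpha$ is only a continuous map with dense image and the induced proximity is merely continuous.
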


In the
case of $G$-spaces, it was initiated by Smirnov
himself, extending in \cite{AS} his old classical purely topological results from Tychonoff spaces to the case of group actions.

For proximities of $G$-compactifications we simply say $G$-\textit{proximity}.
\begin{f} \label{th:Sm.thmG} 
	(\texttt{{Smirnov's theorem for group actions}})
	In Smirnov's bijection (Fact \ref{th:Sm.thm}), $G$-proxmities are exactly proximities $\dl$ which satisfy the following two conditions:
	\begin{enumerate}
		\item ($G$-\emph{invariant}) \ $gA \ \dl \ gB$ for every $A \ \dl \ B$ and $g \in G$;
		\item (compatible with the action)	if $A \rem B$ then there exists $U \in N_e$ such that $UA \cap UB =\emptyset$.
	\end{enumerate} 
\end{f}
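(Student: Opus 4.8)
The plan is to verify both directions of the claimed characterization within Smirnov's bijection (Fact \ref{th:Sm.thm}), exploiting in each direction the interplay between the proximity $\dl_\alpha$ and the unique uniformity of the compact space $Y$. Throughout I identify a compactification with a dense embedding $\alpha\colon X\to Y$, so that $\alpha$ is injective and $A\ \rem_\alpha\ B$ means precisely that the compact sets $\cl_Y\alpha(A)$ and $\cl_Y\alpha(B)$ are disjoint.

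For necessity, assume the action extends to a continuous $G\times Y\to Y$ by homeomorphisms. Invariance (1) is immediate: each $g$ acts on $Y$ as a homeomorphism with $\alpha(gA)=g\alpha(A)$, so it carries $\cl_Y\alpha(A)\cap\cl_Y\alpha(B)$ onto $\cl_Y\alpha(gA)\cap\cl_Y\alpha(gB)$, preserving nonemptiness. For (2) the key auxiliary fact is that a continuous action on a compact space is bounded near the identity: for every entourage $\eps$ of the unique uniformity of $Y$ there is $U\in N_e$ with $(gy,y)\in\eps$ for all $y\in Y$ and $g\in U$. I would prove this from continuity of the action at each point $(e,y_0)$ together with a finite subcover of $Y$. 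Given a far pair $A\ \rem_\alpha\ B$, I then choose $\eps$ with $\eps[\cl_Y\alpha(A)]\cap\eps[\cl_Y\alpha(B)]=\emptyset$ and the corresponding $U$; since $\alpha(UA)\subset\eps[\cl_Y\alpha(A)]$ and $\alpha(UB)\subset\eps[\cl_Y\alpha(B)]$ are disjoint, injectivity of $\alpha$ yields $UA\cap UB=\emptyset$.

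For sufficiency, let $\dl$ be a continuous proximity satisfying (1) and (2), with Smirnov compactification $\alpha\colon X\to Y=c_\dl X$. Invariance (1) says each $g$ is a proximity isomorphism of $(X,\dl)$, hence, by functoriality of the Smirnov construction, extends to a homeomorphism $\tilde g$ of $Y$, and $g\mapsto\tilde g$ is an action of $G$ on $Y$ by homeomorphisms. The whole difficulty, which I expect to be the main obstacle, is to promote this to a jointly continuous action, and this is exactly where (2) enters. The clean route is to show that the totally bounded uniformity $\mathcal U_\dl$ is bounded, i.e. for every $\delta\in\mathcal U_\dl$ there is $U\in N_e$ with $(x,gx)\in\delta$ for all $x\in X$ and $g\in U$. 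To do so I would pick a symmetric $\eps$ with $\eps\circ\eps\circ\eps\subset\delta$ and, using total boundedness, a finite cover $X=\bigcup_{i=1}^n S_i$ by $\eps$-small sets. Setting $A_i=S_i$ and $B_i=X\setminus(\eps\circ\eps)[S_i]$, one checks that $A_i\ \rem\ B_i$; then (2) provides $U_i\in N_e$ with $U_iA_i\cap U_iB_i=\emptyset$, and $U:=\bigcap_{i=1}^n U_i$ works: for $x\in X$ choose $i$ with $x\in S_i$, so $gx\in U_iA_i\subset(\eps\circ\eps)[S_i]$, and since $x$ and $S_i$ are $\eps$-close this gives $(x,gx)\in\eps\circ\eps\circ\eps\subset\delta$.

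Finally, boundedness on the dense subspace $X$ extends by continuity to the completion, so that $(y,\tilde g y)$ lies in the completed entourage for all $y\in Y$ and $g\in U$. Joint continuity then follows at each $(e,y_0)$ by the standard composition-of-entourages estimate (taking $\eps'$ with $\eps'\circ\eps'\subset\eps$, using $W=\eps'[y_0]$ and boundedness), and at an arbitrary $(g_0,y_0)$ by writing the action as $\tilde g_0$ composed with the action and the continuous translation $(g,y)\mapsto(g_0^{-1}g,y)$. This exhibits $Y$ as a $G$-compactification realizing $\dl$, which closes the bijection.
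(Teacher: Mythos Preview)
The paper does not supply a proof of this statement: it is recorded as a \emph{Fact} attributed to Antonyan--Smirnov \cite{AS}, and the only related argument in the paper is Remark~\ref{r:semigr}, which shows that condition~(2) may be strengthened to ``$VA\ \rem\ VB$ for some $V\in N_e$''. So there is no proof in the paper to compare against.

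Your argument is correct and complete. The necessity direction is straightforward, and your key observation --- that a continuous action on a compact space is automatically bounded --- is exactly the right ingredient. In the sufficiency direction your main step is to show directly, from~(2) and total boundedness of $\mathcal U_\dl$, that $\mathcal U_\dl$ is \emph{bounded}; together with the saturation coming from~(1) (each $g$ is a proximity, hence $\mathcal U_\dl$-uniform, isomorphism) this makes $\mathcal U_\dl$ an equiuniformity in the sense of Definition~\ref{d:qb}. This is precisely the paper's preferred framework (Remark~\ref{r:simple-qb}(1), Proposition~\ref{equiun->S-prox}, Fact~\ref{f:G-compl}), so your proof fits naturally into its language even though the paper does not spell it out. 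One small point worth making explicit: in the step ``$gx\in U_iA_i\subset(\eps\circ\eps)[S_i]$'' you are using $e\in U_i$, so that $B_i\subset U_iB_i$ and hence $U_iA_i\cap B_i=\emptyset$; this is correct but tacit.
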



\begin{remark} \label{r:semigr} 
	The compatability condition (2) can be replaced by the following (formally stronger) assumption: 
	\begin{itemize}
		\item [(2$^{str}$)] if $A \rem B$ then there exists $V \in N_e$ such that $VA \rem VB$.
	\end{itemize}
In order to see that (2) implies (2$^{str}$), apply the axiom (P5$'$) (from Definition \ref{df:prx}). Then for $A \rem B$ there exist subsets $A_1$ and $B_1$ of $X$ such that $A \Subset A_1, \ B\Subset B_1$ and $ A_1 \rem B_1$. 
By (2) there exists $V \in N_e$ such that $VA \subset A_1, VB \subset B_2$. Therefore, $VA \rem VB$. 

\sk
The same can be derived also by results of \cite[Section 5.2]{GM-prox10}, where a natural generalization of Smirnov's theorem (Fact \ref{th:Sm.thmG}) for \textit{semigroup actions} was obtained. 
\end{remark}

Note that the $G$-invariantness of $\dl_{\a}$ guarantees that the $G$-action on $X$ can be extended to a $G$-action on the compactification $Y$ such that all $g$-translations are continuous. That is, we have a continuous action $G_{discr} \times Y \to Y$, where $G_{discr}$ is the group $G$ with the discrete topology 
(however, see Fact \ref{f:suff}.6 and Theorem \ref{t:OrdComp} below).



\begin{remark}
	Let $X$ be a locally compact Hausdorff space. Then the
	following relation 
	$$
	A \~dl_a B \Leftrightarrow cl(A) \cap cl(B) = \emptyset \ \text{where either} \
	cl(A) \ \text{or} \ cl(B) \ \text{is compact}
	$$
	defines a compatible proximity on $X$ which
	suits the (one-point) Alexandrov compactification.
	If $X$ is a $G$-space then this is a $G$-proximity. This explains Fact \ref{f:suff}.2 below. 
\end{remark}

\begin{remark} \label{r:right} 
	Let $G/H$ be a coset $G$-space with respect to the left
	action $\pi \colon G \times G/H \to G/H$ and a closed subgroup $H$. 
	The relation
	$\dl_R$
	defined by
	$$
	A \dl_R B \Leftrightarrow
	\exists U \in N_e(G): \ UA \cap B \neq  \emptyset
	$$
	is a compatible proximity on $G/H$. 
	In fact, it is a $G$-proximity (see Fact \ref{f:suff}.1 below). 
\end{remark}

\sk 
\subsection*{Uniform spaces and the corresponding proximity}

Recall the following standard lemma about the basis of a 
uniform structure (defined by the entourages -- reflexive binary relations) in the sense of A. Weil.  

\begin{lem} \label{l:UnifBasis} \emph{(see, for example, \cite[Prop. 0.8]{RD})}  
	An abstract set $\mathcal{B}$ of entourages on $X$ is a basis of some uniformity $\U$ iff the following conditions are satisfied:
	\begin{itemize}
		\item [(1)] $\forall \eps \in \mathcal{B}$ \ $\Delta_X \subset \eps$; 
		\item [(2)] $\forall \eps \in \mathcal{B}$ \ $\exists \delta \in \mathcal{B}$ \ $\delta \subset \eps^{-1}$;
		\item [(3)] $\forall \eps, \delta \in \mathcal{B}$ \ $\exists \gamma \in \mathcal{B}$ \ $\gamma \subset \eps \cap \delta$ ($\mathcal{B}$ is a filterbase); 
		\item [(4)] $\forall \eps \in \mathcal{B}$ \ $\exists \delta \in \mathcal{B}$ \ $\delta \circ \delta \subset \eps$. 
	\end{itemize}	
\end{lem}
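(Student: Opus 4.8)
The plan is to prove the two implications of the biconditional separately, the key device being that a family satisfying (1)--(4) has as its upward closure a genuine uniformity for which it is a basis.

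For the necessity direction, I would assume $\mathcal{B}$ is a basis of a uniformity $\U$ and verify (1)--(4) one at a time. Conditions (1) and (3) are immediate: every entourage of $\U$ contains $\Delta_X$, so every $\eps \in \mathcal{B}$ does; and since $\U$ is a filter, $\eps \cap \delta \in \U$ and hence contains some $\gamma \in \mathcal{B}$. For (2) I use that $\eps^{-1} \in \U$ whenever $\eps \in \U$, so $\eps^{-1}$ contains a basis element. The only step requiring two moves is (4): first invoke the composition axiom of $\U$ to produce $\eps' \in \U$ with $\eps' \circ \eps' \subset \eps$, then choose $\delta \in \mathcal{B}$ with $\delta \subset \eps'$ and appeal to monotonicity of composition, $\delta \circ \delta \subset \eps' \circ \eps'$, to conclude $\delta \circ \delta \subset \eps$.

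For the sufficiency direction, I would set
$$
\U := \{U \subset X \times X \ : \ \exists \eps \in \mathcal{B}, \ \eps \subset U\},
$$
the upward closure of $\mathcal{B}$, and check that it is a uniformity with $\mathcal{B}$ as basis. Condition (3) makes $\mathcal{B}$ a filterbase, whence $\U$ is a filter: closure under supersets is automatic, and given $U_1, U_2 \in \U$ with witnesses $\eps_1, \eps_2 \in \mathcal{B}$, condition (3) supplies $\gamma \subset \eps_1 \cap \eps_2 \subset U_1 \cap U_2$, so $U_1 \cap U_2 \in \U$. Condition (1) gives $\Delta_X \subset U$ for all $U \in \U$; condition (2) yields the symmetry axiom, since a witness $\eps \subset U$ produces $\delta \in \mathcal{B}$ with $\delta \subset \eps^{-1} \subset U^{-1}$, so $U^{-1} \in \U$; and condition (4) yields the composition axiom, since a witness $\eps \subset U$ produces $\delta \in \mathcal{B}$ with $\delta \circ \delta \subset \eps \subset U$. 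That $\mathcal{B}$ is a basis of this $\U$ is immediate from the definition.

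The argument is essentially routine bookkeeping, and I do not expect a genuine obstacle. If anything, the only points deserving a moment's care are step (4) in the necessity direction, where one must remember the monotonicity $\delta \subset \eps' \Rightarrow \delta \circ \delta \subset \eps' \circ \eps'$, and, dually, the verification that the upward closure is closed under finite intersections -- which is exactly what the filterbase condition (3) delivers.
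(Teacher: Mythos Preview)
Your proof is correct and entirely standard. Note, however, that the paper does not actually prove this lemma: it is stated as a well-known fact with a reference to \cite[Prop.~0.8]{RD}, and the only follow-up remark is that ``the corresponding induced uniformity is just the filter generated by $\mathcal{B}$'' --- which is exactly the construction you carry out in your sufficiency direction. So there is no paper proof to compare against; your argument simply fills in the routine verification that the paper elects to omit.
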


The corresponding induced uniformity is just the filter generated by $\mathcal{B}$.
Each uniformity $\U$ on $X$ defines a topology $top(\U)$ on $X$ as follows: a subset $A \subset X$ is open iff for each $a \in A$ there exists $\eps \in \U$ such that $\eps(a) \subset A$, where 
$\eps(x):=\{y \in X: (x,y) \in \eps\}.$
 
$top(\U)$ is Hausdorff iff \ $\cap \{\eps: \eps \in \mathcal{B}\} =\Delta.$ If otherwise not stated, we always consider only Hausdorff completely regular (i.e., Tychonoff) topological spaces, Hausdorff uniformities and  \textit{proper compactifications} $\a \colon X \to Y$ (i.e., $\a$ is an embedding). 

\begin{defin} \label{d:ind-un}
	Let $\mathcal{U}$ be a uniformity on $X.$ Then the relation $\dl_{\mathcal{U}}$ defined
	by \label{l:prox_for_unif} 
	$$ 
	A\dl_{\mathcal{U}}B \Longleftrightarrow
	\eps \cap (A\times B) \neq \emptyset \ \ \ \forall\eps \in \mathcal{U}
	$$
	is a proximity on $X$ which is called the \emph{proximity induced by the uniformity
		$\mathcal{U}.$}	
\end{defin}

Always, $top(\mathcal{U})=top(\dl_{\mathcal{U}})$. 
Conversely, every proximity $\dl$ on a topological space
defines canonically a totally bounded compatible uniformity
$\mathcal{U}_{\dl}$. 

We say that a proximity $\nu$ on $X$ is $\U$-\textit{uniform} if $\nu \preceq \dl_{\mathcal{U}}$.

\sk
\section{Uniform $G$-spaces}

\begin{defin} \label{d:equic} 
	Let $\pi\colon G \times X \to X$ be a group action.
	A uniformity $\U$ on $X$ is:
	\begin{enumerate} 
		\item \emph{equicontinuous} if (the set of all translations is equicontinuous)
		$$
		\forall x_0 \in X \ \forall \eps \in \U \ \exists O \in N_{x_0} \    \ \ (gx_0,gx  ) \in \eps \ \ \ \ \forall x \in O \ \forall g \in G;
		$$
		\item \emph{uniformly equicontinuous} if (the set of all translations is uniformly equicontinuous)
		$$
		\forall \eps \in \U \ \exists \delta \in \U \    \ \ (gx,gy  ) \in \eps \ \ \ \ \forall (x,y) \in \delta \ \forall g \in G;
		$$
		\item if the conditions (1) or (2) are true for a subset $P \subseteq G$ then we say that $P$ acts equicontinuously or uniformly equicontinuously, respectively.
	\end{enumerate}
\end{defin}


If $d$ is a $G$-invariant metric on $X$, then the corresponding uniform structure $\U(d)$ is a very natural case of a uniformly equicontinuous uniformity. 

\begin{defin} \label{d:qb} 
	Let $\pi\colon  G \times X \to X$ be an action of a topological group $G$ on a set $X$ and  $\U$ is a uniform structure on $X$.
	\ben 
	\item We say that $\U$ is \textit{saturated} if every translation $\pi^g \colon X \to X$ is uniformly continuous (equivalently, if $g \eps \in \U$ for every $\eps \in \U$ and $g \in G$). 
	\item \cite{Br} $\U$ 
	is \textit{bounded} (or, \emph{motion equicontinuous}) if 
	$$
	\forall \eps \in \U \ \exists V \in N_{e} \    \ \ (vx,x) \in \eps \ \ \  \forall v \in V.
	$$
	\item $\U$ is \textit{equiuniform} if it is bounded and saturated. Notation: $(X,\U) \in \rm{EUnif}^G$.
	\item (introduced in \cite{Me-diss85,Me-EqComp84}
$\U$ is \emph{quasibounded} (or, $\pi$-\textit{uniform at $e$}) if 
	$$
	\forall \eps \in \U \ \exists V \in N_{e} \  \exists \delta \in \U   \ \ 
	(vx,vy) \in \eps \ \ \ \ \forall (x,y) \in \delta \ \forall v \in V.
	$$
	\emph{$\pi$-uniform} will mean quasibounded and saturated (or, $\pi$-uniform at every $g_0 \in G$); meaning that  
	$$
	\forall \eps \in \U \ \exists V \in N_{g_0} \  \exists \delta \in \U   \ \ (vx,vy) \in \eps \ \ \ \ \forall (x,y) \in \delta \ \forall v \in V.
	$$  
	\nt Notation: $(X,\U) \in \rm{Unif}^G$.
	\een
\end{defin}

\sk 

Every compact $G$-space (with its unique uniform structure) is equiuniform.  
$\rm{EUnif}^G \subset \rm{Unif}^G$ (by the ``$3 \eps$-argument") and both are closed under $G$-subspaces, the supremum of uniform structures, uniform products 
 and completions (Fact \ref{f:G-compl}).
Quasibounded uniformities give simultaneous generalization of uniformly equicontinuous and bounded uniformities on a $G$-space.  
The class $\rm{Unif}^G$ is characterized by Kozlov \cite{Koz-Rectang12} in terms of \textit{semi-uniform maps} (in the sense of J. Isbell) on products. Bounded 
uniformities and $G$-compactifications play a major role in the book of V. Pestov \cite{Pe-nbook}.




\begin{remarks} \label{r:simple-qb} \ 
	\begin{enumerate}
		\item \cite{Me-EqComp84} There exists a natural 1--1 correspondence between proper $G$-compactifications of $X$ and totally bounded equiuniformities on $X$. 
		\item If the action on $(X,\U)$ is uniformly equicontinuous (e.g., every isometric action) 
		then $(X,\U) \in \rm{Unif}^G$.
		\item More generally: assume that there exists a neighborhood $V \in N_e$ such that $V$ acts uniformly
		equicontinuously on $(X,\U)$ and the action of $G$ on $X$ is $\U$-saturated.
		Then $(X,\U) \in \rm{Unif}^G$.
		
		\item 	Let $G$ and $X$ both are topological groups and $\a \colon G \times X \to X$ be a continuous action by group automorphisms. Then  $(X,\U) \in \rm{Unif}^G$, where $\U$ is right, left, two-sided or Roelcke uniformity on $X$. 
		\item Not every quasibounded action is bounded. 
		For example, the natural linear action of the circle group $\T$ on the metric space $\R^2$ is
		quasibounded (even, uniformly equicontinuous) because it preserves the metric but not bounded. 
	\end{enumerate} 
\end{remarks}

\sk 
\begin{prop} \label{equiun->S-prox}
	Let $\U$ be an equiuniformity on a $G$-space $X$. Then $\dl_{\U}$
	is a $G$-proximity (hence, the corresponding Smirnov compactification and also the Samuel compactification $s \colon (X,\U) \to sX$ are proper $G$-compactifications).
\end{prop}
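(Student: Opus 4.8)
The plan is to reduce everything to the equivariant Smirnov theorem (Fact \ref{th:Sm.thmG}). By Definition \ref{d:ind-un} the relation $\dl_{\U}$ is already a proximity with $top(\dl_{\U})=top(\U)$, hence a continuous (indeed compatible) proximity; moreover, since $\U$ is Hausdorff we have $\bigcap\{\eps:\eps\in\U\}=\Delta$, so $x\dl_{\U}y$ forces $(x,y)\in\Delta$, i.e. (P6) holds and $\dl_{\U}$ is separated. Thus by Smirnov's classical theorem (Fact \ref{th:Sm.thm}) $\dl_{\U}$ corresponds to a proper compactification, which is the Samuel compactification $s\colon (X,\U)\to sX$ (both $\dl_{\U}$ and the precompact reflection of $\U$ define the same proximity). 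It therefore remains to verify the two conditions of Fact \ref{th:Sm.thmG}: $G$-invariance and compatibility with the action. I expect the first to come from saturation and the second from boundedness.

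For $G$-invariance, suppose $A\dl_{\U}B$ and fix $g\in G$ and $\eps\in\U$. Saturation gives $g^{-1}\eps\in\U$, so $A\dl_{\U}B$ yields $a\in A$, $b\in B$ with $(a,b)\in g^{-1}\eps$, that is $(ga,gb)\in\eps$. Hence $(ga,gb)\in\eps\cap(gA\times gB)$, and as $\eps\in\U$ was arbitrary we conclude $gA\dl_{\U}gB$. This is condition (1).

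For compatibility (condition (2)), suppose $A\rem_{\U}B$, i.e. there is $\eps\in\U$ with $\eps\cap(A\times B)=\emptyset$. Using the uniformity axioms (Lemma \ref{l:UnifBasis}) choose a symmetric $\eps_1\in\U$ with $\eps_1\circ\eps_1\subseteq\eps$, and then, by boundedness of $\U$, a neighborhood $V\in N_e$ with $(vx,x)\in\eps_1$ for all $v\in V$ and all $x\in X$. I claim $VA\cap VB=\emptyset$. Indeed, if some $p=u_1a=u_2b$ with $u_1,u_2\in V$, $a\in A$, $b\in B$, then $(p,a)=(u_1a,a)\in\eps_1$ and $(p,b)=(u_2b,b)\in\eps_1$; by symmetry $(a,p)\in\eps_1$, so $(a,b)\in\eps_1\circ\eps_1\subseteq\eps$, contradicting $\eps\cap(A\times B)=\emptyset$. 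Hence $VA\cap VB=\emptyset$, which is precisely condition (2).

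By Fact \ref{th:Sm.thmG} the separated continuous proximity $\dl_{\U}$ is then a $G$-proximity, so its Smirnov compactification --- equivalently the Samuel compactification $s\colon(X,\U)\to sX$ --- is a proper $G$-compactification. The one genuinely load-bearing step is the compatibility argument: it is here that motion equicontinuity (boundedness) is indispensable, as it converts a single separating entourage $\eps$ into a whole neighborhood $V$ of translations that still separate $A$ and $B$. Saturation by itself only secures invariance, and a merely compatible uniformity need not give a $G$-proximity; so the interplay of the two halves of \emph{equiuniform} is exactly what the proof must exploit.
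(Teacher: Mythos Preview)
Your proof is correct. Both arguments ultimately feed the two hypotheses of Fact~\ref{th:Sm.thmG}, but you decouple them cleanly: saturation gives $G$-invariance, boundedness gives compatibility. The paper instead proves a single estimate valid near an arbitrary $g_0\in G$: given $A\rem_{\U}B$ witnessed by $\eps$, it finds $\eps'\in\U$ and $V\in N_{g_0}$ with $(x,y)\in\eps'\Rightarrow (g_1x,g_2y)\in\eps$ for all $g_1,g_2\in V$, whence $V^{-1}A\rem_{\U}V^{-1}B$. Taking $g_0$ arbitrary yields invariance, and taking $g_0=e$ yields directly the \emph{strong} form $(2^{\mathrm{str}})$ of Remark~\ref{r:semigr} (not merely $VA\cap VB=\emptyset$). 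Your route is more transparent about which half of ``equiuniform'' does which job; the paper's route is more compressed and gives the stronger separation $VA\rem_{\U}VB$ for free.
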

\begin{proof}
	Let $A \~dl_{\U} B.$ By Definition \ref{d:ind-un}
	there exists an entourage $\eps \in \U$,
	such that  $(A \times B) \cap \eps = \emptyset$.
	Fix $g_0 \in G$. Then we claim that there exist
	$\eps' \in \U$ and a neighborhood $V$ of $g_0$ in $G$ such that
	$V^{-1}A$ and $V^{-1}B$ are $\eps'$-far (this means that 
	$V^{-1}A \ \~dl_{\U} \ V^{-1}B$). 
	
	Since $\U$ is an equiuniformity, it follows that for $\eps \in \U$ we can choose
	$\eps' \in \U$ and $V \in N_{g_0}\,$ such that
	\begin{gather} \label{fr:prx.unif}
		(x,y) \in \eps' \To (g_1x,g_2y) \in \eps, \ \  \forall \ g_1,g_2 \in V.
	\end{gather}
	Now we claim that $(V^{-1}A \times V^{-1}B) \cap \eps' =\emptyset.$
	Assuming the contrary, we get
	$$
	\eps' \cap (V\obr A \times V\obr B) \neq \emptyset \To
	\exists \ (x,y) \in (V\obr A \times V\obr B ): (x,y) \in \eps'.
	$$
	Therefore by definition of $V \obr A$ and $ V \obr B$, we
	conclude 
	$$
	\exists \ g',g'' \in V: \ \ (g'x,g''y) \in A \times B.
	$$
	On the other hand by Formula \ref{fr:prx.unif} for $g',g'' \in V$, 
	we have 
	$$
	(x,y) \in \eps' \To (g'x,g''y) \in \eps.
	$$
	This means
	$$
	\forall \ \eps \in \U: \ (g'x,g''y) \in (A \times B) \cap \eps. 
	$$
	
	Hence $(A \times B) \cap \eps \neq \emptyset$, a contradiction. 
	%
\end{proof}

\begin{f} \label{f:G-compl} \emph{(Completion theorem \cite{Me-ec94})}  
	Let $(X,\U) \in \rm{Unif}^G$. Then the action $G \times X \to X$ continuously can be extended to the action on the completion $\widehat{G} \times \widehat{X} \to \widehat{X}$, where $(\widehat{X},\widehat{\U}) \in \rm{Unif}^{\widehat{G}}$ and $\widehat{G}$ is the Raikov completion of $G$. 
\end{f}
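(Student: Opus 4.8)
The plan is to build the extension in two stages: first complete the phase space $X$ while keeping the acting group $G$ fixed, and then enlarge the acting group from $G$ to its Raikov completion $\widehat G$, exploiting the completeness of the space obtained in the first stage.

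\textbf{Stage 1 (completing $X$).} Since $(X,\U)\in \rm{Unif}^G$ is saturated (Definition \ref{d:qb}), every translation $\pi^g\colon X\to X$ is uniformly continuous, so it extends uniquely to a uniformly continuous $\widehat{\pi^g}\colon \widehat X\to \widehat X$. Uniqueness of uniformly continuous extensions forces $\widehat{\pi^g}\circ\widehat{\pi^h}=\widehat{\pi^{gh}}$ and $\widehat{\pi^e}=\mathrm{id}$ on the dense subset $X$, hence everywhere, so we obtain an abstract action of $G$ on $\widehat X$. The quasiboundedness and boundedness conditions of Definition \ref{d:qb} are phrased through entourages of $\U$; since a basis of $\widehat\U$ comes from $\U$ (Lemma \ref{l:UnifBasis}), a routine density argument transfers each of them to $\widehat X$ (for instance $(vx,x)\in\eps$ for all $x\in X$ yields $(\widehat{\pi^v}\eta,\eta)\in\overline\eps$ for all $\eta\in\widehat X$). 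To get joint continuity of $G\times\widehat X\to\widehat X$ I would triangulate $(g\eta,g_0\eta_0)$ through $g\eta_0$, controlling the first leg by the $\pi$-uniformity at $g_0$ (the local uniform equicontinuity of $\{\widehat{\pi^v}\}_{v\in V}$ for $V\in N_{g_0}$) and the second by continuity of the orbit map $g\mapsto g\eta_0$ (immediate on $X$, and on $\widehat X$ by approximation). This gives $(\widehat X,\widehat\U)\in\rm{Unif}^G$.

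\textbf{Stage 2 (enlarging $G$ to $\widehat G$).} Fix $g\in\widehat G$ and a net $g_i\to g$ with $g_i\in G$, using that $G$ is dense in $\widehat G$. For $\xi\in\widehat X$ I claim the net $\bigl(\widehat{\pi^{g_i}}\xi\bigr)_i$ is Cauchy. Writing $g_j\xi=(g_jg_i^{-1})\,g_i\xi=k_{ji}\cdot(g_i\xi)$ with $k_{ji}=g_jg_i^{-1}$, the two-sided (Raikov) Cauchy condition gives $k_{ji}\to e$; applying the boundedness established in Stage 1 to the point $\eta=g_i\xi$ yields $(g_i\xi,g_j\xi)=(\eta,k_{ji}\eta)\in\eps$ for all large $i,j$, uniformly in $\xi$. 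By completeness of $\widehat X$ the limit exists, and an interleaving argument shows it is independent of the approximating net, so I would set $\widehat\pi(g,\xi):=\lim_i g_i\xi$. This is the crux of the argument, and it is exactly here that \emph{boundedness} (motion equicontinuity) is indispensable: it supplies the control $(v\eta,\eta)\in\eps$ uniformly over \emph{all} $\eta$, which is what is needed to tame the translations $g_i$ even though they roam over a noncompact part of $G$ where no global uniform equicontinuity is available.

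\textbf{Stage 3 (verification).} It remains to check that $\widehat\pi$ is a continuous action with $(\widehat X,\widehat\U)\in\rm{Unif}^{\widehat G}$. The group-action identities pass from $G$ to $\widehat G$ by taking limits along nets and invoking separate continuity. Saturation for the $\widehat G$-action holds because, by the uniform Cauchy estimate above, $\widehat{\pi^g}$ is a uniform limit of the uniformly continuous maps $\widehat{\pi^{g_i}}$, hence itself uniformly continuous; boundedness and quasiboundedness for the $\widehat G$-action descend from those of the $G$-action by density of $G$ in $\widehat G$ and of $X$ in $\widehat X$, giving $(\widehat X,\widehat\U)\in\rm{Unif}^{\widehat G}$. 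Finally, joint continuity of $\widehat G\times\widehat X\to\widehat X$ is proved exactly as in Stage 1, triangulating $(g\xi,g_0\xi_0)$ through $g_0\xi$ and using boundedness on $\widehat X$ for the first leg and continuity of $\widehat{\pi^{g_0}}$ for the second. The single genuine obstacle is the Cauchy estimate of Stage 2; everything else is bookkeeping with uniform extensions.
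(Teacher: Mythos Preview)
The paper does not prove this statement; it is recorded as a Fact with a citation to \cite{Me-ec94}. Your overall architecture (complete $X$ first, then enlarge $G$) is the natural one, and Stage~1 is essentially correct. However, Stage~2 contains a genuine gap: you invoke \emph{boundedness} (``motion equicontinuity''), but $(X,\U)\in\rm{Unif}^G$ means only that $\U$ is quasibounded and saturated (Definition~\ref{d:qb}(4)); boundedness is \emph{not} part of the hypothesis, and Remark~\ref{r:simple-qb}(5) exhibits a quasibounded action that is not bounded. Your sentence ``it is exactly here that boundedness (motion equicontinuity) is indispensable'' is therefore a red flag rather than a proof step, and the ``uniformly in $\xi$'' Cauchy claim is likewise unavailable. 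The same over-reach recurs in Stage~3, where you deduce saturation of $\widehat\pi^g$ from a ``uniform limit'' of the $\widehat{\pi^{g_i}}$.

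The repair uses the \emph{two-sided} nature of the Raikov Cauchy condition together with quasiboundedness, not boundedness. Given $\eps$, first use quasiboundedness on $\widehat X$ to choose a symmetric $V\in N_e$ and $\delta\in\widehat\U$ with $(va,vb)\in\eps$ whenever $v\in V$ and $(a,b)\in\delta$; then use right-Cauchyness to fix an anchor index $i_0$ with $k_i:=g_ig_{i_0}^{-1}\in V$ for all $i\geq i_0$, and set $\zeta:=g_{i_0}\xi$. Now use continuity of the orbit map at the \emph{fixed} point $\zeta$ (this you did establish in Stage~1) to obtain $W\in N_e$ with $(w\zeta,\zeta)\in\delta$ for $w\in W$; finally use left-Cauchyness (and continuity of conjugation by the fixed element $g_{i_0}$) to find $i_1\geq i_0$ with $k_i^{-1}k_j=g_{i_0}(g_i^{-1}g_j)g_{i_0}^{-1}\in W$ for $i,j\geq i_1$. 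Then $(k_i^{-1}k_j\,\zeta,\zeta)\in\delta$, and applying $k_i\in V$ gives $(g_j\xi,g_i\xi)=(k_j\zeta,k_i\zeta)\in\eps$. The same anchoring trick (factor $g_i=k_i g_{i_0}$ with $k_i\in V$ and use the fixed uniform modulus of $g_{i_0}$) yields a common $\sigma\in\widehat\U$ serving all $g_i$-translations for $i\geq i_0$, and hence $\widehat\pi^g$ after passing to the pointwise limit; this replaces the ``uniform limit'' argument in Stage~3.
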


\begin{cor} \label{c:DenseSubgr} 
	Let $G_1 \subset G$ be a dense subgroup of $G$. Then for every Tychonoff $G$-space $X$ the maximal equivariant compactifications $\beta_G X$ and $\beta_{G_1} X$ are the same. 
\end{cor}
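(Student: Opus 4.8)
The plan is to show that a (proper) compactification $\alpha\colon X\to Y$ is a $G$-compactification if and only if it is a $G_1$-compactification; since $\beta_G X$ and $\beta_{G_1}X$ are by definition the maximal elements of these two families of compactifications, the coincidence of the families forces $\beta_G X=\beta_{G_1}X$. Equivalently, by the correspondence of Remarks \ref{r:simple-qb}(1), it suffices to prove that a totally bounded compatible uniformity $\U$ on $X$ lies in $\mathrm{EUnif}^{G}$ iff it lies in $\mathrm{EUnif}^{G_1}$, where $G_1$ acts by restriction. One direction is immediate: restricting the continuous action $G\times Y\to Y$ along $G_1\hookrightarrow G$ gives a continuous action $G_1\times Y\to Y$, so every $G$-compactification is a $G_1$-compactification; in uniform terms, boundedness and saturatedness relative to $G$ restrict to $G_1$ once one notes $N_e(G_1)=\{V\cap G_1:\ V\in N_e(G)\}$. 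In particular $\beta_G X$ is a $G_1$-compactification and is therefore dominated by $\beta_{G_1}X$.

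For the reverse direction I would invoke the Completion theorem (Fact \ref{f:G-compl}). Let $\alpha\colon X\to Y$ be a $G_1$-compactification with associated totally bounded equiuniformity $\U$, so $(X,\U)\in \mathrm{EUnif}^{G_1}\subset \mathrm{Unif}^{G_1}$. Fact \ref{f:G-compl} extends the $G_1$-action continuously to $\widehat{G_1}\times\widehat X\to\widehat X$; since $\U$ is totally bounded, $\widehat X$ is compact and equals $Y$. The crucial point is that the Raikov completion does not see the passage to a dense subgroup: as $G_1$ is dense in $G$, the two-sided uniformity of $G_1$ is the trace of that of $G$, so $G_1$ is dense in the complete group $\widehat G$, whence $\widehat{G_1}=\widehat G\supseteq G$. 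Restricting the extended action to $G\subseteq\widehat{G_1}$ yields a continuous action $G\times Y\to Y$, and a routine net argument (density of $G_1$ in $G$, continuity of the original action, and the fact that $X\hookrightarrow Y$ is a topological embedding) shows that this restricted action agrees with the original $G$-action on $X$. Hence $\alpha$ is a $G$-compactification. Applied to $\alpha=\beta_{G_1}$, this shows $\beta_{G_1}X$ is a $G$-compactification and so is dominated by $\beta_G X$; combined with the previous paragraph, $\beta_G X=\beta_{G_1}X$.

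I expect the only genuine obstacle to be this $G_1\Rightarrow G$ direction, and within it the assertion that the action extends \emph{continuously} from $G_1$ to $G$: density of $G_1$ by itself yields pointwise convergence of translates but not the uniform control needed for joint continuity, so a naive approximation fails and one must pass through the identity $\widehat{G_1}=\widehat G$ together with the completeness supplied by Fact \ref{f:G-compl}. The same crux can alternatively be resolved purely on the level of function algebras, which has the advantage of covering the non-proper case as well: for $f\in\mathrm{RUC}_{G_1}(X)$ the orbit map $g\mapsto gf$ into $C^b(X)$ is isometric for translations, hence uniformly continuous on $G_1$ for the two-sided uniformity, and since $C^b(X)$ is complete it extends continuously to the completion $\widehat{G_1}=\widehat G$, its values being identified with the genuine translates $gf$ by pointwise agreement; thus $f\in\mathrm{RUC}_{G}(X)$, so $\mathrm{RUC}_{G}(X)=\mathrm{RUC}_{G_1}(X)$ and the two Gelfand spaces $\beta_G X$ and $\beta_{G_1}X$ coincide.
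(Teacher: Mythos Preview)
Your proposal is correct and follows essentially the same approach as the paper: the corollary is placed immediately after the Completion theorem (Fact \ref{f:G-compl}) with no proof given, indicating that the intended argument is precisely the one you outline---extend a totally bounded $G_1$-equiuniformity via Fact \ref{f:G-compl} to an action of $\widehat{G_1}=\widehat G\supseteq G$ on the completion $\widehat X=Y$. Your additional remarks (the density/net verification that the extended action restricts to the original one, and the alternative $\mathrm{RUC}$-algebra argument covering the non-proper case) are more explicit than anything the paper spells out, but they are compatible elaborations rather than a different route.
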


Every compact $G$-space (with its unique uniform structure) is equiuniform.  

\begin{cor} \label{c:TotBoundqb}  
	For totally bounded uniformities we have the coincidence $\rm{EUnif}^G =  \rm{Unif}^G$. 
\end{cor}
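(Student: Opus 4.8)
The plan is to prove Corollary \ref{c:TotBoundqb}, which asserts that for totally bounded uniformities the two classes $\rm{EUnif}^G$ and $\rm{Unif}^G$ coincide. Since we already know the inclusion $\rm{EUnif}^G \subset \rm{Unif}^G$ holds in general (by the $3\eps$-argument mentioned after Definition \ref{d:qb}), the only thing to establish is the reverse inclusion for totally bounded uniformities: if $(X,\U) \in \rm{Unif}^G$ with $\U$ totally bounded, then $(X,\U) \in \rm{EUnif}^G$. By Definition \ref{d:qb}, both classes require saturatedness, so that condition is automatically shared. What remains is to show that a totally bounded quasibounded uniformity is in fact bounded.

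\medskip
\noindent\textbf{Key steps.} First I would fix $\eps \in \U$ and use total boundedness to cover $X$ by finitely many $\delta$-small sets, where $\delta$ is chosen relative to $\eps$ via the quasiboundedness condition. Concretely, by quasiboundedness, given $\eps \in \U$ there exist $V \in N_e$ and $\delta \in \U$ such that $(vx,vy) \in \eps$ whenever $(x,y) \in \delta$ and $v \in V$. Total boundedness lets me write $X = \bigcup_{i=1}^n \delta(x_i)$ for finitely many points $x_1,\dots,x_n$. The strategy is then to control, for each $i$, the displacement $(vx_i, x_i)$ using the finitely many points together with a further neighborhood of $e$, and to combine these with the $\delta$-control on each cell $\delta(x_i)$ via a composition ($3\eps$-type) argument. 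I would pick a symmetric $\eps_1 \in \U$ with $\eps_1 \circ \eps_1 \circ \eps_1 \subset \eps$ and apply quasiboundedness to $\eps_1$ to obtain the matching $V_1, \delta_1$, then refine the covering using $\delta_1$.

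\medskip
\noindent The main obstacle is handling the ``basepoint motion'' $(vx_i, x_i)$ uniformly over $i$. For an arbitrary point $x \in X$ lying in some cell $\delta_1(x_i)$, I can write the displacement $(vx, x)$ as a composition of $(vx, vx_i)$, $(vx_i, x_i)$, and $(x_i, x)$; the first and third terms are controlled by quasiboundedness and by the cell membership respectively, but the middle term $(vx_i,x_i)$ must be made small simultaneously for all $n$ basepoints by shrinking the neighborhood of $e$. Since there are only finitely many basepoints and the action map $g \mapsto gx_i$ is continuous at $e$ (the action is continuous, so each orbit map is continuous), for each $i$ there is $W_i \in N_e$ with $(wx_i, x_i) \in \eps_1$ for all $w \in W_i$; intersecting, $W := \bigcap_{i=1}^n W_i \in N_e$ works for all basepoints at once. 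Setting $V' := V_1 \cap W \in N_e$ and chaining the three $\eps_1$-controls gives $(v'x, x) \in \eps_1 \circ \eps_1 \circ \eps_1 \subset \eps$ for all $v' \in V'$ and all $x \in X$, which is exactly the boundedness condition. The finiteness of the cover, guaranteed by total boundedness, is precisely what makes the uniform bound on basepoint motion possible — and this is where the hypothesis of total boundedness is essential. Concluding, $\U$ is bounded, hence equiuniform, completing the proof.
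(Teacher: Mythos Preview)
Your proof is correct, but it takes a different route from the paper's intended one. The corollary is placed immediately after the Completion Theorem (Fact~\ref{f:G-compl}) and the sentence ``Every compact $G$-space (with its unique uniform structure) is equiuniform''; the argument the paper has in mind (and spells out later in the proof of Lemma~\ref{q-->b}(3)) is: if $(X,\U)\in\rm{Unif}^G$ with $\U$ totally bounded, then the completion $(\widehat{X},\widehat{\U})$ is compact and, by Fact~\ref{f:G-compl}, carries a continuous $G$-action, hence is equiuniform; restricting to $X$ yields $(X,\U)\in\rm{EUnif}^G$.

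Your approach is more elementary: it avoids the completion machinery entirely and works directly with a finite $\delta_1$-cover together with continuity of the finitely many orbit maps $g\mapsto gx_i$ at $e$. This has the virtue of making the role of total boundedness completely transparent --- finiteness of the cover is precisely what allows the intersection $W=\bigcap_{i=1}^n W_i$ to remain a neighborhood of $e$. The paper's route is shorter once the completion theorem is available as a black box, while yours is self-contained. One small technical point: in your three-term chain $(v'x,v'x_i),\ (v'x_i,x_i),\ (x_i,x)$, the last term is only known to lie in $\delta_1$ from cell membership, not in $\eps_1$; you should explicitly arrange $\delta_1\subset\eps_1$ when invoking quasiboundedness (always possible by shrinking $\delta_1$), so that the composition lands in $\eps_1\circ\eps_1\circ\eps_1\subset\eps$.
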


\sk 
The following lemma with full proofs can be found only in my dissertation \cite{Me-diss85}. 
\begin{lem} \label{q-->b} \cite{Me-EqComp84, Me-diss85,Me-sing89}
	Let $X$ be a $G$-space with a topologically compatible uniformity $\U$.
	Assume that $\U$ is quasibounded. Then there exists a topologically compatible uniformity
	$\U^G \subseteq \U$ on $X$ such that $\U^G$ is bounded. Furthermore, 
	\begin{enumerate}
		\item if 
		$(X,\U) \in \rm{Unif}^G$ then $(X,\U^G) \in \rm{EUnif}^G$; 
		\item if $(X,\mu) \in \rm{EUnif}^G$ and $\mu \subset \U$ then $\mu \subset \U^G$.
		\item if $\U$ is totally bounded and $(X,\U) \in \rm{Unif}^G$, then $\U=\U^G$ (is an equiuniformity); 
		
		\item if $\U$ and $G$ are metrizable, then $\U^G$ is also metrizable;  
	\end{enumerate} 
\end{lem}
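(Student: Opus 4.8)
The plan is to construct $\U^G$ explicitly and then read off the four additional properties from the construction. Throughout I write $M_V := \{(vx,x) : v \in V, \ x \in X\}$ for the \emph{motion set} attached to $V \in N_e$; note that $\Delta_X \subseteq M_V$ (since $e \in V$) and that boundedness of a uniformity is precisely the requirement that each of its entourages contain some $M_V$. I would let $\U^G$ be the uniformity generated by the basis
$$
\mathcal{B}^G := \{M_V \circ \eps \circ M_V : \ \eps \in \U \text{ symmetric}, \ V \in N_e \text{ symmetric}\}.
$$
Since $M_V \circ \eps \circ M_V \supseteq \eps$, every basic set lies in $\U$, so $\U^G \subseteq \U$; and since $M_V \circ \eps \circ M_V \supseteq M_V$, every basic set contains a motion set, so $\U^G$ is bounded. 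Thus the real content is to check that $\mathcal{B}^G$ is a uniformity basis (Lemma \ref{l:UnifBasis}) and that $top(\U^G) = top(\U)$.

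The filter-base, reflexivity and symmetry axioms are immediate (intersections: $M_{V_1\cap V_2}\circ(\eps_1\cap\eps_2)\circ M_{V_1\cap V_2}$ refines both members; symmetry: $(M_V\circ\eps\circ M_V)^{-1} = M_{V^{-1}}\circ\eps^{-1}\circ M_{V^{-1}}$, using $M_V^{-1}=M_{V^{-1}}$). The one nontrivial axiom, and the place where quasiboundedness is indispensable, is the square-refinement. Here I would compute
$$
(M_{V'}\circ\eps'\circ M_{V'})\circ(M_{V'}\circ\eps'\circ M_{V'}) = M_{V'}\circ \eps'\circ M_{V'^2}\circ \eps'\circ M_{V'}
$$
and absorb the central block $\eps'\circ M_{V'^2}\circ\eps'$ as follows. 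Given a target $\eps$, I choose $\eps''$ with $\eps'\circ\eps''\subseteq\eps$, apply Definition \ref{d:qb}(4) to $\eps''$ to obtain $V_0\in N_e$ and $\delta\in\U$ with $(wa,wb)\in\eps''$ whenever $(a,b)\in\delta$ and $w\in V_0$, and then take $\eps'\subseteq\delta$ symmetric and $V'$ symmetric with $V'^2\subseteq V_0$ and $V'^3\subseteq V$. Chasing a point $(p,q)$ through the two factors produces $a_1,a_2,b_2$ and $w\in V'^2$ with $p = s_1a_1$ ($s_1\in V'$), $(a_1,b_1)\in\eps'$, $b_1 = wa_2$, $(a_2,b_2)\in\eps'$, $q = t_2^{-1}b_2$ ($t_2\in V'$). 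Quasiboundedness turns $(a_2,b_2)\in\eps'\subseteq\delta$ into $(wa_2,wb_2)\in\eps''$, i.e. $(b_1,wb_2)\in\eps''$; combined with $(a_1,b_1)\in\eps'$ this gives $(a_1,wb_2)\in\eps'\circ\eps''\subseteq\eps$. Taking $P:=a_1$, $Q:=wb_2$ exhibits $p = s_1P$ with $s_1\in V$, $(P,Q)\in\eps$, and $q = (wt_2)^{-1}Q$ with $wt_2\in V'^3\subseteq V$, so $(p,q)\in M_V\circ\eps\circ M_V$. This is the main obstacle: quasiboundedness is exactly what lets one ``slide'' the internal group motion $w$ out to a boundary factor, and without it the mixed terms only close up to $\eps\circ\eps$.

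For compatibility, $\U^G\subseteq\U$ gives $top(\U^G)\subseteq top(\U)$, and for the reverse I fix $\eps\in\U$ and $x\in X$ and produce a basic entourage $B$ with $B(x)\subseteq\eps(x)$. A direct computation gives $(M_{V'}\circ\eps'\circ M_{V'})(x) = \bigcup_{\sigma,\tau\in V'}\tau^{-1}\eps'(\sigma^{-1}x)$. Using joint continuity of the action at $(e,x)$ I choose $V_1\in N_e$ and a neighborhood $O_1\subseteq\eps(x)$ of $x$ with $\tau^{-1}u\in\eps(x)$ for all $\tau\in V_1$, $u\in O_1$; then, by continuity of $\sigma\mapsto\sigma^{-1}x$ together with compatibility of $\U$, I pick $\eps'$ and $V'\subseteq V_1$ with $\eps'(\sigma^{-1}x)\subseteq O_1$ for $\sigma\in V'$. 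This forces $(M_{V'}\circ\eps'\circ M_{V'})(x)\subseteq\eps(x)$, as required.

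Finally the four items. For (1), saturation of $\U^G$ follows from the identity $g\cdot(M_V\circ\eps\circ M_V) = M_{gVg^{-1}}\circ(g\eps)\circ M_{gVg^{-1}}$: if $\U$ is saturated then $g\eps\in\U$, and $gVg^{-1}\in N_e$, so each translate of a basic entourage is again basic, whence $(X,\U^G)\in\rm{EUnif}^G$. For (2), given $\eta\in\mu$ with $\mu\in\rm{EUnif}^G$ and $\mu\subseteq\U$, I pick symmetric $\eta_1\in\mu$ with $\eta_1\circ\eta_1\circ\eta_1\subseteq\eta$ and, by boundedness of $\mu$, a $V\in N_e$ with $M_V\subseteq\eta_1$; then $M_V\circ\eta_1\circ M_V\subseteq\eta_1\circ\eta_1\circ\eta_1\subseteq\eta$ exhibits $\eta$ as a superset of a basic entourage, so $\eta\in\U^G$ and $\mu\subseteq\U^G$. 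For (3), it suffices by (2) to show that a totally bounded member of $\rm{Unif}^G$ is already bounded: given $\eps$, choose $\eps_1$ with $\eps_1\circ\eps_1\circ\eps_1\subseteq\eps$, apply quasiboundedness to get $V,\delta$, cover $X$ by finitely many balls $\delta(x_1),\dots,\delta(x_n)$, use continuity of the orbit maps at each $x_i$ to get $W\in N_e$ with $W\subseteq V$ and $(wx_i,x_i)\in\eps_1$ for all $i$, and then for arbitrary $x\in\delta(x_i)$ and $w\in W$ chain $(wx,wx_i)\in\eps_1$ (quasiboundedness), $(wx_i,x_i)\in\eps_1$, $(x_i,x)\in\delta\subseteq\eps_1$ to conclude $(wx,x)\in\eps_1\circ\eps_1\circ\eps_1\subseteq\eps$; hence $\U\in\rm{EUnif}^G$ and $\U=\U^G$. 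For (4), if $\{\eps_n\}$ is a countable base of $\U$ and $\{V_m\}$ a countable base at $e$, then monotonicity of $(\eps,V)\mapsto M_V\circ\eps\circ M_V$ shows $\{M_{V_m}\circ\eps_n\circ M_{V_m}\}$ is a countable base of $\U^G$, which is therefore pseudometrizable, and metrizable since $top(\U^G)=top(\U)$ is Hausdorff.
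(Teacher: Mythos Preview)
Your construction is the paper's construction in different clothes: for symmetric $V$ one has $M_V\circ\eps\circ M_V=\{(x,y):\exists u_1,u_2\in V,\ (u_1x,u_2y)\in\eps\}$, which is exactly the entourage $[V,\eps]$ of Equation~(\ref{eq:Ueps}). So the definition of $\U^G$, the verification that it is a bounded compatible uniformity, and items (1), (2), (4) all match the paper's argument essentially line for line.

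The one substantive divergence is item (3). The paper does \emph{not} argue directly: it invokes the completion theorem (Fact~\ref{f:G-compl}) to embed $(X,\U)$ into its compact completion, observes that compact $G$-spaces are equiuniform, and then applies (2). Your route is more elementary and self-contained: you show directly that a totally bounded quasibounded uniformity is already bounded, by covering $X$ with finitely many $\delta$-balls, using continuity of the finitely many orbit maps $g\mapsto gx_i$ at $e$, and chaining. This avoids the external dependence on Fact~\ref{f:G-compl} at the cost of a short extra computation; it is a genuine simplification for this particular statement. (For the chain to close you need $\delta\subseteq\eps_1$ and $\delta$ symmetric; both are harmless extra assumptions you should state.)

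One expositional glitch: in the square-refinement step you write ``choose $\eps''$ with $\eps'\circ\eps''\subseteq\eps$'' before $\eps'$ has been defined. The intended reading is clear --- take $\eps''$ with $\eps''\circ\eps''\subseteq\eps$ and later $\eps'\subseteq\eps''\cap\delta$ --- but as written the dependency is circular; reorder the choices.
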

\begin{proof}
	For every $U \in N_e$ and $\eps \in \U$,  consider
	\begin{equation} \label{eq:Ueps} 
		[U,\eps] :=\{(x,y) \in X\times X: \ \ \exists u_1, u_2 \in U \ \ \ (u_1x,u_2y) \in \eps \}. 
	\end{equation}

	Then 
	$\Delta_X \subset \eps \subset [U,\eps]$ and  
	$[U_1,\eps_1] \subset [U_2,\eps_2]$ for every $U_1 \subset U_2, \eps_1 \subset \eps_2$. 
	
	It follows that  $[U_1 \cap U_2,\eps_1 \cap \eps_2] \subset [U_1,\eps_1] \cap [U_2,\eps_2]$. 
	It is also easy to see that if $U=U^{-1}$ and $\eps^{-1}=\eps$ are symmetric,  then also $[U,\eps]^{-1}=[U,\eps]$ is symmetric. 
	
	The system $\a:=\{[U,\eps]\}_{U \in N_e, \eps \in \U}$ is a filter base on the set $X \times X$.
	
	Define by $\U^G$ the corresponding filter generated by $\a$. We show that $\U^G$ is a uniformity on the set $X$. The conditions (1),(2),(3) of Lemma \ref{l:UnifBasis} are satisfied. We have to show only condition (4) for the members of the base $\a$.

	Let $[U,\eps] \in \a$. We have to show that there exists $[V,\delta]  \in \a$ such that
	$$[V,\delta] \circ [V,\delta] \subseteq [U,\eps].$$
	
	Choose $\eps_1 \in \U$ such that $\eps_1^2 \subseteq \eps$. By the quasiboundedness
	condition for $\eps_1$ there exist $\delta \in \U$ and $V \in N_e$ such that 
	\begin{equation}\label{1}
		(x,y) \in \delta, v \in V  \Rightarrow (vx,vy) \in \eps_1.
	\end{equation} 
	Without loss of generality (by properties of topological groups), we can assume in addition that
	$$V=V^{-1}, \ \ V^2 \subset U.$$
	
	We check now that $[V,\delta]^2 \subseteq [U,\eps]$.
	Let $(x,y), (y,z) \in [V,\delta]$. Then there exist $v_1,v_2,v_3,v_4 \in V$ such that $$(v_1x,v_2y), (v_3x,v_4y) \in \delta.$$
	Then by (\ref{1}) (taking into account that $V=V^{-1}$), we get
	$$
	(v_2^{-1}v_1x,y) \in \eps_1, \ \ (y,v_3^{-1}v_4z) \in \eps_1.
	$$
	
	Therefore,
	$$
	(v_2^{-1}v_1x, v_3^{-1}v_4z) \in \eps_1 \circ \eps_1 \subseteq \eps.
	$$
	
	Since $v_2^{-1}v_1$ and $v_3^{-1}v_4$ both are in $V^{-1}V \subset U$, we conclude that $(x,z) \in [U,\eps]$.

	It is easy to see the other axioms. So $\U^G$ is a uniformity.
	\sk
	
	\textbf{The uniformity $\U^G$ is topologically compatible with $X$.} That is, $top(\U^G)=top(\U)$.
	Clearly, $\eps \subseteq [U,\eps]$ for all $U \in N_e, \eps \in \U$. Hence, $\U^G \subseteq \U$.
	As to the inverse direction $\U^G \supseteq \U$, 
	one may show that for every $\eps \in \U$ and $x_0 \in X$ there exist $[V,\delta] \in \a$ such that
	$[V,\delta](x_0) \subseteq \eps(x_0)$. Indeed, using the continuity of the action, one may choose
	$\delta, \gamma \in \U$ and $V \in N_e$ such that:
	
	a) $g \ \gamma (x_0) \subset \eps(x_0) \ \ \ \forall g \in V$
	
	b) $(gx_0,x_0) \in \delta \ \ \ \forall g \in V$
	
	c) $ \delta^2 \subset \gamma, V=V^{-1}, \gamma=\gamma^{-1}$.
	
	Now, if $y \in [V,\delta](x_0)$ then $(v_1y,v_2x_0) \in \delta$ for some $v_1, v_2 \in V$. We obtain that
	$(v_1y,x_0) \in \delta \circ \delta \subset \gamma$. Hence, $v_1y \in \gamma(x_0)$. So, $y \in v_1^{-1} \gamma(x_0) \subseteq \eps(x_0)$.

	\sk
	\sk
	
	\textbf{$\U^G$ is bounded}. Indeed, $(x,ux) \in [U,\eps]$ for every $x \in X$ and every $u \in U$
	(because if we choose $u_1:=u, u_2:=e \in U$ then
	$(u_1x,u_2ux)=(ux,ux) \in \Delta_X \subseteq \eps$).

	\sk\sk
	\textbf{$\U^G $ is saturated if $\U$ is saturated.}  Let $g_0 \in G$. Then $g_0^{-1}Ug_0 \in N_e$ for every $U \in N_e$ and $g_0^{-1} \eps \in \U$ for every $\eps \in \U$ because $\U$ is saturated. Now observe that $g_0 [U,\eps]=[g_0^{-1}Ug_0,g_0^{-1}\eps]$.  
	
	\sk 
	The assertions (1), (2) and (4) easily follow now from the construction. 
	In order to check (3) consider the completion $(X,\U) \hookrightarrow (\widehat{X},\widehat{\U})$, which, in fact is a compactification because $\U$ is totally bounded. According to the completion theorem \cite{Me-ec94} we have $(\widehat{X},\widehat{\U}) \in \rm{Unif}^G$ and the action $G \times \widehat{X} \to \widehat{X}$ is continuous. Now, by Remark \ref{r:simple-qb}.1,  we obtain that $\U$ is an equiuniformity. Hence, by (2) we conclude that $\U=\U^G$.  
\end{proof}

\sk 
The equivalence of (1) and (2) in the following result is an old result which goes back at least to R. Brook \cite{Br} and J. de Vries \cite{Vr-can75}. 

\sk 
\begin{thm} \label{t:Tyc} \cite{Me-diss85} 
	Let $X$ be a Tychonoff 
	$G$-space.
	The following are equivalent: 
	\ben
	\item $X$ is $G$-Tychonoff.
	\item $(X,\U) \in \rm{Unif}^G$ for some compatible uniform structure $\U$ on $X$. 
	\item There exists a compatible uniform structure $\U$ on $X$ which is quasibounded.  
	
	\een
\end{thm}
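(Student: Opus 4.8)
The plan is to prove the three conditions equivalent by the cycle $(1)\Rightarrow(2)\Rightarrow(3)\Rightarrow(2)\Rightarrow(1)$, where $(2)\Rightarrow(3)$ is immediate (an element of $\rm{Unif}^G$ is by definition quasibounded and saturated, so in particular quasibounded), and the genuine content splits into the equivalence $(1)\Leftrightarrow(2)$ and the upgrade of a merely quasibounded structure to a $\pi$-uniform one in $(3)\Rightarrow(2)$.

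For $(1)\Rightarrow(2)$ I would fix an equivariant embedding $X\hookrightarrow Y$ into a compact Hausdorff $G$-space $Y$. The unique compatible uniformity of $Y$ is equiuniform, and since $\rm{EUnif}^G$ is stable under passing to $G$-subspaces, its restriction to $X$ is a compatible equiuniformity; as $\rm{EUnif}^G\subseteq\rm{Unif}^G$ this yields $(2)$. For $(2)\Rightarrow(1)$ I start from a compatible $(X,\U)\in\rm{Unif}^G$ and apply Lemma~\ref{q-->b}(1) to obtain a compatible equiuniformity $\U^G\subseteq\U$ with $(X,\U^G)\in\rm{EUnif}^G$. Being compatible, $\U^G$ induces a compatible $G$-proximity $\dl_{\U^G}$ by Proposition~\ref{equiun->S-prox}, whose Smirnov compactification is a proper (embedding) $G$-compactification; hence $X$ is $G$-Tychonoff. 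Note that Lemma~\ref{q-->b} is essential here, since a $\pi$-uniform structure need not itself be bounded.

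The crux is $(3)\Rightarrow(2)$: from a compatible quasibounded $\U$, which need not be saturated, I must manufacture a compatible $\pi$-uniform structure. The idea is to pass to the saturation $\U^{sat}:=\sup_{g\in G} g\U$, the uniformity whose base consists of the finite intersections $g_1\eps_1\cap\cdots\cap g_n\eps_n$ with $g_i\in G$ and $\eps_i\in\U$, where $g\eps:=(\pi^g\times\pi^g)(\eps)$. Saturation is then automatic, because $h\bigl(g_1\eps_1\cap\cdots\cap g_n\eps_n\bigr)\supseteq (hg_1)\eps_1\cap\cdots\cap(hg_n)\eps_n$ is again basic. Since each $\pi^g$ is a homeomorphism, every $g\U$ induces $top(X)$, and a supremum of uniformities all inducing $top(X)$ induces $top(X)$ as well, so $\U^{sat}$ is compatible.

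The main obstacle—and the one point where the topological group structure of $G$ (not merely its group structure) enters—is to check that quasiboundedness is inherited by $\U^{sat}$. For a basic entourage $\eta=\bigcap_{i} g_i\eps_i$ one rewrites the requirement $(vx,vy)\in g_i\eps_i$ as $\bigl((g_i^{-1}vg_i)a,(g_i^{-1}vg_i)b\bigr)\in\eps_i$ with $a=g_i^{-1}x$, $b=g_i^{-1}y$, and applies quasiboundedness of $\U$ to each $\eps_i$ to get $V_i\in N_e$ and $\delta_i\in\U$. The neighborhood that works for $\eta$ is then $V:=\bigcap_i g_iV_ig_i^{-1}$, together with $\delta:=\bigcap_i g_i\delta_i\in\U^{sat}$. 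The delicate step is precisely that each conjugate $g_iV_ig_i^{-1}$ lies in $N_e$, which holds because conjugation is continuous in the topological group $G$, so that the finite intersection $V$ is again a neighborhood of $e$. Granting this, one verifies directly that $(x,y)\in\delta$ and $v\in V$ force $(vx,vy)\in\eta$, establishing that $\U^{sat}$ is quasibounded; being also saturated and compatible, $(X,\U^{sat})\in\rm{Unif}^G$, which is $(2)$.
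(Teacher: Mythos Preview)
Your proposal is correct and follows essentially the same route as the paper. The paper proves $(1)\Rightarrow(2)\Rightarrow(3)\Rightarrow(1)$, but its $(3)\Rightarrow(1)$ step is exactly your $(3)\Rightarrow(2)$ saturation construction followed by your $(2)\Rightarrow(1)$ use of Lemma~\ref{q-->b} and Proposition~\ref{equiun->S-prox}; your version simply makes the intermediate passage through $\rm{Unif}^G$ explicit and spells out the conjugation argument for quasiboundedness of $\U^{sat}$ that the paper leaves as ``straightforward (use that the conjugations are continuous in any topological group)''.
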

\begin{proof} (1) $\Rightarrow$ (2) Let $X$ be $G$-Tychonoff. 
	Consider a proper $G$-compactification $\a \colon X \hookrightarrow Y$. 
	Since the natural uniformity on $Y$ is an equiuniformity, then it induces on $X$ a (precompact) equiuniformity $\U$. So, $(X,\U) \in \rm{EUnif}^S$ 
	with respect to some compatible uniformity $\U$. Now recall that $\rm{EUnif}^G \subseteq \rm{Unif}^G$.  
	
	(2) $\Rightarrow$ (3) is trivial.  
	
	(3) $\Rightarrow$ (1) 
	Let $\xi$ be a quasibounded uniformity on $X$. Then one may easily find a stronger quasibounded uniformity $\U$ which, in addition, is $G$-saturated. 
	Indeed, the system
	$\Sigma:=\{g\eps : g \in G, \eps \in \U\}$, where
	$g\eps :=\{(gx,gy) \in X \times X: \  (x,y) \in \eps\}$ 
	is a subbase of a filter of subsets in $X \times X$. Denote by $\U$ the corresponding filter generated by $\Sigma$.
	Then $\U$ is a saturated uniformity on $X$, $\xi \subseteq \U$ and $top(\xi)=top(\U)$.
	If $\Sigma$ is quasibounded or bounded, it is straightforward to show (use that the conjugations are continuous in any topological group), then $\U$ respectively is quasibounded or bounded. 
	
	Hence, we obtain $(X,\U) \in \rm{Unif}^G$. Then $(X,\xi) \in \rm{EUnif}^G$ according to Lemma \ref{q-->b}. By Proposition \ref{equiun->S-prox}, $\dl_{\U}$
	is a $G$-proximity (hence, the corresponding Smirnov's compactification is a $G$-compactification). 
\end{proof}

\begin{cor} \label{c:LudVr} \emph{(Ludescher--de Vries \cite{Lud-Vr80})}   
	Every continuous uniformly equicontinuous action of a topological group $G$ on $(X,\U)$ is $G$-Tychonoff. In particular, it is true if $X$ admits a $G$-invariant metric. 
\end{cor}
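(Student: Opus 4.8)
The plan is to deduce this directly from Remark \ref{r:simple-qb}.2 together with the implication (2)$\Rightarrow$(1) of Theorem \ref{t:Tyc}. Since $(X,\U)$ is a Hausdorff uniform space, its underlying topology $top(\U)$ is Tychonoff, so Theorem \ref{t:Tyc} is applicable once we have verified that its condition (2) holds, i.e. that $(X,\U) \in \rm{Unif}^G$.

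First I would check that a uniformly equicontinuous action lands in $\rm{Unif}^G$, that is, that $\U$ is both quasibounded and saturated. Quasiboundedness is immediate from Definition \ref{d:equic}(2): given $\eps \in \U$, take the $\delta \in \U$ furnished by uniform equicontinuity, which is valid for \emph{all} $g \in G$ simultaneously; then choosing $V := G$ (or, if one insists on a neighborhood, any $V \in N_e$) witnesses the quasiboundedness requirement $(vx,vy) \in \eps$ for all $(x,y) \in \delta$ and all $v \in V$. Saturation is equally direct: for each fixed $g \in G$ the same $\delta$ shows that the translation $\pi^g \colon X \to X$ is uniformly continuous, hence $g\eps \in \U$ for every $\eps \in \U$. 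This is precisely the content of Remark \ref{r:simple-qb}.2, so we conclude $(X,\U) \in \rm{Unif}^G$. Applying the implication (2)$\Rightarrow$(1) of Theorem \ref{t:Tyc} now yields that $X$ is $G$-Tychonoff, as required.

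For the \emph{in particular} clause, given a $G$-invariant metric $d$ I would take $\U := \U(d)$, the metric uniformity generated by the entourages $\eps_r := \{(x,y) : d(x,y) < r\}$ for $r > 0$. Since each translation $\pi^g$ is an isometry we have $d(gx,gy) = d(x,y)$, so for every $r > 0$ the choice $\delta := \eps_r$ satisfies $(x,y) \in \delta \Rightarrow (gx,gy) \in \eps_r$ simultaneously for all $g \in G$; thus $\U(d)$ is uniformly equicontinuous and the first part applies verbatim. (This is exactly the remark recorded right after Definition \ref{d:equic}.)

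The main obstacle is essentially absent: the statement is a clean corollary of the machinery already assembled, and the entire argument is a short chaining of Remark \ref{r:simple-qb}.2 and Theorem \ref{t:Tyc}. The only point that demands (routine) care is the unwinding of definitions confirming that uniform equicontinuity places $(X,\U)$ inside $\rm{Unif}^G$ — which is itself the assertion of Remark \ref{r:simple-qb}.2 — so in a fully detailed write-up one would either cite that remark or reproduce the two one-line verifications of quasiboundedness and saturation given above.
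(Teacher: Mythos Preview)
Your proposal is correct and takes essentially the same approach as the paper: the corollary is stated immediately after Theorem \ref{t:Tyc} precisely because it follows by combining Remark \ref{r:simple-qb}.2 (uniform equicontinuity $\Rightarrow (X,\U)\in\rm{Unif}^G$) with the implication (2)$\Rightarrow$(1) of that theorem, and the $G$-invariant metric case is the parenthetical observation already recorded after Definition \ref{d:equic}. Your unwinding of quasiboundedness and saturation is exactly the content of Remark \ref{r:simple-qb}.2, so nothing is missing.
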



\sk 

\begin{lem} \label{l:Sigma}
	Let $\pi\colon  G \times X \to X$ be a continuous action and $\Sigma:=\{d_i\}_{i \in I}$ be a bounded system of pseudometrics on $X$ such that the induced uniform structure $\U$ on $X$ is topologically compatible. Assume that $M$ is a family of
	nonempty subsets in $G$ such that: 
	$$\forall A \in M \ \forall d_i \in \Sigma  \ \forall x,y \in X \ \ \ d_{A,i}(x,y):=sup_{g \in A} d_i(gx,gy) < \infty.$$ 
	Define by $\Sigma_M$ the system of pseudometrics $\{d_{A,i}: \ A \in M, i \in I\}$ on $X$. Let $\xi=\xi(\Sigma,M)$ be the corresponding uniform structure on $X$ generated by the system $\Sigma_M$.
	
	\ben
	\item If every $A \in M$ acts equicontinuously on $(X,\U)$, then $top(\xi)=top(\U)$.
	\item If for every $A \in M$ there exist $V \in N_e$ and $B \in M$ such that $AV \subseteq B$, then the action is $\xi$-quasibounded.
	\item If there exists $A \in M$ such that $e \in A$, then $\U \subseteq \xi$.
	\een
\end{lem}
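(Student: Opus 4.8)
The plan is to verify the three statements essentially directly from the definitions, treating each of the pseudometrics $d_{A,i}$ as the basic building block of the uniformity $\xi$. The starting observation is that a basic entourage of $\xi$ is of the form $\{(x,y): d_{A,i}(x,y) < r\}$ for $A \in M$, $i \in I$, $r > 0$, and that by definition $d_{A,i}(x,y) < r$ means $\sup_{g \in A} d_i(gx, gy) < r$, so that $d_i(gx,gy) < r$ for all $g \in A$ simultaneously. I will use this unpacking repeatedly.

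For statement (1), the inclusion $\U \subseteq \xi$ giving $top(\U) \subseteq top(\xi)$ needs care (it is essentially part (3)), so the content of (1) is the reverse, $top(\xi) \subseteq top(\U)$: I would fix $x_0 \in X$, a basic $\xi$-entourage determined by $A \in M$, $d_i$, and radius $r$, and find a $top(\U)$-neighborhood $O$ of $x_0$ contained in the $\xi$-ball $\{y : d_{A,i}(x_0,y) < r\}$. Here equicontinuity of $A$ on $(X,\U)$ is exactly what supplies, for each $\eps$-level, a neighborhood $O$ of $x_0$ with $(gx_0, gy) \in \eps$ for all $g \in A$ and all $y \in O$; choosing $\eps$ corresponding to the $d_i$-ball of radius $r$ then forces $d_i(gx_0, gy) < r$ uniformly in $g \in A$, i.e.\ $d_{A,i}(x_0,y) < r$. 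Since $top(\U)$ and $top(\xi)$ are both generated by their respective pseudometric systems, matching neighborhood bases pointwise gives the topological equality.

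For statement (2), I would check $\xi$-quasiboundedness directly against its definition: given a basic entourage $\eps$ of $\xi$, coming from $d_{B,i}$ and radius $r$, I must produce $V \in N_e$ and a $\xi$-entourage $\delta$ so that $(x,y) \in \delta$ and $v \in V$ imply $(vx,vy) \in \eps$. The hypothesis hands me, for the given $B \in M$, a neighborhood $V \in N_e$ and some $A \in M$ with $BV \subseteq A$ (I will use the hypothesis in the form $AV \subseteq B$ appropriately, reindexing so that the ``large'' set multiplies on the right); then I take $\delta$ to be the $d_{A,i}$-ball of radius $r$. The point is that if $d_{A,i}(x,y) < r$ then $d_i(gx,gy) < r$ for every $g \in A$, and for $v \in V$ the element $bv$ (with $b \in B$) lies in $BV \subseteq A$, so $d_i(bvx, bvy) < r$ for all $b \in B$, which is precisely $d_{B,i}(vx,vy) < r$, i.e.\ $(vx,vy) \in \eps$. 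The only real care is bookkeeping with left versus right multiplication and making the index sets line up.

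For statement (3), the hypothesis $e \in A$ for some $A \in M$ gives $d_{A,i}(x,y) = \sup_{g \in A} d_i(gx,gy) \geq d_i(ex,ey) = d_i(x,y)$, so each generating pseudometric $d_i$ of $\U$ is dominated by the pseudometric $d_{A,i}$ of $\xi$; hence every basic $\U$-entourage contains a basic $\xi$-entourage, giving $\U \subseteq \xi$. I expect the main obstacle to be not any single estimate but the consistent handling of the one-sided-multiplication hypothesis in part (2) together with keeping the quantifier order in the equicontinuity argument of part (1) correct; everything else is a routine unpacking of the $\sup$-definition of $d_{A,i}$.
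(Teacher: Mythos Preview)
Your arguments for (2) and (3) are correct and coincide with the paper's: the paper dismisses (1) and (3) as ``straightforward'' and for (2) gives exactly the monotonicity estimate $d_{A,i}(vx,vy)\le d_{B,i}(x,y)$ whenever $AV\subseteq B$, which is your computation with the names of $A$ and $B$ interchanged.

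There is one genuine gap in your plan for part (1). You correctly obtain $top(\xi)\subseteq top(\U)$ from equicontinuity of each $A\in M$, but you defer the reverse inclusion $top(\U)\subseteq top(\xi)$ to part (3). That is not legitimate: part (3) carries the additional hypothesis that $e\in A$ for some $A\in M$, which is \emph{not} assumed in (1), so as written your plan does not yield the full equality. The repair is short and uses only the standing assumptions (continuity of the action and nonemptiness of each $A\in M$): fix any $A\in M$ and any $g_0\in A$; since $x\mapsto g_0x$ is a homeomorphism of $(X,top(\U))$, a basic $\U$-neighborhood $O$ of $x_0$ gives a $\U$-neighborhood $g_0 O$ of $g_0x_0$, hence contains $\bigcap_{k}\{z:d_{i_k}(g_0x_0,z)<r\}$ for suitable $i_1,\dots,i_n,r$. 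If $d_{A,i_k}(x_0,y)<r$ for each $k$ then in particular $d_{i_k}(g_0x_0,g_0y)<r$ (as $g_0\in A$), so $g_0y\in g_0O$ and $y\in O$. Thus the $\xi$-neighborhood $\bigcap_k\{y:d_{A,i_k}(x_0,y)<r\}$ lies in $O$, giving $top(\U)\subseteq top(\xi)$.
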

\begin{proof}
	(1) and (3) are straightforward.
	
	(2) Observe that if $A \subset B$ then $d_{A,i}(x,y) \leq d_{B,i}(x,y)$. Therefore, if $VA \subseteq B$ then
	$$
	d_{A,i}(vx,vy) =sup_{g \in A} d(gvx,gvy) \leq  sup_{t \in AV} d(tx,ty) \leq sup_{t \in B} d(tx,ty) = d_{B,i}(x,y). 
	$$
	For every triple $A \in M, i \in I, \eps >0$ (such triples control the natural uniform subbase of $\xi$), we have 
	$$
	d_{B,i}(x,y) < \eps \Rightarrow d_{A,i}(vx,vy) < \eps \ \ \forall v \in V.
	$$ 
\end{proof}


\begin{thm} \label{l:U} \emph{(de Vries \cite{Vr-LinComp82,Vr-Revis00} and also \cite{Me-diss85})}    
	Let $G \times X \to X$ be a continuous action.
	Suppose that a neighborhood $U$ of $e$
	acts equicontinuously on $X$ with respect to some compatible uniformity $\U$. Then 
	$X$ is $G$-Tychonoff. 
\end{thm}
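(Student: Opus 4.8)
The plan is to reduce everything to Theorem \ref{t:Tyc}. Since $X$ carries a (Hausdorff) compatible uniformity, it is Tychonoff, so by the implication (3)$\Rightarrow$(1) of that theorem it suffices to produce on $X$ a topologically compatible uniform structure which is quasibounded. I would manufacture such a structure by feeding the right data into Lemma \ref{l:Sigma}, which is tailor-made for this purpose.

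Concretely, I would first replace the generating pseudometrics of $\U$ by their truncations $\min(d_i,1)$, so that $\Sigma=\{d_i\}_{i\in I}$ is a \emph{bounded} system of pseudometrics still inducing $\U$ (this is harmless and changes neither the uniformity nor the topology). Then I would apply Lemma \ref{l:Sigma} to the family of finite powers of the equicontinuous neighborhood,
$$
M:=\{U^n : n\ge 1\}, \qquad U^n=\underbrace{U\cdots U}_{n}\subseteq G,
$$
and take $\xi:=\xi(\Sigma,M)$ as the candidate uniformity. Because $\Sigma$ is bounded, the standing finiteness requirement $d_{A,i}(x,y)=\sup_{g\in A}d_i(gx,gy)<\infty$ is automatic for every $A\in M$.

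Next I would verify the three hypotheses of Lemma \ref{l:Sigma}. Condition (2), absorption, is immediate: for $A=U^n$ take $V:=U\in N_e$ and $B:=U^{n+1}\in M$, so that $AV=U^nU=U^{n+1}=B$; hence Lemma \ref{l:Sigma}(2) yields that the action is $\xi$-quasibounded. Condition (3) holds since $e\in U=U^1\in M$, so Lemma \ref{l:Sigma}(3) gives $\U\subseteq\xi$, i.e. $top(\U)\subseteq top(\xi)$. The remaining and decisive point is condition (1): that \emph{every} power $U^n$ acts equicontinuously on $(X,\U)$. Granting it, Lemma \ref{l:Sigma}(1) forces $top(\xi)=top(\U)$, so $\xi$ is a compatible quasibounded uniformity, and Theorem \ref{t:Tyc} closes the argument.

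The main obstacle is exactly condition (1): propagating equicontinuity from $U$ to all its powers. I would attempt the implication ``$U^{n}$ equicontinuous $\Rightarrow U^{n+1}$ equicontinuous'' by induction, writing $u_1\cdots u_{n+1}x=u_1(u_2\cdots u_{n+1}x)$ and combining the equicontinuity of the inner $U^{n}$-family with that of the outer $U$-family. The delicate issue — and the place where the continuity of the action and the fact that $U$ is a \emph{neighborhood} of $e$ must enter essentially — is that the outer factor has to be controlled uniformly over the \emph{moving} base points $U^{n}x_0$, which pointwise equicontinuity alone does not supply. I expect the cleanest route to be to first upgrade the uniformity: to produce a compatible uniform structure on $X$ for which some neighborhood of $e$ acts \emph{uniformly} equicontinuously (Definition \ref{d:equic}(2)). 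Unlike its pointwise counterpart, uniform equicontinuity is stable under composition, so it controls all powers simultaneously; moreover, by comparing Definitions \ref{d:equic}(2) and \ref{d:qb}(4), uniform equicontinuity of a neighborhood of $e$ is already quasiboundedness, which would give the desired compatible quasibounded uniformity directly. The technical heart of the proof is this uniformization step.
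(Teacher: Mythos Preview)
Your overall strategy---reduce to Theorem \ref{t:Tyc} by manufacturing a compatible quasibounded uniformity via Lemma \ref{l:Sigma}---is exactly the paper's. The gap is your choice of $M$. With $M=\{U^n:n\ge 1\}$ you are forced into the very difficulty you diagnose: pointwise equicontinuity of $U$ does not propagate to $U^n$, and the ``uniformization step'' you offer as a cure is left as a black box (you say what it should achieve, not how to perform it). You have correctly located the obstacle but not removed it.

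The paper sidesteps the obstacle by choosing $M$ differently. Pick a sequence of symmetric neighborhoods $U_n\in N_e$ with $U_{n+1}^2\subset U_n\subset U$ (arranging also $U_1^2\subset U$), and set
\[
V_n:=U_1U_2\cdots U_n,\qquad M:=\{V_n:n\in\N\}.
\]
A one-line induction (show $U_{k+1}\cdots U_n\subset U_k$ using $U_{k+1}^2\subset U_k$) gives $V_n\subset U_1^2\subset U$ for every $n$, so each $V_n$ inherits equicontinuity directly from $U$ and condition (1) of Lemma \ref{l:Sigma} is free. Condition (2) is still automatic because $V_nU_{n+1}=V_{n+1}\in M$, and (3) holds since $e\in V_1$. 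The telescoping trick is exactly what reconciles the two requirements that pull against each other in your choice: absorption $AV\subset B$ wants the sets in $M$ to grow, while condition (1) wants them to stay inside $U$. Products of a geometrically shrinking sequence do both; iterated powers of a fixed $U$ do not.
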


\begin{proof} 
	By Theorem \ref{t:Tyc},  it is enough to show that there exists a compatible finer uniformity $\xi \supseteq \U$ on the topological space $X$ which is quasibounded. 
	
	By our assumption there exists a neigborhood $U$ of $e$ in $G$ and a compatible uniformity $\xi$ on the topological space $X$
	such that $U$ acts equicontinuously on $(X,\U)$. Choose a sequence $U_n \in N_e$ such that $U_n^{-1}=U_n$, $U_{n+1}^2 \subset U_n \subset U$ for every $n \in \N$.
	Now define inductively the sequence $M:=\{V_n\}_{n \in \N}$ of subsets in $G$ where 
	$$V_n:=U_1U_2 \cdots U_n.$$
	
	Choose also a family of pseudometrics $\Sigma:=\{d_i\}_{i \in I}$ on $X$
	such that $\Sigma$ generates the uniformity $\U$. One may assume that $d_i \leq 1$ for every $i$.
	Now we define the uniformity $\xi$ as in Lemma \ref{l:Sigma} generated by the system of pseudometrics $\Sigma_M$.
\end{proof}

\sk 
For every Tychonoff space $X$ there exists the greatest compatible uniformity on $X$. We denote it 
by $\U_{max}$.

\begin{thm} \label{c:LCqB} \cite{Me-diss85}
	Let $G$ be a locally compact group. Then for every $G$-space $X$ we have $(X,\U_{max}) \in \rm{Unif}^G$.
\end{thm}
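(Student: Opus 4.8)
The plan is to verify the two defining properties of $\mathrm{Unif}^G$ for the fine uniformity $\U_{max}$ (Definition \ref{d:qb}): that it is \emph{saturated} and that it is \emph{quasibounded}. Throughout I use the classical fact that on a Tychonoff space the maximal compatible uniformity $\U_{max}$ is generated by the family of all bounded continuous pseudometrics, so that the sets $\{(x,y): d(x,y) < \alpha\}$, with $d$ a bounded continuous pseudometric and $\alpha>0$, form a base of $\U_{max}$.

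Saturatedness is formal and requires no local compactness. Fix $g \in G$; since the action is continuous, the translation $\pi^g \colon X \to X$ is a homeomorphism with inverse $\pi^{g^{-1}}$. Hence $\{g\eps : \eps \in \U_{max}\}$ is again a uniformity on $X$ compatible with the topology of $X$, so by maximality it is contained in $\U_{max}$; that is, $g\eps \in \U_{max}$ for every $\eps \in \U_{max}$ and every $g \in G$, which is precisely saturatedness.

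For quasiboundedness I would invoke local compactness of $G$ to fix once and for all a \emph{compact} neighborhood $V \in N_e$. Given a basic entourage $\eps = \{(x,y): d(x,y) < \alpha\}$ coming from a bounded continuous pseudometric $d$, set
$$
D(x,y) := \sup_{v \in V} d(vx, vy).
$$
This is a pseudometric (symmetry and the triangle inequality pass to the supremum), it is finite since $d$ is bounded, and the heart of the matter is to check that $D$ is \emph{continuous} on $X \times X$. Once this is established, $D$ belongs to the generating family of $\U_{max}$, so $\delta := \{(x,y): D(x,y) < \alpha\} \in \U_{max}$; and by construction $(x,y) \in \delta$ forces $d(vx,vy) < \alpha$, i.e. $(vx,vy) \in \eps$, for \emph{all} $v \in V$ simultaneously. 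Since the same compact $V$ serves for every $\eps$, this shows that $V$ acts uniformly equicontinuously on $(X, \U_{max})$ (Definition \ref{d:equic}); combining this with saturatedness, Remark \ref{r:simple-qb}.3 yields $(X, \U_{max}) \in \mathrm{Unif}^G$. Equivalently, one may feed the same data into Lemma \ref{l:Sigma}, taking $\Sigma$ to be the family of all bounded continuous pseudometrics and $M$ the family of compact subsets of $G$.

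The main obstacle is exactly the continuity of $D$, and this is the only step where compactness of $V$ (hence local compactness of $G$) is used. I would prove it by a tube-lemma argument: the map $V \times X \to \R$, $(v,x) \mapsto d(vx, vx_0)$ is continuous and vanishes identically on $V \times \{x_0\}$, so compactness of $V$ gives, for each $\eta>0$, a neighborhood $O$ of $x_0$ with $\sup_{v\in V} d(vx, vx_0) < \eta$ for all $x \in O$. Feeding this (for both variables) into the estimate $|D(x,y) - D(x_0,y_0)| \le \sup_{v\in V} d(vx,vx_0) + \sup_{v\in V} d(vy,vy_0)$ gives continuity of $D$ at an arbitrary $(x_0,y_0)$. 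Without compactness of $V$ this supremum need not be continuous (indeed need not even be finite), which is consistent with the essential role of local compactness emphasized in the introduction.
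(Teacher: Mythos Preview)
Your argument is correct and rests on the same core idea as the paper: use a compact neighborhood $V\in N_e$ and the sup-pseudometric $D(x,y)=\sup_{v\in V}d(vx,vy)$, which is precisely the $d_{V,i}$ of Lemma~\ref{l:Sigma}, together with a tube-lemma argument for its continuity. The paper's proof is terser and slightly more indirect: it invokes the proof of Theorem~\ref{l:U} (hence Lemma~\ref{l:Sigma}) to manufacture a quasibounded compatible uniformity $\xi\supseteq\U_{max}$ and then appeals to maximality to force $\xi=\U_{max}$, whereas you observe directly that $D$ is already a bounded continuous pseudometric, so $\{D<\alpha\}\in\U_{max}$ and $V$ acts uniformly equicontinuously on $(X,\U_{max})$; Remark~\ref{r:simple-qb}(3) then finishes without the detour through an auxiliary uniformity.
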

\begin{proof} According to the proof of Theorem \ref{l:U} there exists a compatible finer uniformity $\xi \supseteq \U_{max}$ on the topological space $X$ which is quasibounded. 
	Then by the maximality of $\U_{max}$ we have $\xi = \U_{max}$. Therefore $\U_{max}$ is quasibounded. In fact, again by the maximality property we obtain that $\U_{max}$ is also saturated. Hence, $(X,\U_{max}) \in \rm{Unif}^G.$  	
\end{proof}

Combining Theorems \ref{t:Tyc} and \ref{c:LCqB}, one directly gets the following well-known important result of de Vries: 

\begin{f} \label{f:VriesThm} \emph{(J. de Vries \cite{Vr-loccom78})}   
	Let $G$ be a locally compact group. Then every Tychonoff $G$-space is $G$-compactifiable. 
\end{f}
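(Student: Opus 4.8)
The plan is to derive this as a direct corollary of Theorems \ref{t:Tyc} and \ref{c:LCqB}, so that all the substantive work—and the single essential use of local compactness—has already been discharged in those two results. No new construction is needed at this level.

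First I would fix an arbitrary Tychonoff $G$-space $X$ with $G$ locally compact, and let $\U_{max}$ denote the greatest compatible uniformity on $X$ (which exists for every Tychonoff space). Theorem \ref{c:LCqB} asserts exactly that, under local compactness of $G$, one has $(X,\U_{max}) \in \rm{Unif}^G$. This is nothing more than an instance of condition (2) in Theorem \ref{t:Tyc}, with the witnessing compatible uniformity taken to be $\U := \U_{max}$.

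Then I would invoke the implication (2) $\Rightarrow$ (1) of Theorem \ref{t:Tyc}, which immediately yields that $X$ is $G$-Tychonoff, i.e., $G$-compactifiable. Since $X$ was an arbitrary Tychonoff $G$-space, this establishes the Fact.

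There is essentially no obstacle remaining at the level of this statement: the argument is a one-line composition of the two theorems. The genuine difficulty lies one level deeper, inside the proof of Theorem \ref{c:LCqB}, where local compactness is indispensable—it is what supplies a neighborhood of $e$ acting equicontinuously, so that the construction in the proof of Theorem \ref{l:U} produces a quasibounded compatible uniformity $\xi \supseteq \U_{max}$, whereupon maximality forces $\xi = \U_{max}$ and hence makes $\U_{max}$ itself quasibounded and saturated. For the present Fact, that content is already available as a black box, so nothing further is required.
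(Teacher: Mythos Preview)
Your proof is correct and is precisely the paper's own argument: the paper derives Fact~\ref{f:VriesThm} in one line by combining Theorems~\ref{t:Tyc} and~\ref{c:LCqB}, exactly as you do. Your additional commentary on where local compactness enters inside Theorem~\ref{c:LCqB} is accurate but already supererogatory at this level.
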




\sk 
\begin{f}  \label{f:suff} 
	Here we list several sufficient conditions of $G$-compactifiability. Some of these results were already mentioned above. 
	
	\begin{enumerate} 
	
			\item \cite{Vr-can75} Every coset $G$-space $G/H$. 
			\item \cite{Vr-can75} Every locally compact $G$-space $X$. 
			\item \cite{Vr-loccom78} Every $G$-space $X$, where $G$ is locally compact.  
		\item \cite{Me-sing89,Me-FreeTGr96}  
		Let $G$ and $X$ both be topological groups and $\a \colon G \times X \to X$ is a continuous action by group automorphisms. Then $X$ is $G$-Tychonoff (see Remark \ref{r:simple-qb}.4 and Corollary \ref{c:aut}). 
		\item 	\cite{Me-sing89}  For every metric $G$-space $(X,d)$, where $G$ is a Baire space and every $g$-translation is $d$-uniform we have $(X,\U(d)) \in \rm{Unif}^G$ (and $X$ is $G$-compactifiable). 
		\item 	\cite{Me-sing89}  If $G$ is Baire then every \textbf{metrizable} $G_{discr}$-compactification of a $G$-space $X$ is a $G$-compactification. 
		\item \cite{Usp-Dugundji} Every $G$-space $X$, where the action is algebraically transitive, $X$ is Baire and $G$ is $\aleph_0$-bounded. More generally, every d-open action. 
		\item \cite{Lud-Vr80} Every metric space $X$ with a $G$-invariant metric. More generally, every continuous uniformly equicontinuous action of a topological group $G$ on $(X,\U)$. 
		\item \label{l:U} \emph{(\cite{Vr-LinComp82,Vr-Revis00} and also \cite{Me-diss85})}    
		Let $G \times X \to X$ be a continuous action.
		Suppose that a neighborhood $U$ of $e$
		acts equicontinuously on $X$ with respect to some compatible uniformity $\U$. Then 
		$X$ is $G$-Tychonoff. 
		\item \emph{(Theorem \ref{t:OrdComp} below)} Every ordered $G_{discr}$-compactification of a $G$-space $X$ is a $G$-compactification. 
	\end{enumerate}
\end{f}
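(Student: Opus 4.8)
The unifying idea is that every item reduces, through Theorem \ref{t:Tyc}, to producing on $X$ a compatible uniformity $\U$ with $(X,\U)\in\rm{Unif}^G$ (equivalently, a compatible quasibounded uniformity), or else to producing directly a compatible $G$-proximity and invoking the equivariant Smirnov theorem (Fact \ref{th:Sm.thmG}). Four of the items need no fresh argument: (3) is precisely Fact \ref{f:VriesThm}; (8) is Corollary \ref{c:LudVr}; (9) is Theorem \ref{l:U}; and (10) is the forthcoming Theorem \ref{t:OrdComp}. So the plan is to dispatch (1), (2), (4) by exhibiting the correct structure, and to isolate (5), (6), (7) as the items carrying the real work.

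For the structural items I proceed as follows. On a coset space $G/H$ (item (1)) the cleanest route is Remark \ref{r:right}, where the relation $\dl_R$ is shown to be a compatible $G$-proximity; Fact \ref{th:Sm.thmG} then yields the corresponding $G$-compactification. (Equivalently, the canonical $G$-invariant uniformity of $G/H$ makes the action lie in $\rm{Unif}^G$, so Theorem \ref{t:Tyc} applies.) Item (2) is immediate from the Alexandrov remark preceding the statement: on a locally compact $G$-space the one-point proximity $\dl_a$ is $G$-invariant and compatible with the action, hence a $G$-proximity, and Fact \ref{th:Sm.thmG} then supplies the one-point $G$-compactification. Item (4) follows from Remark \ref{r:simple-qb}.4: for a continuous action by automorphisms on a topological group $X$, any of the right, left, two-sided or Roelcke uniformities lies in $\rm{Unif}^G$, so Theorem \ref{t:Tyc} gives $G$-Tychonoffness.

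The substantive items are (5), (6), (7), where local compactness is replaced by the far weaker Baire (or $\aleph_0$-bounded) hypothesis. For item (5) I equip $(X,d)$ with the metric uniformity $\U(d)$; saturation is immediate since each $g$-translation is $d$-uniform, so the only point is quasiboundedness. Here I run a Baire-category argument: fixing $\eps>0$, the sets $F_n=\{g\in G:\ d(gx,gy)\le \eps/2 \text{ whenever } d(x,y)\le 1/n\}$ are closed (as $g\mapsto d(gx,gy)$ is continuous) and cover $G$ (each translation is $d$-uniform), so by the Baire property some $F_N$ has nonempty interior, say $W\ni g_0$. Since the single inverse translation $g_0^{-1}$ is itself $d$-uniform, composing it with the members of $W$ converts the uniform estimate on $W$ into one on the neighborhood $V:=g_0^{-1}W$ of $e$, yielding the $V\in N_e$ and $\delta:=1/N$ required by the quasiboundedness clause of Definition \ref{d:qb}; Theorem \ref{t:Tyc} then concludes. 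Item (6) is of the same spirit: a metrizable $G_{discr}$-compactification already gives a $G$-invariant proximity, so by Fact \ref{th:Sm.thmG} it remains to verify the compatibility-with-the-action clause, which amounts to upgrading the separately continuous $G_{discr}$-action on the compact metric space $Y$ to a jointly continuous $G$-action; this I obtain by a Namioka-type category argument in which metrizability of $Y$ lets the Baire property of $G$ take effect. Item (7) lies slightly outside the present uniform machinery, and I settle it by citing \cite{Usp-Dugundji}: d-openness together with $X$ Baire and $G$ $\aleph_0$-bounded yields a microtransitivity-type property out of which a compatible equiuniformity is built.

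I expect the Baire step in item (5), and its twin in item (6), to be the main obstacle: this is exactly where the automatic uniform estimates of the locally compact setting must be recovered from a pure category hypothesis, and all the delicate bookkeeping — moduli of continuity, and sliding a residual neighborhood back to $e$ — is concentrated there.
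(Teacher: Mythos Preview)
The paper does not prove this Fact: it is a list of cited results, with items (1)--(4) and (8)--(10) justified elsewhere in the text exactly as you identify (the remarks on $\dl_R$ and $\dl_a$, Remark \ref{r:simple-qb}.4, Fact \ref{f:VriesThm}, Corollary \ref{c:LudVr}, Theorem \ref{l:U}, Theorem \ref{t:OrdComp}), and items (5)--(7) left entirely to the references \cite{Me-sing89} and \cite{Usp-Dugundji}. Your dispatch of the structural items is correct and matches the paper; for (5)--(7) you go further than the paper does.

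For item (5), your Baire-category sketch has the right architecture but a genuine slip in the translation step. With $V:=g_0^{-1}W$ and $\delta:=1/N$, a typical $v=g_0^{-1}w$ gives $d(vx,vy)=d\bigl(g_0^{-1}(wx),\,g_0^{-1}(wy)\bigr)$; from $d(x,y)\le 1/N$ you know only $d(wx,wy)\le\eps/2$, and applying $g_0^{-1}$ to a pair at distance $\eps/2$ does \emph{not} return a pair at distance $\le\eps$ --- the modulus of $g_0^{-1}$ runs the wrong way, and since $g_0$ itself was produced by the Baire step for this particular $\eps$ you cannot repair it by shrinking $\eps$ a priori. The fix is to translate on the other side: set $V:=Wg_0^{-1}$, so that $v=wg_0^{-1}$ acts by $vx=w(g_0^{-1}x)$; now use the uniform continuity of $g_0^{-1}$ to choose $\delta>0$ with $d(x,y)\le\delta\Rightarrow d(g_0^{-1}x,g_0^{-1}y)\le 1/N$, and then $w\in W\subset F_N$ yields $d(vx,vy)\le\eps/2<\eps$. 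With this correction the argument closes. Incidentally, once (5) is in hand, (6) follows more cheaply than via a Namioka-type argument: pull the compact metric back from $Y$ to $X$ along $\alpha$; each $g$-translation on $Y$ is uniformly continuous by compactness, so (5) applied to $(X,d)$ gives $(X,\U(d))\in\rm{Unif}^G$ with $\U(d)$ totally bounded, and Corollary \ref{c:TotBoundqb} then makes $\alpha$ a $G$-compactification.
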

 
 \sk
 \subsection{Linearly ordered $G$-compactifications}

 In this section, 
a linearly ordered topological space (LOTS) $X$ will mean that $X$ is a topological space which topology is just the interval topology for some linear order on $X$. 
 We show that every linearly ordered $G_{discr}$-compactification of a $G$-space $X$ with the interval topology is necessarily a $G$-compactification. Recall the following result of V. Fedorchuk which gives an analog of 
 Smirnov's theorem for linearly ordered compactifications. 
 
 \begin{defin} \label{d:fed} \cite{Fed68} 
 	Let $\leq$ be a linear order on $X$.  
  	A proximity $\dl$ on $X$ is said to be an \emph{ordered proximity} 
  	(with respect to $\leq$)  if $\dl$ induces the interval topology $\tau_{\leq}$ on $X$ and the following two properties are satisfied: 
 	\begin{itemize}
 		\item [(op1)] for every $x < y$ we have $(-\infty,x] \ \overline{\dl} \  [y,+\infty)$;   		
 		\item [(op2)] for every $A \ \overline{\dl} \ B$ there exists a finite number $O_i$,  $i \in \{1,2,\cdots,n\}$ of open $\leq_X$-convex subsets \footnote{as usual, $C$ is said to be convex if $a,b \in C$ implies that the interval $(a,b)$ is a subset of $C$} 
 		such that 
 		$$
 		A \subset \cup_{i=1}^n O_i \subset X \setminus B.
 		$$ 
 	\end{itemize} 
 \end{defin}

 \begin{f} \label{f:Fed} \emph{(V. Fedorchuk \cite{Fed68})}   Let $\alpha \colon X \to Y$ be a compactification of a LOTS $X$ and 
 	 $\dl$ be the corresponding proximity on $X$. The 
 	 following conditions are equivalent:
 	 \begin{enumerate}
 	 	\item There exists a linear order $\leq_Y$ on $Y$ such that 
 	 	$Y$ is LOTS. 
 	 	\item The proximity $\dl$ of $\a$ is an ordered proximity with respect to 
 	 	the linear order $\leq_X$ on $X$ inherited from $\leq_Y$. 
 	 \end{enumerate} 
 \end{f}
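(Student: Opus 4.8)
The plan is to run both implications through the closures in $Y$ of the order-rays of $X$. For $x\in X$ put $D_x:=\mathrm{cl}_Y(\alpha((-\infty,x]))$ and $U_x:=\mathrm{cl}_Y(\alpha([x,+\infty)))$. Since $\alpha(X)$ is dense and $(-\infty,x]\cup[x,+\infty)=X$, we always have $D_x\cup U_x=Y$, and from $(-\infty,x]\subseteq(-\infty,x']$ (for $x\le_X x'$) we get the nestings $D_x\subseteq D_{x'}$ and $U_{x'}\subseteq U_x$. In these terms (op1) reads exactly $D_x\cap U_y=\emptyset$ whenever $x<_X y$. For $(1)\Rightarrow(2)$: as $\alpha$ is an embedding of the LOTS $X$, the proximity $\delta$ already induces $\tau_{\le_X}$, so only (op1) and (op2) need checking. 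Because $\le_X$ is inherited from $\le_Y$ we have $\alpha((-\infty,x])\subseteq(-\infty,\alpha(x)]_{\le_Y}$, and in the LOTS $Y$ this latter ray is closed; hence $D_x\subseteq(-\infty,\alpha(x)]_{\le_Y}$ and dually $U_y\subseteq[\alpha(y),+\infty)_{\le_Y}$, and for $x<_X y$ these two closed rays are disjoint, which is (op1). For (op2), if $A\ \overline{\delta}\ B$ then $\mathrm{cl}_Y\alpha(A)$ and $\mathrm{cl}_Y\alpha(B)$ are disjoint compacta; cover the first by finitely many basic open intervals $I_1,\dots,I_n$ of $Y$, each disjoint from the second, and set $O_i:=\alpha^{-1}(I_i)$. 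Each $O_i$ is open and $\le_X$-convex (a convex subset of $Y$ meets $\alpha(X)$ in a convex set), and $A\subseteq\bigcup_i O_i\subseteq X\setminus B$, which is (op2).

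For $(2)\Rightarrow(1)$, take $Y=c_\delta X$ to be the Smirnov compactification of $(X,\delta)$. For $p\in Y$ write $\widehat D(p):=\{x:p\in D_x\}$ and $\widehat U(p):=\{x:p\in U_x\}$; by the nestings $\widehat D(p)$ is an up-set and $\widehat U(p)$ a down-set of $X$, so any two of them are comparable by inclusion. Define
$$p\le_Y q\ \Longleftrightarrow\ \widehat D(q)\subseteq\widehat D(p)\ \text{and}\ \widehat U(p)\subseteq\widehat U(q).$$
Reflexivity and transitivity are immediate. Totality is where (op1) enters: given $p,q$, comparability of the up-sets lets us assume $\widehat D(q)\subseteq\widehat D(p)$, and the only way to avoid both $p\le_Y q$ and $q\le_Y p$ would be to have simultaneously $\widehat D(q)\subsetneq\widehat D(p)$ and $\widehat U(q)\subsetneq\widehat U(p)$; choosing witnesses this yields indices $s<_X t$ together with a point (namely $p$) in $D_s\cap U_t$, contradicting (op1). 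Finally $\alpha$ preserves and reflects order: one computes $\widehat D(\alpha(a))=[a,+\infty)$ and $\widehat U(\alpha(a))=(-\infty,a]$ (again using (op1)), so $a<_X b$ gives $\alpha(a)<_Y\alpha(b)$, and $\le_Y$ restricts to $\le_X$ on $\alpha(X)$.

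Antisymmetry of $\le_Y$ and the identity $\tau_{\le_Y}=\tau_Y$ are both consequences of a single fact extracted from (op2):
$$(\ast)\qquad\tau_Y\ \text{has a basis of}\ \le_Y\text{-convex open sets.}$$
Granting $(\ast)$: distinct points $p\ne q$ then lie in disjoint $\le_Y$-convex open sets, which (being convex and missing the other point) forces $(\widehat D(p),\widehat U(p))\ne(\widehat D(q),\widehat U(q))$, i.e. antisymmetry; moreover $(\ast)$ shows every $\le_Y$-ray is $\tau_Y$-open, because for $r>_Y p$ a $\le_Y$-convex $\tau_Y$-open set containing $r$ but not $p$ must lie entirely in $(p,+\infty)_{\le_Y}$. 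Thus $\tau_{\le_Y}\subseteq\tau_Y$; since $(Y,\tau_Y)$ is compact and the order topology $\tau_{\le_Y}$ of the linear order $\le_Y$ is Hausdorff, the identity $(Y,\tau_Y)\to(Y,\tau_{\le_Y})$ is a continuous bijection of a compact space onto a Hausdorff space, hence a homeomorphism, whence $\tau_{\le_Y}=\tau_Y$. Therefore $Y$ is a compact LOTS whose order restricts to $\le_X$, which is $(1)$.

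The main obstacle is establishing $(\ast)$ — transferring the order-convex screening of (op2) from $X$ to its compactification $Y$. For a point $r$ and a $\tau_Y$-open $W\ni r$ one produces far sets $A\ \overline{\delta}\ B$ with $r\in\mathrm{cl}_Y\alpha(A)$ and with $Y\setminus W$ essentially the closure of $\alpha(B)$, applies (op2) to obtain $\le_X$-convex open $O_1,\dots,O_n$ with $A\subseteq\bigcup_i O_i\subseteq X\setminus B$, and then argues that the $Y$-trace of the convex piece $O_{i_0}$ whose closure captures $r$ supplies a $\le_Y$-convex $\tau_Y$-open neighbourhood of $r$ inside $W$. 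The delicate case is $r\in Y\setminus\alpha(X)$ lying over a gap or cut of $X$ not realized by any single threshold $x$; there no single pair $D_x,U_x$ separates $r$ from nearby points, and it is precisely the \emph{finite union} of genuinely convex sets furnished by (op2), rather than one ray, that yields the required convex neighbourhood and makes $\le_Y$ a genuine compatible linear order.
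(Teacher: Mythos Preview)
The paper does not prove this statement: it is recorded as a \emph{Fact} with a citation to Fedorchuk \cite{Fed68} and is then used as a black box in the proof of Theorem~\ref{t:OrdComp}. There is therefore no proof in the paper to compare your attempt against.

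On the merits of your attempt itself: the direction $(1)\Rightarrow(2)$ is clean and correct. For $(2)\Rightarrow(1)$ your framework via the ray-closures $D_x,U_x$ and the induced preorder on $Y$ is sensible, and the reduction of both antisymmetry and $\tau_{\le_Y}=\tau_Y$ to the single claim $(\ast)$ is a good organizing move. However, you do not actually prove $(\ast)$; your final paragraph is an outline that leaves the decisive step open. Concretely, from (op2) you obtain $\le_X$-convex open sets $O_i$ in $X$ with $A\subseteq\bigcup_i O_i\subseteq X\setminus B$ and you pick $i_0$ with $r\in\mathrm{cl}_Y\alpha(O_{i_0})$. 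But $\mathrm{cl}_Y\alpha(O_{i_0})$ is closed, not open, so it is not the desired neighbourhood; and $O_{i_0}\subseteq X\setminus B$ does not by itself force $\mathrm{cl}_Y\alpha(O_{i_0})\subseteq W$, since disjoint sets can have intersecting closures. You also need to argue that the $Y$-object you eventually produce is $\le_Y$-convex, which requires relating $\le_X$-convexity of $O_{i_0}$ to the $D_x,U_x$ description of $\le_Y$ --- this is where the endpoints of $O_{i_0}$ (or the absence thereof, in the ``gap'' case you flag) must be analysed via (op1). As written, the argument for $(\ast)$ is a plausible sketch rather than a proof, so the implication $(2)\Rightarrow(1)$ is incomplete.
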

 
 Note that if $\a \colon X \to Y$ is a compactification, where $(Y,\tau)$ is compact with respect to some linear order $\leq_Y$ on $Y$ (i.e., $\tau=\tau_{\leq_Y}$), then 
 the subspace topology on $X$, in general, is stronger than the interval topology of the inherited order $\leq_X$ on $X$. The coincidence $\tau_{\leq_X}=\tau|_X$ we have iff the proximity $\dl$ of $\a$ is an ordered proximity. 

 \begin{thm} \label{t:OrdComp}  
 	Let $X$ be a linearly ordered space with the interval topology of a linear order $\leq$. 
 	Let $G \times X \to X$ be a continuous action which preserves the order $\leq$. Assume that  
 	$\dl$ is an ordered proximity of a linearly ordered compactification $\a \colon (X,\leq) \to (Y,\leq_Y)$ such that $\a$ is a $G_{discr}$-compactification (i.e., $\dl$ is $G$-invariant). Then $\a$ is a $G$-compactification.  
 \end{thm}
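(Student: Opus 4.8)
The plan is to deduce that $\a$ is a $G$-compactification from the equivariant Smirnov criterion (Fact \ref{th:Sm.thmG}). Since $\a$ is assumed to be a $G_{discr}$-compactification, its proximity $\dl$ is already $G$-invariant, so by Remark \ref{r:semigr} it remains only to verify the compatibility condition in its strong form $(2^{str})$: for every pair $A \rem B$ one must produce $V \in N_e$ with $VA \rem VB$. Throughout I identify $X$ with its dense image in $Y$; note that because $\dl$ is an ordered proximity, the interval topology $\tau_{\leq}$ coincides with the subspace topology inherited from $Y$ (see the discussion after Fact \ref{f:Fed}), and each $g \in G$ extends to an order-preserving self-homeomorphism of the compact LOTS $Y$.

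First I would reduce to a single ``interval versus gap'' configuration. Given $A \rem B$, axiom (op2) yields finitely many open convex sets whose union $W$ satisfies $A \subseteq W \subseteq X \setminus B$; merging overlaps, I may assume these are pairwise disjoint open intervals $O_1 < \cdots < O_n$, so that $X \setminus W$ splits into convex ``gaps'' $J_0 < J_1 < \cdots < J_n$ lying below $O_1$, between consecutive $O_i$, and above $O_n$, with $B \subseteq \bigcup_j J_j$. Since $VA \rem VB$ follows from $V(A \cap O_i) \rem V(B \cap J_j)$ for all finitely many pairs $(i,j)$ (by (P4), after intersecting the finitely many neighborhoods $V_{ij}$ obtained for each pair), it suffices to treat one pair. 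Replacing $A,B$ by $A \cap O_i$ and $B \cap J_j$, I am reduced to $A \subseteq O$ for a single open interval $O$ and $B \subseteq J$ for a single gap $J$ lying entirely on one side of $O$, say below it (the other side being symmetric); here $A \rem B$ still holds by (P4). In $Y$ set $\eta := \max \cl_Y B$ and $\zeta := \min \cl_Y A$; from $A \rem B$ together with $B$ lying below $A$ one obtains $\eta < \zeta$ in $Y$.

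The engine of the argument is that order-preservation turns control of the infinite sets $A,B$ into control of the orbit of a single point: if $w \in X$ is a lower bound of $A$ then $vw \leq va$ for all $v \in G$ and $a \in A$, so $VA$ lies above the orbit $Vw$, and dually an upper bound of $B$ controls $VB$ from above. Hence my goal becomes: find points $x_0 < y_0$ of $X$ separating the two sides of the cut $(\eta,\zeta)$ together with $V \in N_e$ for which the orbits keep $VB \subseteq (-\infty,x_0]$ and $VA \subseteq [y_0,+\infty)$; once this is achieved, (op1) gives $(-\infty,x_0] \rem [y_0,+\infty)$ and monotonicity (P4) yields $VA \rem VB$. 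Concretely, when $(\eta,\zeta)_Y$ contains enough points of $X$ I would choose $m_1 < x_0 < y_0 < m_2$ there (by density of $X$), note $B < m_1$ and $A > m_2$, and use continuity of $G \times X \to X$ at $(e,m_1)$ and $(e,m_2)$ to pick $V \in N_e$ forcing $Vm_1 \subseteq (-\infty,x_0)$ and $Vm_2 \subseteq (y_0,+\infty)$; combined with the orbit trick this gives exactly the two inclusions.

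The main obstacle is precisely the degenerate boundary behaviour at the cut: $(\eta,\zeta)_Y$ may be empty (a jump) or too small, so that no separating points of $X$ sit strictly between $\cl_Y B$ and $\cl_Y A$. This is where the two standing hypotheses are indispensable. The coincidence $\tau_{\leq} = \tau|_X$ (valid because $\dl$ is an ordered proximity) excludes ``one-sided'' jumps in which exactly one of $\eta,\zeta$ lies in $X$, since such a configuration would make the interval and subspace topologies disagree; thus a jump $\eta < \zeta$ forces either both endpoints into $X$ or a genuine two-sided gap, and an isolated separating point of $Y$ is even fixed by a small $V$. In each boundary case the $G$-invariance of $\dl$ is what keeps the orbits on the correct side of the cut: $G$ acts on $Y$ by order-homeomorphisms, hence preserves the adjacency defining the jump, and continuity on $X$ then pins the orbits of the boundary points to their own side, restoring $VB \subseteq (-\infty,x_0]$ and $VA \subseteq [y_0,+\infty)$ (with $x_0=\eta$, $y_0=\zeta$ in the two-sided jump). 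I expect this analysis at jumps and gaps, rather than the generic interior case, to be the genuinely delicate part of the proof.
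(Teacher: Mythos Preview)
Your overall strategy---invoke Smirnov's criterion (Fact~\ref{th:Sm.thmG}), decompose via (op2) into finitely many convex pieces, and then exploit order-preservation to replace control of an infinite set by control of a single boundary point via continuity of the action---is exactly the paper's. The execution, however, diverges in two ways that make things harder than necessary.

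First, you aim for the strong condition $(2^{str})$, i.e.\ $VA \rem VB$, whereas the paper is content with the weak form $UA \cap B = \emptyset$ (equivalently $UA \cap UB = \emptyset$), which already suffices for Fact~\ref{th:Sm.thmG}. Second---and this is what generates your acknowledged difficulties---you pass to $Y$ and take as control points $\eta = \max \cl_Y B$ and $\zeta = \min \cl_Y A$, which need not lie in $X$; that is precisely why you are forced into a jump/gap case analysis you do not complete (for the two-sided gap $\eta,\zeta \notin X$ you cannot set $x_0=\eta$, $y_0=\zeta$, and you have no point of $X$ at which to invoke continuity of the action). The paper sidesteps this entirely by staying in $X$: after the (op2) decomposition it reduces everything to a single Claim---if $A$ is closed in $X$ and $O$ is an open convex subset of $X$ with $A\subset O$, then $UA\subset O$ for some $U\in N_e$---and then does a short case analysis on the shape of $O$ in the LOTS $X$, using the endpoint $b\in X$ of $O$ (not a point of $Y$) as the place where continuity of $G\times X\to X$ is applied. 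Your ``orbit trick'' (a lower bound $w$ of $A$ gives $VA\subset[Vw,\infty)$) is essentially the same monotonicity device the paper uses, but with $w$ taken in $X$ from the start.

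In short: your plan is sound and completable, but the detour through $Y$ manufactures the boundary obstacles you flag. Notice that in your own reduction $A\subseteq O_i$ with $O_i$ open convex in $X$; the endpoints of $O_i$ already give you control points in $X$, and using them instead of $\eta,\zeta$ collapses your argument to the paper's.
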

 \begin{proof} Since $\dl$ is already $G$-invariant,  it is enough to show (by Smirnov's theorem, Fact \ref{th:Sm.thmG}) that the proximity is compatible with the action. 
 	That is, if $A \rem B$ then there exists $U \in N_e$ such that $UA \cap B =\emptyset$.  
%
%
%
 		In condition (b) of Fact \ref{f:Fed}.2, we may assume that the open convex subsets $O_i$ are disjoint. Let $A_i:=A \cap O_i$ for every $i \in \{1,2,\cdots,n\}$. Then 
 		$$A_{i_0}=A \setminus \cup_{i \neq i_0} O_i$$ 
 		is closed in $X$. 
 		Since we have finitely many $i$, it is enough to prove 
 		the following (this will cover also condition (a) in the definition of ordered proximity).  
 		
 		\sk 
 	\nt 	\textbf{Claim.} \textit{Let $A$ be a closed subset of $X$ and $O$ is an open $\leq$-convex subset of $X$ which contains $A$. Then there exists $U \in N_e$ such that $UA \subset O$. }
 		
 		\sk 
 		
 		Proof:  We may assume that $O \neq \emptyset, O \neq X$.
 		Consider other cases for convex open subsets.   
 		
 		\sk 
 		(a) $O=(-\infty,b)$. Two subcases:
 		
 	\begin{itemize}
 		\item 	[(a1)] If $[b,\infty)$ is open. 
 		By the 
 		continuity of the action, there exists $U \in N_e$ such that $Ub \subset [b,\infty)$. For every $b \leq x$ and every $u \in U$ we have $b  \leq ub  \leq  ux $ (action preserves the order). Hence, 
 		$U[b,\infty) \subset [b,\infty)$. 
 		\item [(a2)] If $[b,\infty)$ is not open. That is, $b$ is not an internal point of $[b,\infty)$. Then every open neighborhood of $b$ meets $(-\infty,b)$. Choose a 
 		neighborhood $W$ of $b$ which is disjoint with the closed set $A$. Since $X$ carries the interval topology $\tau_{\leq}$, one may assume that $W=(c,d)$. 
 		Then $a <x$ for every $a \in A$ and every $x \in (c,d)$. 
 		Choose $x_0 \in  (c,d) \cap (-\infty,b)$.  There exists  $U \in N_e$ such that $U^{-1}=U$ and  $Ux_0 \subset (c,d)$. Then $a \leq c < ux_0 <ux$ for every $x \in [b,\infty)$ and $u \in U$. So, 
 		$UA=U^{-1}A$ and $ [b,\infty)$ are disjoint.   
 	\end{itemize}

 		\sk 
 		(b) $O=(b,\infty)$. This case is completely similar to (a). 
 		
 		\sk 
 		
 		(c) $O=(-b_1,b_2)$. 
 			Combine (a), (b) taking the intersection of two neighborhoods of $e$. 
 		
 		\sk 
 		
 		(d) $O=(-\infty,b]$. 
 	 
 		\sk 
 		 Then, since $O$ is open, $b$ is an internal point of $(-\infty,b]$. 
 		There exists $U \in N_e$ such that $Ub \subset (-\infty,b]$. Then $UA \subset U(-\infty,b] \subset (-\infty,b]$. 
 		
 		\sk 
 		(e) $O=[b,\infty)$. Similar to (d).  
 		
 		\sk
 		(f) $O=[b_1,b_2]$. Combine (d) and (e).  
 \end{proof}

\sk 
\section{$G$-compactifications and proximities}

\sk 
\begin{thm} \label{t:G-proxOfQuasib} 
	Let $(X,\U) \in \rm{Unif}^G$, where $G$ is an arbitrary topological group. Then the following rule 
	\begin{equation} \label{eq:GproximityFrom} 
		A \ \nu \ B \Leftrightarrow \forall V \in N_e \ \ \ VA \ \delta_{\U} \ VB 
	\end{equation}
defines a $G$-proximity on $X$ such that $\delta_{\U^G}=\nu$ and it 
	is the greatest \textbf{$\U$-uniform} $G$-compactification of $X$ (that is, $\rho \preceq \nu$ for any $G$-proximity $\rho \preceq \dl_U$).  
\end{thm}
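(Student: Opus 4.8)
The plan is to recognize that the relation $\nu$ is nothing but $\delta_{\U^G}$, the proximity induced by the bounded modification $\U^G$ of $\U$ constructed in Lemma \ref{q-->b}; once this identification is made, assertions (1) and (2) become essentially immediate, and (3) reduces to a short argument using the $G$-compatibility of proximities. This route is preferable to checking the proximity axioms (P1)--(P6) for $\nu$ directly from formula (\ref{eq:GproximityFrom}), since verifying (P4) and especially (P5) by hand would be laborious.

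First I would establish $\nu = \delta_{\U^G}$. Recall that $\U^G$ has the filter base $\{[U,\eps]\}_{U \in N_e,\, \eps \in \U}$ with $[U,\eps]$ as in (\ref{eq:Ueps}). A proximity induced by a uniformity may be tested on any base (if every basic entourage meets $A\times B$, then so does every larger entourage), so $A \ \delta_{\U^G} \ B$ holds iff $[U,\eps] \cap (A \times B) \neq \emptyset$ for all $U \in N_e$ and $\eps \in \U$. Unwinding the definition of $[U,\eps]$ one checks the set-theoretic equivalence
$$
[U,\eps] \cap (A \times B) \neq \emptyset \Longleftrightarrow \eps \cap (UA \times UB) \neq \emptyset,
$$
since a witness $(a,b) \in A \times B$ with $(u_1 a, u_2 b) \in \eps$ for some $u_1,u_2 \in U$ is exactly a point of $\eps \cap (UA \times UB)$. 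Hence $A \ \delta_{\U^G} \ B$ iff $UA \ \delta_{\U} \ UB$ for every $U \in N_e$, which is precisely the defining condition of $A \ \nu \ B$. This proves (2). Since $(X,\U) \in \rm{Unif}^G$, Lemma \ref{q-->b}(1) gives $(X,\U^G) \in \rm{EUnif}^G$, and then Proposition \ref{equiun->S-prox} shows that $\delta_{\U^G} = \nu$ is a $G$-proximity, proving (1).

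It remains to prove the maximality (3). That $\nu$ is $\U$-uniform, i.e. $\nu \preceq \delta_{\U}$, is direct: if $A \ \delta_{\U} \ B$ then, since $e \in V$ gives $A \times B \subseteq VA \times VB$, every $\eps \in \U$ meets $VA \times VB$ for each $V \in N_e$, so $A \ \nu \ B$. For the greatest property, let $\rho$ be any $G$-proximity with $\rho \preceq \delta_{\U}$; I argue by contraposition that $\rho \preceq \nu$. Suppose $A \ \overline{\rho} \ B$. Because $\rho$ is a $G$-proximity, the strengthened compatibility condition (2$^{str}$) of Remark \ref{r:semigr} yields $V \in N_e$ with $VA \ \overline{\rho} \ VB$. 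Applying $\rho \preceq \delta_{\U}$ in its contrapositive form (``$\overline{\rho}$ implies $\overline{\delta_{\U}}$'') gives $VA \ \overline{\delta_{\U}} \ VB$, i.e. there is $\eps \in \U$ with $\eps \cap (VA \times VB) = \emptyset$. This is exactly $A \ \overline{\nu} \ B$, so $\rho \preceq \nu$, as required.

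The main obstacle, such as it is, is the first reduction: noticing that the ad hoc formula (\ref{eq:GproximityFrom}) coincides with the canonical proximity of the already-understood uniformity $\U^G$. After that identification the entire argument runs on results established earlier (Lemma \ref{q-->b}, Proposition \ref{equiun->S-prox}, and the compatibility (2$^{str}$) from Remark \ref{r:semigr}), and the only genuine computation is the elementary equivalence $[U,\eps] \cap (A\times B) \neq \emptyset \Leftrightarrow \eps \cap (UA \times UB) \neq \emptyset$.
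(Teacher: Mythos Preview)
Your proof is correct and follows essentially the same approach as the paper: identify $\nu$ with $\delta_{\U^G}$, then invoke Lemma \ref{q-->b} and Proposition \ref{equiun->S-prox} for the $G$-proximity claim, and Remark \ref{r:semigr} for maximality. Your verification of $\nu=\delta_{\U^G}$ is in fact slightly cleaner than the paper's, handling both directions via the single set-theoretic equivalence $[U,\eps]\cap(A\times B)\neq\emptyset \Leftrightarrow \eps\cap(UA\times UB)\neq\emptyset$, whereas the paper establishes one direction by first using that $\delta_{\U^G}$ is a $G$-proximity.
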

\begin{proof} We have to show that $\delta_{\U^G}=\nu$. That is, 
	$\nu$ coincides with the canonical proximity $\delta_{\U^G}$ of the uniformity $\U^G$ (where $\U^G$ is defined in Lemma \ref{q-->b}). 
	
	Let $A$ and $B$ be $\U^G$-far subsets in $X$. That is, $A \ \overline{\dl_{\U^G}} \ B$. Lemma \ref{q-->b} guarantees that 
	$(X,\U^G) \in \rm{EUnif}^G$.  
	Since $\U^G$ is an equiuniformity, its proximity $\dl_{\U^G}$ is a $G$-proximity by Proposition \ref{equiun->S-prox}. 
	Hence, 
	$$
	\exists \ V \in N_e \ \ \ V A \ \overline{\dl_{\U^G}} \ V B.
	$$ 
	Since $\U^G \subseteq \U$, we have $\dl_{\U^G} \preceq \dl_{\U}$. Therefore, 
	$V A \ \overline{\dl_{\U}} \  VB$. By definition of $\nu$ this means that 
	$A \ \overline{\nu} \ B$. 
	
	In the converse direction, we assume now that $A \ \overline{\nu} \ B$. That is, $V A \ \overline{\dl_{\U}} \  VB$ for some $V \in N_e$. There exists $\eps \in \U$ such that 
	$$
	\eps \cap (VA \times VB) = \emptyset.
	$$ This implies that 
	$$
	[V,\eps] \cap (A \times B) = \emptyset,
	$$
	where, as in Lemma \ref{q-->b}, $[V,\eps] =\{(x,y) \in X\times X: \  \exists v_1, v_2 \in U \ \  (v_1x,v_2y) \in \eps \}$. 
	By definition of the uniformity $\U^G$, this means that $A \ \overline{\dl_{\U^G}} \ B$. 
	  So, we can conclude that $\nu=\dl_{\U^G}$ and $\nu$ is a $G$-proximity (because, $\dl_{\U^G}$ is). 
	  
	  Since, $\U^G \subseteq \U$, we have $\nu=\dl_{\U^G} \preceq \dl_{\U}$.
	   This shows that the $G$-compactification $\nu$  of $X$ is  $\U$-uniform.  
	
	\sk 
	Finally we show the \textit{maximality property} of $\nu$. 
	Let $\rho$ be any $\U$-uniform $G$-compactification and $A \overline{\rho} B$. Then by Smirnov's theorem \ref{th:Sm.thmG} (and Remark \ref{r:semigr}), there exists $V \in N_e$ such that $VA \  \overline{\rho} \ VB$. Since $\rho$ is $\U$-uniform, we have $\rho \preceq \dl_{\U}$. Therefore,  $VA \ \overline{\dl_{\U}} \ VB$ holds. So, by Equation \ref{eq:GproximityFrom} we can conclude that 
	$VA \ \overline{\nu} \ VB$. This means that $\rho \preceq \dl_{\U^G}=\nu$. 
\end{proof}

\begin{cor} \label{c:aut}  
	Let $G$ and $X$ both be topological groups and $\a \colon G \times X \to X$ is  a continuous action by group automorphisms. Then
	the following condition 
	$$
	A \ \rem \ B \Leftrightarrow \exists V \in N_e(X) \ \exists U \in N_e(G) \ \ V \{g(A)\}_{g \in U}\ \cap  V\{g(B)\}_{g \in U} = \emptyset 
	$$
	defines a $G$-proximity which is the greatest $\mathcal{R}(X)$-uniform $G$-compactification, where $\mathcal{R}(X)$ is the right uniformity of $X$. 
\end{cor}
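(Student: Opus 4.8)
The plan is to obtain this as an immediate specialization of Theorem \ref{t:G-proxOfQuasib} to the right uniformity $\mathcal{R}(X)$ of the group $X$. First I would invoke Remark \ref{r:simple-qb}.4, which tells us precisely that under the stated hypotheses $(X,\mathcal{R}(X)) \in \rm{Unif}^G$. Hence Theorem \ref{t:G-proxOfQuasib} applies with $\U=\mathcal{R}(X)$ and produces the greatest $\mathcal{R}(X)$-uniform $G$-proximity $\nu$, characterized by
$$
A \ \nu \ B \Leftrightarrow \forall U \in N_e(G) \ \ \ UA \ \delta_{\mathcal{R}(X)} \ UB,
$$
where, since the action is by automorphisms, $UA=\bigcup_{g\in U} g(A)=\{g(A)\}_{g\in U}$. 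So the only thing left is to verify that the farness relation $A \ \overline{\nu} \ B$ coincides with the relation $A \ \rem \ B$ displayed in the statement; everything about $\nu$ being a $G$-proximity and about its maximality among $\mathcal{R}(X)$-uniform $G$-compactifications is already delivered by Theorem \ref{t:G-proxOfQuasib}.

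Negating the displayed characterization of $\nu$, I have $A \ \overline{\nu} \ B$ iff there is $U \in N_e(G)$ with $UA \ \overline{\delta_{\mathcal{R}(X)}} \ UB$. The crux is therefore a purely group-theoretic reformulation of $\delta_{\mathcal{R}(X)}$-farness valid for arbitrary subsets $P,Q \subseteq X$:
$$
P \ \overline{\delta_{\mathcal{R}(X)}} \ Q \Leftrightarrow \exists V \in N_e(X) \ \ \ VP \cap VQ = \emptyset.
$$
To prove this I would use that a basic entourage of the right uniformity is $\eps_W=\{(x,y): xy^{-1}\in W\}$ for $W \in N_e(X)$, so that by Definition \ref{d:ind-un} the condition $\eps_W \cap (P\times Q)=\emptyset$ is exactly $PQ^{-1}\cap W=\emptyset$, while a direct check gives $VP\cap VQ=\emptyset$ iff $PQ^{-1}\cap V^{-1}V=\emptyset$. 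The implication $\Leftarrow$ then follows by taking $W:=V^{-1}V \in N_e(X)$, and $\Rightarrow$ by choosing a symmetric $V \in N_e(X)$ with $V^2\subseteq W$, so that $V^{-1}V=V^2\subseteq W$.

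Finally I would substitute $P:=UA$ and $Q:=UB$ into this equivalence and merge the resulting existential quantifier over $V \in N_e(X)$ with the quantifier over $U \in N_e(G)$; this turns $A \ \overline{\nu} \ B$ into
$$
\exists V \in N_e(X) \ \exists U \in N_e(G) \ \ \ V\{g(A)\}_{g\in U} \cap V\{g(B)\}_{g\in U} = \emptyset,
$$
which is exactly the relation $A \ \rem \ B$. The step I expect to require the most care is the bookkeeping of sides: one must match left multiplication by neighborhoods of $X$ with the right uniformity (whose entourages are controlled by $xy^{-1}$), and the identity $VP\cap VQ=\emptyset \Leftrightarrow PQ^{-1}\cap V^{-1}V=\emptyset$ must be verified for arbitrary, not necessarily closed, subsets. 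I would also double-check the convention for $\mathcal{R}(X)$ so that the side of multiplication appearing in the displayed formula is consistent with the uniformity named in the statement.
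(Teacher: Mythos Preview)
Your proposal is correct and follows exactly the paper's approach: invoke Remark \ref{r:simple-qb}.4 to get $(X,\mathcal{R}(X)) \in \rm{Unif}^G$ and then apply Theorem \ref{t:G-proxOfQuasib}. The paper's proof stops there, leaving the unwinding of $\overline{\delta_{\mathcal{R}(X)}}$ into the displayed formula implicit; your additional verification (via $PQ^{-1}\cap W$ and $PQ^{-1}\cap V^{-1}V$) is a clean way to make that step explicit, and your caution about matching the side of multiplication to the convention for $\mathcal{R}(X)$ is well placed (compare Remark \ref{r:right}).
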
 
\begin{proof} Observe that 
	$\lan (X,\mathcal{R}(X)),\a \ran \in \rm{Unif}^G$. 
	Now apply Theorem \ref{t:G-proxOfQuasib} to $(X,\mathcal{R}(X))$. 
\end{proof}

\sk 
\begin{defin} \label{d:domin} 
	Let $(X,d)$ be a metric space and $\pi \colon G \times X \to X$ is any action with uniform translations. We say that this action is $d$\textit{-majored} if the greatest $G$-compactification of $X$ is $d$-uniform.  
\end{defin}

\begin{prop} \label{p:metr_prox}
	Let $(X,d)$ be a metric space and  a $G$-space such that $(X,\U(d)) \in \rm{Unif}^G$ 
	(e.g., $d$ is $G$-invariant). Then the following condition 
	\begin{equation} \label{eq:maj} 
		A \ \rem \ B \Leftrightarrow \exists V \in N_e \ \ d(VA,VB) > 0
	\end{equation} 
	defines a $G$-proximity which is the greatest \textbf{$d$-uniform} $G$-compactification of $X$ (and coincides with the proximity of $\U(d)^G$). 
	
	If, in addition, the action is $d$-majored then \ref{eq:maj} describes the proximity of $\beta_G X$. 
	
	
\end{prop}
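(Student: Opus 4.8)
The plan is to deduce this proposition as a direct specialization of Theorem \ref{t:G-proxOfQuasib} to the metric uniformity $\U:=\U(d)$. First I would identify the induced proximity $\dl_{\U(d)}$ explicitly. Using the standard basic entourages $\eps_r=\{(x,y):d(x,y)<r\}$ of $\U(d)$, Definition \ref{d:ind-un} gives $A \ \dl_{\U(d)} \ B$ iff $\eps_r\cap(A\times B)\neq\emptyset$ for every $r>0$; since the $\eps_r$ form a base, this is exactly $\inf\{d(a,b):a\in A,\ b\in B\}=0$, i.e. $d(A,B)=0$. Hence the corresponding "far" relation is $A \ \overline{\dl_{\U(d)}} \ B \Leftrightarrow d(A,B)>0$. (When $A$ or $B$ is empty both sides fail, consistent with axiom (P3).)

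With this identification in hand, the rule \eqref{eq:GproximityFrom} of Theorem \ref{t:G-proxOfQuasib}, namely $A \ \nu \ B \Leftrightarrow \forall V\in N_e \ \ VA \ \dl_{\U(d)} \ VB$, has negation $A \ \overline{\nu} \ B \Leftrightarrow \exists V\in N_e \ \ VA \ \overline{\dl_{\U(d)}} \ VB \Leftrightarrow \exists V\in N_e \ \ d(VA,VB)>0$, which is precisely the relation defining $\overline{\nu}$ in formula \eqref{eq:maj}. Since $(X,\U(d))\in\rm{Unif}^G$ by hypothesis, Theorem \ref{t:G-proxOfQuasib} applies and yields at once that $\nu$ is a $G$-proximity, that $\nu=\dl_{\U(d)^G}$, and that $\nu$ is the greatest $\U(d)$-uniform (equivalently $d$-uniform) $G$-compactification of $X$. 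This settles the first assertion. (Note that $X$ is $G$-Tychonoff by Theorem \ref{t:Tyc}, so that the maximal $G$-proximity $\beta_G$ referred to below indeed exists.)

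For the second assertion I would run a short squeeze argument in the dominance order $\preceq$. Assume the action is $d$-majored, so by Definition \ref{d:domin} the maximal $G$-proximity $\beta_G$ is $d$-uniform, i.e. $\beta_G\preceq\dl_{\U(d)}$. The maximality clause of Theorem \ref{t:G-proxOfQuasib} asserts that every $G$-proximity $\rho$ with $\rho\preceq\dl_{\U(d)}$ satisfies $\rho\preceq\nu$; applying this with $\rho=\beta_G$ gives $\beta_G\preceq\nu$. Conversely $\nu$ is itself a $G$-proximity, so by maximality of $\beta_G$ among all $G$-proximities we have $\nu\preceq\beta_G$. Therefore $\nu=\beta_G$, and formula \eqref{eq:maj} describes exactly the relation $A \ \overline{\beta_G} \ B$.

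There is no essential obstacle here: all the substance is packaged in Theorem \ref{t:G-proxOfQuasib} (itself resting on the construction of $\U(d)^G$ in Lemma \ref{q-->b}), and the argument reduces to the routine computation identifying $\dl_{\U(d)}$ with the vanishing of $d(A,B)$, followed by the order-theoretic pinch between the two maximality properties. The only point demanding minor care is the bookkeeping of the direction of $\preceq$, so that "$d$-uniform" is correctly read as $\beta_G\preceq\dl_{\U(d)}$ and the two inequalities $\beta_G\preceq\nu$ and $\nu\preceq\beta_G$ are drawn from the correct maximality statements.
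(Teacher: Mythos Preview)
Your proposal is correct and follows exactly the paper's approach: identify the induced proximity of $\U(d)$ as $A\,\dl_d\,B\Leftrightarrow d(A,B)=0$ and then apply Theorem~\ref{t:G-proxOfQuasib} to $(X,\U(d))$. The paper's proof is just these two lines; your write-up simply makes explicit the routine unpacking (including the squeeze $\beta_G\preceq\nu\preceq\beta_G$ for the $d$-majored clause) that the paper leaves to the reader.
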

\begin{proof} 
	Let $\U(d)$ be the uniformity of the metric $d$. Its proximity $\dl_d$ 
is defined as follows:  
$$
A \dl_d B \Leftrightarrow d(A,B)=0.
$$
Now apply Theorem \ref{t:G-proxOfQuasib} to $(X,\U(d))$. 
%
	\end{proof}


\sk
\sk
\begin{remark} \label{r:Urys} 
	Let $X:=(\mathbb{U}_1,d)$ be the Urysohn sphere and $G:=\Iso (\mathbb{U}_1)$ be the Polish isometry group (pointwise topology). 
	In the joint work \cite{IbMe20} with T. Ibarlucia,  we prove that for the $G$-space $X$ the maximal $G$-compactification of $X$ is just the Gromov compactification $\gamma (\mathbb{U}_1,d)$ (in particular, $\beta_G (\mathbb{U}_1)$ is metrizable) and $\RUC(\mathbb{U}_1)$ is the unital algebra generated by the distance functions. Since $\g\colon (X,d) \to \g X$ is a $d$-uniformly continuous topological $G$-embedding, we obtain that the greatest $G$-compactification of $X$ is $d$-uniform. 
	So, the action is $d$-majored. 
%
\end{remark} 

\begin{thm} \label{t:Urys} 
	Let $X:=(\mathbb{U}_1,d)$ be the Urysohn sphere and $G:=\Iso (\mathbb{U}_1)$. Then for closed subsets $A,B$ in $\mathbb{U}_1$ we have:
	$$
	A \ \overline{\beta_G} \ B \Leftrightarrow \exists V \in N_e(G)  \ \ \  d(VA, VB) > 0.
	$$
\end{thm}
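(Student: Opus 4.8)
The plan is to read this statement off as the special case of Proposition \ref{p:metr_prox} in which $(X,d)$ is the Urysohn sphere, so that all that must be done is to verify the two hypotheses of that proposition for the concrete action of $G=\Iso(\mathbb{U}_1)$ on $\mathbb{U}_1$.

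First I would check that $(X,\U(d)) \in \rm{Unif}^G$. Since $G=\Iso(\mathbb{U}_1)$ acts by isometries, the metric $d$ is $G$-invariant; this is precisely the sample hypothesis singled out in Proposition \ref{p:metr_prox}, and it also follows from Remark \ref{r:simple-qb}.2 because an isometric action is uniformly equicontinuous. Consequently Equation \ref{eq:maj} already defines a $G$-proximity $\nu$, namely the greatest $d$-uniform $G$-compactification, with $\nu=\dl_{\U(d)^G}$.

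Next I would supply the second hypothesis, that the action is $d$-\emph{majored} in the sense of Definition \ref{d:domin}. This is exactly the content of Remark \ref{r:Urys}, which records the result of \cite{IbMe20}: the maximal $G$-compactification $\beta_G\mathbb{U}_1$ coincides with the Gromov compactification $\g(\mathbb{U}_1,d)$, and since $\g\colon(X,d)\to\g X$ is a $d$-uniformly continuous topological $G$-embedding, the greatest $G$-compactification is $d$-uniform. Thus $\beta_G$ itself is $d$-uniform, and being the greatest $G$-compactification it must agree with the greatest $d$-uniform $G$-compactification $\nu$.

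With both hypotheses in hand, the final clause of Proposition \ref{p:metr_prox} applies directly: Equation \ref{eq:maj} describes the proximity of $\beta_G X$, which gives
$$A \ \overline{\beta_G} \ B \Leftrightarrow \exists V \in N_e(G) \ \ d(VA,VB)>0$$
for every pair of subsets, in particular for closed $A,B$ as stated (the hypothesis that $A,B$ be closed is in any case inessential, since a proximity relation depends only on the closures of its arguments). The proof has no genuine obstacle once the earlier results are granted; the whole weight of the theorem sits in the cited identification $\beta_G\mathbb{U}_1=\g(\mathbb{U}_1,d)$ from \cite{IbMe20}, which is what forces $\beta_G$ to be $d$-uniform. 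Absent that $d$-majoredness, Proposition \ref{p:metr_prox} would yield only the greatest $d$-uniform $G$-compactification, which a priori could be strictly coarser than $\beta_G$.
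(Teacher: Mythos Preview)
Your proposal is correct and matches the paper's own proof, which simply reads ``Combine Remark \ref{r:Urys} and Proposition \ref{p:metr_prox}.'' You have spelled out exactly what that combination entails: the $G$-invariance of $d$ gives $(X,\U(d))\in\rm{Unif}^G$, and Remark \ref{r:Urys} supplies the $d$-majoredness needed for the final clause of Proposition \ref{p:metr_prox}.
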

\begin{proof}
	Combine 
	Remark \ref{r:Urys} and Proposition \ref{p:metr_prox}. 
\end{proof}

\sk 
\begin{remark} \label{r:GeneralCase}  
Theorem \ref{t:Urys} remains true for a large class of all  $\aleph_0$-\textit{categorical metric $G$-structures} $M=(X,d)$ 
(including Urysohn sphere), where $G:=\Aut(M,d)$ is its automorphism group 
(for definitions, motivation and related tools, we refer to \cite{BBHU08} or, \cite{IbMe20}). In this case 
the action is $d$-majored  
as it follows from \cite[Theorem 4.4]{IbMe20}. 

Another condition which guarantees that the action is $d$-majored, 
is the \textit{uniform micro-transitivity} of the action in the sense of \cite{IbMe20}.  
\end{remark}

\sk 
%

%

\begin{thm} \label{c:MaxGproximityLocCompG} 
	\emph{(Maximal $G$-compactification for locally compact $G$ actions)} 
	  
\nt 	Let $G$ be a locally compact group. Then for every $G$-space $X$ the maximal $G$-proximity $\beta_G$ can be characterized by the maximal topological proximity $\beta$ as follows:
	$$
	A \ \beta_G \ B \Leftrightarrow \forall V \in N_e \ \ \ VA \ \beta \ VB.
	$$
	So, if $X$, as a topological space is normal, then we obtain
	$$
	A \ \beta_G \ B \Leftrightarrow \forall V \in N_e \ \ \ V cl(A) \cap V cl(B) \neq \emptyset.
	$$
\end{thm}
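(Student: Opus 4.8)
The plan is to obtain both formulas as the instance $\U=\U_{max}$ of Theorem \ref{t:G-proxOfQuasib}, where $\U_{max}$ denotes the greatest compatible uniformity on the Tychonoff space $X$. First I would use the local compactness of $G$: by Theorem \ref{c:LCqB} we have $(X,\U_{max}) \in \rm{Unif}^G$, so Theorem \ref{t:G-proxOfQuasib} applies to $\U_{max}$ and yields the $G$-proximity
$$
A \ \nu \ B \Leftrightarrow \forall V \in N_e \ \ \ VA \ \dl_{\U_{max}} \ VB,
$$
which is the greatest $\U_{max}$-uniform $G$-compactification of $X$. Since by Example \ref{e:max0}(2) the maximal (\^{C}ech--Stone) proximity $\beta$ is precisely $\dl_{\U_{max}}$, the displayed rule already reads $A \ \nu \ B \Leftrightarrow \forall V \in N_e \ VA \ \beta \ VB$, and it remains only to identify $\nu$ with $\beta_G$.

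To see $\nu = \beta_G$ I would argue that the greatest $\U_{max}$-uniform $G$-compactification is nothing but the greatest $G$-compactification. Indeed, as $\U_{max}$ is the largest compatible uniformity, its proximity $\dl_{\U_{max}}=\beta$ is the largest compatible proximity; hence \emph{every} $G$-proximity $\rho$ automatically satisfies $\rho \preceq \beta = \dl_{\U_{max}}$ and is therefore $\U_{max}$-uniform. Thus the $\U_{max}$-uniform $G$-compactifications are exactly all $G$-compactifications, and their greatest element is $\beta_G$ by definition. This gives $\nu=\beta_G$ and establishes the first equivalence of the theorem.

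For the second formula I would specialize through Example \ref{e:max0}(3): when $X$ is normal, $C \ \beta \ D \Leftrightarrow cl(C)\cap cl(D)\neq\emptyset$, so $VA \ \beta \ VB \Leftrightarrow cl(VA)\cap cl(VB)\neq\emptyset$. It then suffices to prove the equivalence of the two conditions
$$
[\forall V \in N_e: \ cl(VA)\cap cl(VB)\neq\emptyset] \Leftrightarrow [\forall V \in N_e: \ V\,cl(A)\cap V\,cl(B)\neq\emptyset].
$$
One direction is immediate from $V\,cl(A)=\bigcup_{v\in V} cl(vA) \subseteq cl(VA)$ (and likewise for $B$): a common point of $V\,cl(A)$ and $V\,cl(B)$ already lies in $cl(VA)\cap cl(VB)$.

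The main obstacle is the reverse direction, and this is exactly the point where local compactness of $G$ is essential. Here I would fix $V \in N_e$ and use local compactness to choose $W \in N_e$ with $cl(W)$ compact and $cl(W)\subseteq V$. The key lemma is then $cl(WA) \subseteq cl(W)\,cl(A)$: given $p \in cl(WA)$, pick a net $w_i a_i \to p$ with $w_i \in W$ and $a_i \in A$; by compactness of $cl(W)$ pass to a subnet along which $w_i \to w \in cl(W)$, and then continuity of the action forces $a_i = w_i^{-1}(w_i a_i) \to w^{-1}p$, so that $w^{-1}p \in cl(A)$ and $p \in cl(W)\,cl(A) \subseteq V\,cl(A)$. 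Applying this to $A$ and to $B$ and invoking the hypothesis $cl(WA)\cap cl(WB)\neq\emptyset$ produces a point of $V\,cl(A)\cap V\,cl(B)$, which completes the argument.
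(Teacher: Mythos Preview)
Your argument is correct and follows the same route as the paper: apply Theorem~\ref{c:LCqB} to get $(X,\U_{max})\in\rm{Unif}^G$, then invoke Theorem~\ref{t:G-proxOfQuasib} with $\U=\U_{max}$ and identify $\dl_{\U_{max}}=\beta$; your explicit justification that every $G$-proximity is automatically $\U_{max}$-uniform (hence $\nu=\beta_G$) is a point the paper leaves implicit. For the normal case the paper is a bit slicker: rather than pick $W$ with compact closure inside an arbitrary $V$ and run a net argument, it simply restricts the quantifier to \emph{compact} $V\in N_e$ (a neighborhood basis, by local compactness) and uses the standard fact that a compact set acting on a closed set is closed, giving $cl(VA)=V\,cl(A)$ in one stroke; your version reaches the same conclusion with slightly more work but is equally valid.
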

\begin{proof} 
	By Theorem \ref{c:LCqB} for the maximal compatible uniformity, we have 
		 $(X,\U_{max}) \in \rm{Unif}^G$. In fact, $\U_{max}$ is the greatest compatible uniformity on $X$. Therefore, 
		 its proximality defines just the usual maximal compactification $\beta X$. 
		 We use the notation $\beta$ for this proximity. Note that 
		 $VA \ \beta \ VB$ means that $VA$ and $VB$ cannot be functionally separated. 
	Now, we use Theorem \ref{t:G-proxOfQuasib} in order to complete the proof. 
	
	If $X$ is normal then the condition $VA \ \beta \ VB$ means that $cl(VA) \cap cl(VB) \neq \emptyset$ (see Example \ref{e:max0}.3). Since $G$ is locally compact, we can suppose that $V$ is compact. Hence $V M$ is closed for every closed subset $M \subset X$. In particular, we get $V cl(A)=cl(V A)$ and $V cl(B)=cl(V B)$. 
\end{proof}



\begin{remark} \label{r:ex} 
	Local compactness of $G$ is essential. Indeed, for every Polish group $G$ which is not locally compact,   there exists a second countable $G$-Tychonoff space $X$ and closed disjoint (hence, far in $\beta X$) $G$-\textbf{invariant} subsets $A,B$ such that $A \ \beta_G \ B$ (cannot be separated by $\RUC$ functions). This follows from the proof of \cite[Theorem 4.3]{MeSc98}. 
	
	Since the proof of \cite[Theorem 4.3]{MeSc98} is quite complicated, 
	we give here a concrete simpler example for the sake of completeness. 
	The idea is similar to \cite{Me-Ex88} (see also \cite{Me-b}).  
\end{remark}

\begin{example} \label{ex:ex}  
	 Let $I=[0,1]$ be the unit interval and 
	$$G_1=H_+[0,1]=\{g \in \Homeo(I): \ g(0)=0, g(1)=1\}.$$
	Denote by $\pi_1\colon G_1 \times I \to I$ the natural action of $G_1$ on $I$. 
	Then $cl(G_1 O) =[0,1]$ for every neighborhood of $0$.

	Let $\{(G_n,I_n,\pi_n): n \in \N\}$ be a countable system of ttg's, where each 
	$(G_n,I_n,\pi_n)$ is a copy of $(G_1,I,\pi_1)$. 
	Consider the special equivariant sum $(G,X,\pi)$ of the actions $\pi_n$. 
	So, $G=\prod_{k \in \N} G_k$ and $X=\bigsqcup_{k \in \N} I_k$. Clearly, $G$ is a Polish group and $X$ is a separable metrizable $G$-space.  
	Define two naturally defined subsets $A,B$ of $X$, where $A:=\{i_n(0): n \in \in \N\}$ is the set of all left end-points and $B:=\{i_n(1): n \in \N\}$ is the set of all right end-points. Then $X$, being locally compact, clearly is $G$-Tychonoff. The subsets $A,B$ are  closed disjoint and $G$-invariant subsets   in $X$. So, 
	$
	GA \cap GB = \emptyset.
	$
	 However, they cannot separated by any $\RUC$ function. Hence we have $A \ \beta_G \ B$. 
\end{example}



\sk 
\subsection*{Equivariant normality} 

Two subsets $A,B$ of a $G$-space $X$ are said to be $\pi$-\textit{disjoint} if 
$UA \cap UB = \emptyset$ for some $U \in N_e$. It is equivalent to require that 
$VA \cap B = \emptyset$ for some $V \in N_e$. 

We introduced the following definition in order to answer some questions of Yu.M. Smirnov. Among others, to have a generalized \textit{Urysohn Lemma} and a  \textit{Tietze extension theorem} for $G$-spaces. 

\begin{defin} \label{d:equinormal} \cite{Me-EqNorm83,Me-diss85} Let $G$ be a  topological group $G$. A $G$-space 
	$X$ is $G$-\textit{normal} (or, \textit{equinormal}) if for every pair of $\pi$-disjoint closed subsets $A_1,A_2$ in $X$ there exists a pair of disjoint neighborhoods $O_1,O_2$ such that $O_1$ and $O_2$ are $\pi$-disjoint. 
\end{defin} 

This concept is closely related to $G$-proximities as the following result shows. 

\begin{fact}
	The following are equivalent:
	\begin{enumerate}
		\item $X$ is $G$-normal; 
		\item every pair of $\pi$-disjoint closed subsets $A_1,A_2$ in $X$ can be separated by a function $f \in \RUC(X)$;
		\item 	the relation
		$$
		A \delta_{\pi} B  \Leftrightarrow \forall \  V \in N_e \ \ \  V cl(A) \cap V cl(B) \neq
		\emptyset
		$$
		is a proximity on $X$; 
		\item the relation from (2) is a $G$-proximity on $X$; 
		\item the relation from (2) is the maximal $G$-proximity $\beta_G$ on $X$. 
	\end{enumerate}
\end{fact}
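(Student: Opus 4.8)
The plan is to show first that the relation $\delta_{\pi}$ already satisfies \emph{every} requirement of a $G$-proximity except, possibly, the strong containment axiom (P5) (equivalently (P5$'$)); consequently conditions (3), (4), (5) all collapse to the single assertion that $\delta_{\pi}$ has (P5), and $G$-normality is exactly what supplies it. So I would first record the ``automatic'' facts, valid for any continuous action. Axioms (P1)--(P4) are routine (for (P4) one intersects two witnessing neighborhoods of $e$; everywhere one uses $e\in V$). Using continuity of each orbit map $g\mapsto gx$ at $e$, one checks that $x\in V^{-1}V\,cl(A)$ for all $V\in N_e$ forces $x\in cl(A)$; hence $cl_{\delta_{\pi}}(A)=cl(A)$, so $top(\delta_{\pi})=\tau$ and (P6) hold. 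The relation is $G$-invariant by a conjugation argument ($V\,cl(gA)=g\,(g^{-1}Vg)\,cl(A)$ with $g^{-1}Vg\in N_e$), and it is compatible with the action in the sense of Fact \ref{th:Sm.thmG}(2) trivially, since $A\ \overline{\delta_{\pi}}\ B$ is already witnessed by some $V$ with $VA\cap VB=\emptyset$. Finally I would establish the one-sided comparison $\beta_G\preceq\delta_{\pi}$, valid always: if $A\ \overline{\beta_G}\ B$ then by Smirnov's theorem in its strong form (Remark \ref{r:semigr}) there is $V\in N_e$ with $VA\ \overline{\beta_G}\ VB$, whence $cl(VA)\cap cl(VB)=\emptyset$; since $V\,cl(A)\subseteq cl(VA)$ and $V\,cl(B)\subseteq cl(VB)$, this gives $A\ \overline{\delta_{\pi}}\ B$.

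With these preliminaries the core implications are short. For (3) $\Rightarrow$ (4): if $\delta_{\pi}$ is a proximity then, being $G$-invariant and action-compatible, it is a $G$-proximity by Fact \ref{th:Sm.thmG}. For (4) $\Rightarrow$ (5): a $G$-proximity is dominated by the maximal $G$-proximity $\beta_G$, so $\delta_{\pi}\preceq\beta_G$; combined with the always-true $\beta_G\preceq\delta_{\pi}$ this yields $\delta_{\pi}=\beta_G$. The implications (5) $\Rightarrow$ (4) $\Rightarrow$ (3) are trivial, and (5) $\Rightarrow$ (2) is immediate: under $\delta_{\pi}=\beta_G$, two $\pi$-disjoint closed sets are $\overline{\delta_{\pi}}$-far, hence $\overline{\beta_G}$-far, hence separated by a function in $\RUC(X)$.

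It remains to close the loop with $G$-normality, i.e. (1) $\Rightarrow$ (3) and (2) $\Rightarrow$ (1). For (2) $\Rightarrow$ (1), take $f\in\RUC(X)$ with $f(A)=0$, $f(B)=1$ and put $O_1=f^{-1}((-\infty,1/3))$, $O_2=f^{-1}((2/3,\infty))$; choosing symmetric $U\in N_e$ with $\sup_x|f(ux)-f(x)|<1/6$ for $u\in U$ gives $f(UO_1)\subseteq(-\infty,1/2)$ and $f(UO_2)\subseteq(1/2,\infty)$, so $O_1,O_2$ are disjoint open neighborhoods of $A,B$ that are $\pi$-disjoint. For (1) $\Rightarrow$ (3) I would verify (P5$'$) for $\pi$-disjoint closed $A,B$ (it suffices to treat closed sets, as $\delta_{\pi}$ depends only on closures). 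The naive choice $A_1=O_1$ fails, because $A$ need not be $\pi$-far from $X\setminus O_1$; the point is to \emph{enlarge}. By $G$-normality pick disjoint open $\pi$-disjoint $O_1\supseteq A$, $O_2\supseteq B$, with $U\in N_e$ witnessing $UO_1\cap UO_2=\emptyset$, and set $A_1:=UO_1$, $B_1:=UO_2$; these are open (each is a union of homeomorphic images $uO_i$) and disjoint. Choosing symmetric $W$ with $W^2\subseteq U$ gives $W^2A\subseteq UO_1=A_1$, so $WA\cap W(X\setminus A_1)=\emptyset$, i.e. $A\Subset A_1$ for $\delta_{\pi}$; symmetrically $B\Subset B_1$. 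Thus (P5$'$) holds and $\delta_{\pi}$ is a proximity.

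The main obstacle is precisely this equivariant strong-containment step (1) $\Rightarrow$ (3): unlike the classical Urysohn/normality iteration, one cannot simply apply $G$-normality to the disjoint closed pair $(A,X\setminus O_1)$, since that pair need not be $\pi$-disjoint, and a group translate $Ua$ of a point need not be a neighborhood. The resolution above---passing to the genuinely open enlargement $UO_1$, which carries a uniform ``collar'' $W^2A\subseteq UO_1$---is what makes (P5$'$) hold with \emph{no} local compactness of $G$, and it is exactly the place where $G$-normality (rather than mere disjointness) is indispensable; this is why the whole scheme fails for non-locally-compact $G$, as in Example \ref{ex:ex}. I would also note that this route delivers the equivariant Urysohn lemma ((1) $\Rightarrow$ (2)) for free, the separating function arising from the Samuel compactification of the now-established proximity $\delta_{\pi}=\beta_G$.
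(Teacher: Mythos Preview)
Your argument is correct. The paper itself does not supply a proof of this Fact; it is stated without proof, the result being attributed to the author's earlier work \cite{Me-EqNorm83,Me-diss85} via Definition~\ref{d:equinormal}. Consequently there is no in-paper proof to compare against, but your scheme is sound and in fact matches the spirit in which the paper organizes the surrounding material: you isolate (P5$'$) as the only nontrivial proximity axiom for $\delta_{\pi}$, use Fact~\ref{th:Sm.thmG} together with Remark~\ref{r:semigr} to obtain the automatic inequality $\beta_G\preceq\delta_{\pi}$, and then let $G$-normality supply exactly the missing strong-containment step. The key trick in your (1)$\Rightarrow$(3)---passing to the enlargements $A_1=UO_1$, $B_1=UO_2$ and using a symmetric $W$ with $W^2\subseteq U$ to obtain $W\,A\cap W(X\setminus A_1)=\emptyset$---is precisely the equivariant analogue of the classical ``insert an open set between'' step, and your observation that $A_1,B_1$ are genuinely open (hence their complements are closed, so no extra closure is needed in the $\delta_{\pi}$ test) is what makes the verification clean. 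The (2)$\Rightarrow$(1) step via the $\mathrm{RUC}$ oscillation bound is the standard equivariant Urysohn-type argument. One minor remark: in your preliminary check that $\beta_G\preceq\delta_{\pi}$, the passage from $VA\ \overline{\beta_G}\ VB$ to $cl(VA)\cap cl(VB)=\emptyset$ uses that a continuous proximity has $top(\beta_G)\subseteq\tau$, hence $cl_\tau\subseteq cl_{\beta_G}$; you might make that explicit, but the conclusion stands.
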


Every $G$-normal is $G$-Tychonoff. The action of $G:=\Q$ on $X:=\R$ is not $G$-normal. By \cite{MeSc98}, $G$ is locally compact  if and only if every normal $G$-space is $G$-normal. For some additional properties of $G$-normality, we refer to \cite{NAnt17} and \cite{Me-b}.

\sk \sk 
\subsection*{Description of $\beta_G X$ by filters}

\begin{remark}
	For every proximity space $(X,\delta)$ there exists the \textit{Smirnov's compactification }
	$$s_{\delta} \colon X \to sX,$$ where $sX$ is the set of all $\delta$-\textit{ends} (\textit{maximal centered $\delta$-systems}) $\xi$.
	See \cite{Smirnov52} for details. 
	Recall that $\delta$-\textit{system} means that every member $A \in \xi$ is a $\delta$-\textit{neighborhood} of some $B \in \xi$. That is,   
	$B\Subset A$ holds 
	(meaning that 
	$B$ and $A^c:=X \setminus A$ are $\delta$-far). 
	
	Let us apply this to the case of $\beta_G$ for a $G$-space $X$. 
	
	$\bullet$ 
	If  
	$G$ is locally compact then by Corollary \ref{c:MaxGproximityLocCompG}, 
	$B \ \overline{\beta_G} \ A^c$ if and only if $VB$ and  $V A^c$ 
	are functionally separated for some $V \in N_e$. 
	
	%
	
	$\bullet$ 
	Similar results for an arbitrary topological group $G$ is not true in general. 
	However, it is true if, in addition, the action is $G$-normal in the sense of Definition \ref{d:equinormal}.
	
	$\bullet$ If $(M,d)$ is an $\aleph_0$-categorical metric structure, then by Remark \ref{r:GeneralCase} \ 
	$B \ \overline{\beta_G} \ A^c$ if and only if $d(V A^c,VB) >0$ for some $V \in N_e(\Aut(M))$. 
\end{remark}


\sk 
\section{Additional notes about maximal $G$-compactifications}


\subsection{Coset spaces and the greatest ambit}  

	For every coset $G$-space $X:=G/H$, the standard \textit{right uniformity} $\U_r$ (see, for example, \cite{RD}) is the largest possible topologically compatible equiuniformity on the $G$-space $G/H$. So, $G/H$ is a $G$-Tychonoff space (de~Vries \cite{Vr-can75}). 
Moreover, the Samuel compactification of the right uniform space $(G/H,\U_r)$ is the greatest  (proper) $G$-compactification. 
In particular, every topological group $G$ is $G$-compactifiable with respect to the standard left action (Brook \cite{Br}). 
This $G$-space $\beta_G G$ is just the \textit{greatest ambit} of $G$ which is  widely used in topological dynamics.

Let us compare this $G$-compactification with the usual topological greatest compactification $G \to \beta G$. 
The translations in this case are continuous. So, it is a $G_{discr}$-compactification.


\begin{prop} \label{ex:beta} 
Let $G$ be a metrizable topological group which is not precompact. 
Then the canonical action $G \times \beta G \to \beta G$ is continuous if and only if $G$ is discrete. 
\end{prop}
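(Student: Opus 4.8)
The plan is to split the equivalence into its two implications and to reduce the substantial direction to a separation property of $\beta G$. The easy direction, $G$ discrete $\To$ the action is continuous, I would dispose of first: when $G$ carries the discrete topology, $G\times\beta G=\bigcup_{g\in G}\{g\}\times\beta G$ is a union, in the first coordinate, of open pieces, and on each piece $\{g\}\times\beta G$ the action is just the extension $\beta(L_g)$ of the translation homeomorphism $x\mapsto gx$; being continuous on each open piece, the action is continuous. (Equivalently, for discrete $G$ one has $\RUC(G)=C^b(G)$, so $\beta G=\beta_G G$.)

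For the converse I would argue contrapositively: assuming $G$ is metrizable, not precompact and not discrete, I produce a bounded continuous function on $G$ that is not right uniformly continuous, and deduce that the action cannot be continuous. The conceptual point is that continuity of $G\times\beta G\to\beta G$ forces $\beta G$ to be a $G$-compactification of the left regular action, hence $\beta G=\beta_G G$ (the greatest ambit) and $C^b(G)=\RUC(G)$; I will contradict this last equality. Concretely, fix a compatible right-invariant metric $\rho$ (obtained from a left-invariant Birkhoff--Kakutani metric by inverting), so that $\RUC$ coincides with $\rho$-uniform continuity and non-precompactness of $G$ is exactly non-total-boundedness of $(G,\rho)$. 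Non-precompactness then yields $\eps_0>0$ and an infinite $\eps_0$-separated set $\{a_n\}$, while non-discreteness (so that no point of $G$ is isolated, by homogeneity) yields points $b_n\ne a_n$ with $\rho(a_n,b_n)\to 0$.

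I would then glue shrinking tent functions $\phi_n(x)=\max(0,\,1-\rho(x,b_n)/r_n)$, with $r_n=\rho(a_n,b_n)/2$, into $f=\sup_n\phi_n$, so that $f$ is bounded with $f(b_n)=1$ for all $n$ and $f(a_n)=0$ for all large $n$. Since $\rho(a_n,b_n)\to 0$ while $|f(a_n)-f(b_n)|=1$, $f$ is not $\rho$-uniformly continuous, i.e.\ $f\notin\RUC(G)$, giving the contradiction. Slightly more vividly, setting $h_n:=b_n a_n^{-1}$ one has $\rho(h_n,e)=\rho(b_n,a_n)\to 0$ by right-invariance, hence $h_n\to e$; along an ultrafilter $\mathcal U$ the net $a_n$ converges to some $p\in\beta G$ and $b_n$ to some $q$, and $f$ witnesses $p\ne q$; continuity of the action at $(e,p)$ would force $h_na_n=b_n\to e\cdot p=p$, contradicting $b_n\to q\ne p$.

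The main obstacle is the simultaneous use of the two hypotheses in the gluing step. Non-precompactness is needed precisely so that the centres $b_n$ stay uniformly separated (the $\{a_n\}$ being $\eps_0$-separated keeps the nearby $b_n$ at mutual distance $\ge \eps_0/2$ for large $n$), which makes the tent supports eventually pairwise disjoint and the family locally finite, so that $f=\sup_n\phi_n$ is genuinely continuous despite $r_n\to 0$; non-discreteness is needed so that the bumps may be made arbitrarily thin ($r_n\to 0$), which is what destroys uniform continuity. A secondary point worth stressing is conceptual rather than technical: although $a_n$ and $b_n$ become arbitrarily close in the right uniformity of $G$, the compactification $\beta G$ still separates their limits, and it is exactly this gap between $\beta G$ and the uniform compactification $\beta_G G$ that the argument exploits.
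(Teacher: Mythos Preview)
Your argument is correct and rests on the same core construction as the paper's: from non-precompactness take a right-uniformly $\eps_0$-separated sequence $\{a_n\}$, and from non-discreteness choose $v_n\to e$, $v_n\neq e$, so that $b_n=v_na_n$ satisfies $\rho(a_n,b_n)\to 0$; then $A=\{a_n\}$ and $B=\{b_n\}$ witness the failure.

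The difference lies only in how the contradiction is extracted. You build an explicit bump function $f=\sup_n\phi_n\in C^b(G)\setminus\RUC(G)$ (and offer an equivalent ultrafilter-limit formulation in $\beta G$). The paper instead stays at the proximity level and avoids constructing any function: since $G$ is metrizable it is normal, so the disjoint closed sets $A,B$ are automatically $\beta$-far; on the other hand $UA\cap B\neq\emptyset$ for every $U\in N_e$, so $A,B$ are near in the proximity $\dl_R$ of the right uniformity, which coincides with $\beta_G$ for the left regular action (the Samuel compactification of $(G,\U_R)$ is $\beta_G G$). Hence $\beta\neq\beta_G$ and the action on $\beta G$ is not continuous. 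The paper's route is shorter and fits the surrounding Smirnov-proximity machinery, while your tent-function construction is self-contained and makes the obstruction concrete as a single element of $C^b(G)$; your ultrafilter paragraph is essentially the proximity argument unpacked pointwise.
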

\begin{proof} 
Let $G$ is not discrete. We have to show that there exists a pair of closed disjoint subsets $A,B$ which are near \wrt right uniformity $\U_R$. 
Since $G$ is not precompact, there exists an infinite uniformly $\U_R$-discrete sequence $\{a_n\}_{n=1}^{\infty}$. This means that $$\exists U_0 \in N_e \ U_0 x_n \cap U_0x_m =\emptyset \ \ \forall m \neq n.$$ Choose a symmetric neighborhood $V \in N_e$ such that $V^2 \subset U_0$. 
Since $G$ is metrizable and not discrete, one may choose a sequence $v_n \in V$ such that $\lim v_n =e$ and all members of this sequence are distinct.  
Define 
$A:=\{a_n\}_{n=1}^{\infty}$, $B:=\{v_na_n\}_{n=1}^{\infty}.$ Then 
$UA \cap B \neq \emptyset$ for all $U \in N_e$. Therefore, $A \beta_G B$.  On the other hand, 
 $A$ and $B$ are closed disjoint subsets in the normal space $G$. Hence, $A \overline{\beta} B$. 
\end{proof}

Note that if $G$ is a pseudocompact group then $\beta G$ is a topological group 
naturally containing $G$ (see \cite{CR}). So, in this case, $G \times \beta G \to \beta G$ is continuous and $\beta_G G=\beta G$.

\sk \sk 
\subsection*{Massive actions}


\begin{defin} \label{d:massive} 
	Let $\pi\colon  G \times X \to X$ be an action of a topological group $G$ on a uniform space $(X,\U)$. 
We say that the action is $\U$-\textit{massive} if the uniform structure  $\U^G$ 
(from Lemma \ref{q-->b}) is totally bounded. 
\end{defin}

\begin{prop} \label{p:massive} 
	Let $(X,\U) \in \rm{Unif}^G$. Consider the following conditions: 
	\begin{enumerate}
		\item the greatest $\U$-uniform $G$-compactification (induced by the proximity 	$\delta_{\U^G}$) 
		of $X$ is metrizable;  
		\item  the action is $\U$-massive. 
	\end{enumerate}
Then  always (1) $\Rightarrow$ (2). If, in addition, the uniformity $\U$ 
is metrizable and $G$ is a metrizable topological group, then (2) $\Rightarrow$ (1).
\end{prop}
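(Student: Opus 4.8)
The plan is to prove the two implications separately, exploiting the key identification $\delta_{\U^G} = \nu$ from Theorem \ref{t:G-proxOfQuasib} together with the construction of $\U^G$ from Lemma \ref{q-->b}. The greatest $\U$-uniform $G$-compactification is precisely the Samuel (equivalently Smirnov) compactification attached to $\U^G$, and since $\U^G$ is an equiuniformity (Lemma \ref{q-->b}), this $G$-compactification is metrizable exactly when its completion $(\widehat{X},\widehat{\U^G})$ is a metrizable compact $G$-space. So throughout I will translate ``the $G$-compactification is metrizable'' into a statement about the uniformity $\U^G$ and its totally bounded reflection.

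First I would prove (1) $\Rightarrow$ (2), which should hold unconditionally. Assume the greatest $\U$-uniform $G$-compactification, governed by $\delta_{\U^G}$, is metrizable. A compactification of $X$ is metrizable iff its image is second countable iff the associated totally bounded uniformity is generated by countably many entourages. Now the totally bounded reflection of $\U^G$ induces exactly this compactification. The aim is to conclude that $\U^G$ \emph{itself} is already totally bounded. Here I would invoke item (3) of Lemma \ref{q-->b}: once we know the proximity $\delta_{\U^G}$ gives a proper metrizable $G$-compactification $\alpha\colon X \hookrightarrow Y$, the compact $G$-space $Y$ carries its unique (totally bounded) equiuniformity, which restricts on $X$ to a totally bounded equiuniformity $\mu$ with $\mu \subseteq \U^G$ and $\delta_\mu = \delta_{\U^G}$. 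By the maximality clause in Lemma \ref{q-->b}(2), $\mu \subseteq \U^G$; but two compatible totally bounded uniformities inducing the same proximity coincide with $\U^G$, forcing $\U^G$ to be totally bounded, i.e.\ the action is $\U$-massive.

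For (2) $\Rightarrow$ (1) under the metrizability hypotheses on $\U$ and on $G$, I would use Lemma \ref{q-->b}(4): if $\U$ and $G$ are both metrizable, then $\U^G$ is metrizable as well. Combining this with the assumption that $\U^G$ is totally bounded, we get a totally bounded \emph{metrizable} uniformity, whose completion is a compact metrizable space. Since $\U^G$ is an equiuniformity, this completion is the greatest $\U$-uniform $G$-compactification by Theorem \ref{t:G-proxOfQuasib}, and it is metrizable because a totally bounded metrizable uniformity has a countable base and hence yields a second countable compactification.

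The main obstacle I anticipate is the converse direction of the equivalence ``metrizable compactification $\Leftrightarrow$ totally bounded uniformity has countable base'', and in particular verifying that metrizability of the compactification forces countable generation of $\U^G$ rather than merely of its totally bounded reflection. The clean route is to lean on the uniqueness of the totally bounded equiuniformity inducing a given proximity, so that no genuine loss occurs in passing between $\U^G$ and the proximity $\delta_{\U^G}$; the metrizability of $G$ is needed precisely to push metrizability of the compactification back down to $\U^G$ via Lemma \ref{q-->b}(4), and I expect the careful bookkeeping of these reductions to be where the real work lies.
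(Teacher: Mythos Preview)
Your argument for $(2)\Rightarrow(1)$ is correct and is exactly what the paper does: Lemma~\ref{q-->b}(4) gives that $\U^G$ is metrizable, total boundedness then makes its completion a metrizable compactum, and Theorem~\ref{t:G-proxOfQuasib} identifies this completion with the greatest $\U$-uniform $G$-compactification.

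Your argument for $(1)\Rightarrow(2)$, however, has a genuine gap. Notice that nowhere in it do you actually use the metrizability of the compactification: you produce the totally bounded equiuniformity $\mu$ on $X$ coming from the compactification, observe $\delta_\mu=\delta_{\U^G}$ and $\mu\subseteq\U^G$, and then try to conclude $\mu=\U^G$. But the step ``two compatible totally bounded uniformities inducing the same proximity coincide, forcing $\U^G$ to be totally bounded'' is circular --- the uniqueness statement applies only among \emph{totally bounded} uniformities, and you do not yet know $\U^G$ is totally bounded. Since the same chain of reasoning would run for \emph{any} $(X,\U)\in\rm{Unif}^G$ (metrizable compactification or not), it would prove that $\U^G$ is always totally bounded, which is false. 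Your final paragraph even flags this danger, but the proposed fix via ``uniqueness of the totally bounded equiuniformity'' is exactly the circular step, and invoking metrizability of $G$ there is not available: $(1)\Rightarrow(2)$ is claimed without any hypothesis on $G$.

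The paper's route for $(1)\Rightarrow(2)$ avoids this entirely by arguing the contrapositive. If $\U^G$ is not totally bounded, one finds an infinite $\U^G$-uniformly discrete sequence in $X$; its closure in the Samuel compactification of $\U^G$ is then a copy of $\beta\N$, so the compactification cannot be metrizable. This is where the metrizability hypothesis actually bites, and it is the missing idea in your proposal.
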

\begin{proof}  
	(1) $\Rightarrow$ (2): 
	By Theorem \ref{t:G-proxOfQuasib}, the greatest $\U$-uniform $G$-compactification of $X$ is the Smirnov compactification of the proximity 
	$\delta_{\U^G}$ (which is the same as the Samuel compactification of $\U^G$). 
	
	 Assume the contrary that $\U$ is not $G$-massive. Then by Definition \ref{d:massive}, $\U^G$ is not totally bounded. Equivalently, $X$ contains an infinite sequence 
	 which is $\U^G$-uniformly discrete. This implies that the corresponding Samuel compactification of $\U^G$ is not metrizable.

%
	
	(2) $\Rightarrow$ (1): Since $\U$ and $G$ are metrizable, then $\U^G$ is also metrizable. By (2), $\U^G$ is totally precompact. Then its completion is metrizable. On the other hand, this completion is the greatest $\U$-uniform $G$-compactification by Theorem \ref{t:G-proxOfQuasib}. 
\end{proof}

\sk 
Many naturally defined uniform structures are $G$-massive as it follows from the examples of Remark \ref{r:smallGcomp} making use of Proposition \ref{p:massive}.

An extreme (but useful) sufficient condition is the case of the discrete uniform space $(X,\U_{discr})$. 
%
Let us say that the action is \textit{strongly $G$-massive} if 
for every finite subset $F \subset X$ the stabilizer subgroup action $St_F \times X \to X$ has finitely many orbits.  

%
%

\begin{examples} \label{ex:massive} 
	Here we give some examples of strongly $G$-massive actions. 
	\begin{itemize}
	\item [(a)] $S_{\infty} \times \N \to \N$.  In this case $\beta_G X$ 
	(of the discrete space $X:=\N$ with the action of the Polish symmetric group $G:=S_{\infty}$) is the Alexandrov compactification $\N \cup \{\infty\}$. 
	\item [(b)] $X=(\Q,\leq)$ the rationals with the usual order but equipped with the discrete topology. Consider the automorphism group $G=\Aut(\Q,\leq)$ with the pointwise topology. In this case the action of $G$ on the discrete uniform space $(X,\U_{discr})$ is $G$-massive. Hence, $\beta_G X$ is  metrizable by Proposition \ref{p:massive}. In fact, one may show that $X \to \beta_G X$  is a proper $G$-compactification such that $\beta_G X$ is a linearly ordered $G$-space. 
	By Corollary \ref{c:DenseSubgr}, the same is true for every dense subgroup $G$ of $\Aut(\Q,\leq)$ (for instance, Thompson's group $F$). 
	\sk 
	\textbf{Sketch:} We use an idea of \cite{Me-ord75}.    
	Let $F:=\{t_1 < t_2 < \cdots < t_m\}$
	be a finite chain in $\Q$. Using the ultrahomogeneity of the action 
	$\Aut(\Q,\leq) \times (\Q,\leq) \to (\Q,\leq)$, the corresponding (finite) orbit of the stabilizer subgroup $St_F$ is 
	$$
	X_F:=\{(-\infty,t_1), t_1, (t_1,t_2), t_2, (t_2,t_3), \cdots, (t_{m-1}, t_m), , t_m, (t_m,\infty)\}. 
	$$ 
	Therefore, the present action is strongly $G$-massive and in particular 
	$\U_{discr}$-massive  ($\U_{discr}^G$ is totally bounded).  
	The proximity of the uniformity $\U_{discr}^G$ corresponds to 
	$\beta_G X$. On the other hand, the completion of $\U_{discr}^G$ can be realized as a certain inverse limit $X_{\infty}=\underleftarrow{\lim} (X_F, I)$ of finite linearly ordered sets $X_F$, where $F \in I$ and 
	the finite orbit space $X_F:=X / St_F$ carries the natural linear order.  
	

	\item [(c)]
	A similar result is valid for the circular version of (b). Namely, for the rationals on the circle with its circular order
	$X=(\Q / \Z,\circ)$, the automorphism group $G=\Aut(\Q / \Z,\circ)$ and its dense subgroups $G$ (for instance, Thompson's circular group $T$).
	In this case $\beta_G X \setminus X$ is the universal minimal $G$-space $M(G)$. 
	\end{itemize} 
\end{examples}

\sk 
\section{Some open questions}

The following questions are still open. 

\begin{question} \emph{(S. Antonyan--M. Megrelishvili)}  
	Is it true that $\dim \beta_G G=\dim G$ for every locally compact group $G$? 
	What of $G$ is a Lie group? 
\end{question}

\begin{question} \emph{(Yu.M. Smirnov (see \cite{Me-opit07} and \cite{AAS}))} 
	Let $G=\Q$ be the topological group of all rationals. Is it true that there exists a Tychonoff $G$-space which is not $G$-Tychonof ?
\end{question}

\begin{question} \emph{(H. Furstenberg and T. Scarr (see \cite{Me-opit07,Pest-Smirnov}))} 
	Let $G \times X \to X$ be a continuous action with one orbit (that is, algebraically transitive) of a (metrizable) topological group $G$ on a (metrizable) space $X$. Is it true that $X$ is  $G$-Tychonoff? 
\end{question}



\bibliographystyle{amsplain}

\end{document}